\begin{document}

\author{Jean Bourgain}
\thanks{Bourgain is partially supported by NSF grant DMS-0808042.}
\email{bourgain@ias.edu}
\address{IAS, Princeton, NJ}
\author{Alex Kontorovich}
\thanks{Kontorovich is partially supported by  NSF grants DMS-0802998 and DMS-0635607, and the Ellentuck Fund at IAS}
\email{avk@ias.edu}
\address{IAS and Brown, Princeton, NJ}
\author{Peter Sarnak}
\thanks{Sarnak is partially supported by NSF grant DMS-0758299.}
\email{sarnak@math.princeton.edu}
\address{IAS and Princeton, Princeton, NJ}

\title{Sector Estimates for Hyperbolic Isometries}

%\vspace*{2\baselineskip}
%(\today\ version) \\

\begin{abstract}
We prove various orbital counting statements for Fuchsian groups of the second kind. These are of independent interest, and also are used in the companion paper \cite{BourgainKontorovich2009} to produce primes in the Affine Linear Sieve.
\end{abstract}
\date{\today}
\maketitle
\tableofcontents

\section{Introduction}

Let $\G<G=\PSL(2,\R)$ be a non-elementary,
%\footnote{ Recall that $\G$ is elementary if it has at most two limit points.} 
geometrically finite group of isometries of the upper half plane $\bH$.
In the case when $\G$ has %is a Fuchsian group of the first kind, that is, having 
fundamental domain with finite hyperbolic area, much effort has gone
into understanding the asymptotic behavior of the number of points in a $\G$-orbit which lie in an expanding region inside $G$, e.g. 
\cite{
Selberg1965, 
LaxPhillips1982, 
GoodBook, 
DukeRudnickSarnak1993, EskinMcMullen1993, 
Kontorovich2009, KontorovichOh2009}.  
Such regions can be decomposed into 
their harmonics, and 
hence one can recover many counting statements from, say, taking the Cartan decomposition $G=KA^{+}K$ and studying the set of $\g\in\G$ in a ball of expanding radius with a given harmonic on right and left $K$-types.
Here 
$$
K=\SO(2)
=
\left\{
k_{\gt}=
%\exp(\gt\ Y)
\mattwo{\cos\gt}{\sin\gt}{-\sin\gt}{\cos\gt}
:\gt\in[0,2\pi)
\right\}
$$
 and 
 $$
 A^{+}=
 \left\{
 a_{t}=
 %\exp(\foh t\ h)
 \mattwo{e^{-t/2}}00{e^{t/2}}
 ,t\ge0
 \right\}
.
 $$ 
%where $h=\mattwo100{-1}$ and $Y=\mattwo01{-1}0$.
%\\
\

The state of the art in this direction in the finite-volume case is due to Good \cite{GoodBook}. Let $0\le \gt_{1}(g),\gt_{2}(g)<\pi$ and $t(g)>0$   be the functions in the Cartan decomposition of $G$, so that $g=\pm k_{\gt_{1}(g)}\,a_{t(g)}\,k_{\gt_{2}(g)}$.%
%%%%%%%%%%%%%%%%%%%%%%%%%%%%%%%%%%%%%
\footnote{
These are determined as follows. Assume that $g%=k_{\gt_{1}}\, a_{t}\, k_{\gt_{2}}
\notin K$.
Define $t(g)$ as the solution to $\|g\|^{2}=e^{t}+e^{-t}=2\cosh t$ with $t>0$.
Let $\gt_{1}(g)$ be defined as $\foh\arg\left({g\cdot i - i \over g\cdot i +i}\right)$ (that is, map $g$  to the unit disk $\bD$;
the argument of the image determines $\gt_{1}$).  Then
$a_{t(g)}^{-1}k_{\gt_{1}(g)}^{-1}g$ is in $K$, whence
 $\gt_{2}$ is %uniquely 
determined.
}
%%%%%%%%%%%%%%%%%%%%%%%%%%%%%%%%%%%
\begin{thm}[\cite{GoodBook}, Thm 4]
Let $\G<G$ be a lattice %Fuchsian group of the first kind 
and 
fix integers $n$ and $k$.
%Let $\bo_{n}$ be the Dirac delta symbol indicating whether $n=0$, and similarly for $k$.
Let 
$$
0=\gl_{0}<\gl_{1}\le\cdots\le \gl_{N}<1/4
$$ 
be the eigenvalues of the Laplacian on $\G\bk\bH$ below $1/4$. Write 
\be\label{sjDef}
\gl_{j}=s_{j}(1-s_{j})
\ee
with $s_{j}>1/2$. Then
there are constants $c_{0}%(n,k)
,\dots,c_{N}%(n,k)
\in\C$ depending on $n$ and $k$ such that
\be\label{Goods}
\sum_{\g\in\G\atop\|\g\|<T}
e^{2in\,\gt_{1}(\g)}
\ 
e^{2ik\,\gt_{2}(\g)}
=
\bo_{n=0}\,\bo_{k=0} \, c_{0}\, T^{2}
+
\sum_{j=1}^{N}
c_{j}(n,k)T^{2s_{j}}
+
O_{n,k}(T^{4/3})
,
\footnote{
The exponent $4/3$ has never been improved, even for any specific $\G$.
}
\ee
as $T\to\infty$. 
\end{thm}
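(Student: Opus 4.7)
The classical strategy applies: smooth the indicator, form an automorphic kernel, spectrally decompose, and optimize a smoothing parameter. Fix a small $\eta>0$ and replace $\bo_{\|g\|<T}$ by a smooth radial cutoff $\chi^{\eta}$ equal to $1$ for $\|g\|<T(1-\eta)$, vanishing for $\|g\|>T(1+\eta)$, and satisfying $\|(\chi^{\eta})^{(j)}\|_{\infty}\ll(\eta T)^{-j}$. The difference between the sharp and smoothed sums is controlled by counting $\g\in\G$ whose Cartan radial coordinate lies in an annulus of thickness $\eta T$; by a standard volume estimate followed by an a posteriori bootstrap this is $O(\eta T^{2})$.

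Set
$$F(g)=\chi^{\eta}(\|g\|)\,e^{2in\,\gt_{1}(g)}\,e^{2ik\,\gt_{2}(g)},$$
which transforms by $e^{2in\cdot}$ and $e^{2ik\cdot}$ under the left and right $K$-actions, and form the automorphic kernel $\mathcal{K}(x,y)=\sum_{\g\in\G}F(x^{-1}\g y)$, noting that $\mathcal{K}(e,e)$ equals the smoothed sum. Spectrally expanding $\mathcal{K}$ via the Plancherel decomposition of $L^{2}(\G\bk G)$ with the given $K\times K$ isotypes — equivalently, inserting the Harish-Chandra/Helgason transform of $F$ into the pre-trace formula with non-spherical $K$-types — the trivial representation contributes the main term $\bo_{n=0}\bo_{k=0}\,c_{0}\,T^{2}$, while each exceptional eigenvalue $\gl_{j}=s_{j}(1-s_{j})<1/4$ contributes a term $c_{j}(n,k)\,T^{2s_{j}}$, with coefficient equal (up to normalization) to the $K$-equivariant matrix coefficient of the associated Maass form paired against $F$; this is an explicit hypergeometric integral in $n$, $k$, $s_{j}$.

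The residual spectrum (tempered discrete plus continuous, via Eisenstein series) gives the error term. For each spectral parameter $s=\foh+ir$ on the unitary axis, the corresponding $K$-equivariant spherical function decays like $e^{-t/2}$ in the Cartan coordinate, so pairing with $\chi^{\eta}$ contributes at most $T$ times a mild polynomial in $1+|r|$. Summing with Weyl's law and the Plancherel measure, and using the smoothness of $F$ to truncate the spectral parameter at height $\sim\eta^{-1}$, yields a spectral error of the shape $O(T\,\eta^{-1/2})$.

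Balancing $\eta T^{2}=T\eta^{-1/2}$ forces $\eta=T^{-2/3}$ and produces the claimed $O(T^{4/3})$ remainder. The principal obstacle is the spectral bound in the previous paragraph: obtaining exactly an $\eta^{-1/2}$ loss requires combining mean-value/large-sieve estimates on automorphic spectral coefficients with the sharpest pointwise Harish-Chandra bounds on $K$-equivariant spherical functions, exploiting that $F$ is effectively smooth in only the single radial variable. It is precisely at this step that the still-unimproved $4/3$ exponent enters — any refinement would demand either subconvexity for the relevant automorphic $L$-functions or nontrivial cancellation across distinct eigenvalues.
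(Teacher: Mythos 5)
Note first that the paper does not prove this statement: it is quoted as background from Good (Theorem 4 of \cite{GoodBook}), and the body of the paper only proves its infinite-area analogue (Theorem \ref{thm:main}). Measured against Good's classical argument, your outline is essentially the standard one and is sound in structure: smooth the radial cutoff at scale $\eta$, form the automorphic kernel in the fixed $(2n,2k)$ bi-$K$-type, expand over the discrete spectrum and Eisenstein series, keep $c_{0}T^{2}$ only for $n=k=0$ and $c_{j}(n,k)T^{2s_{j}}$ from the exceptional eigenvalues, and balance $\eta T^{2}$ against a tempered contribution $T\eta^{-1/2}$ to get $\eta=T^{-2/3}$ and the $O(T^{4/3})$ remainder (Good's extra labor is mainly in making the $n,k$-dependence explicit, which you do not need since $n,k$ are fixed). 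The one step you state too vaguely is precisely the one that produces $\eta^{-1/2}$: ``at most $T$ times a mild polynomial in $1+|r|$'' per tempered parameter would, after Weyl's law, give only $T\eta^{-2}$ and hence the exponent $5/3$, not $4/3$. What is actually needed is the decay $\asymp T|r|^{-3/2}$ of the (generalized, $(2n,2k)$-type) Harish-Chandra transform of the ball cutoff for $1\le |r|\le \eta^{-1}$, combined with the local Weyl law (mean-value bound of order $R^{2}$ for $\sum_{|r_{j}|\le R}$ of the squared values of the relevant weight-raised eigenfunctions and the Eisenstein analogue). These are classical inputs; no large sieve or subconvexity enters at this stage --- such refinements are only relevant to improving $4/3$, as your last sentence correctly suggests. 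The annulus count $O(\eta T^{2})$ for the sharp-versus-smooth comparison is fine, since an a priori $O(T^{2})$ bound suffices to bootstrap once $\eta\ge T^{-2/3}$.

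It is worth contrasting this with the route the authors take for their own Theorem \ref{thm:main}, since in infinite volume none of your main tools (pre-trace formula, Weyl law, Plancherel for $L^{2}(\G\bk\bH)$) is available: they smooth in both copies of $\G\bk G$, reduce the smoothed count to a single matrix coefficient $\<\pi(g).\psi_{-2k},\psi_{-2n}\>$, split off the finitely many complementary-series constituents explicitly (via the functions $\Phi_{2n,2k}$ and the Patterson--Sullivan coefficients), and bound the tempered remainder only through the Howe--Moore/Cowling decay $t\,e^{-t/2}$ of matrix coefficients. That softer argument works without any spectral expansion and identifies the leading constant in terms of $\hat\mu$, but it pays with a weaker error term ($T^{\frac14+\frac32\gd}$ up to logarithms, cf.\ Remark \ref{rmkLoss1}) than the $4/3$-type exponent your finite-volume argument achieves.
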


Our main goal 
is to give a version of the above in the case when $\G$ is 
%a Fuchsian group of the second kind, that is, one 
again non-elementary and geometrically finite, but
whose fundamental domain has infinite area. First we recall the spectral theory in this context.
Given any fixed point $z\in\bH$, the orbit $\G z$ accumulates only on the boundary $\dd\bH\cong\hat\R$.
The set $\gL$ of accumulation points is called the limit set of $\G$, and is a Cantor-like set having some Hausdorff dimension 
$$
0<\gd<1
,
$$ 
which is called the critical exponent of $\G$.
 The spectrum above $1/4$ is purely continuous and there are finitely many discrete eigenvalues below $1/4$ \cite{Patterson1975%, LaxPhillips1982
 }. In fact, the spectrum contains no point eigenvalues at all unless $\gd>1/2$, in which case the base eigenvalue is \cite{Patterson1976}
$$
\gl_{0}=\gd(1-\gd)
.
$$

Corresponding to $\gl_{0}$ is the base eigenfunction $\varphi_{0}$, which can be realized explicitly as the integral of a Poisson kernel against the so-called Patterson-Sullivan measure $\mu$ \cite{Patterson1976,Sullivan1984}, supported on the limit set $\gL\subset\dd\bH$. 
Roughly speaking, $\mu$ is the weak$^{*}$ limit as $s\to\gd^{+}$ of the measures
\be\label{muPS}
\mu_{s}(x):=
{
\sum_{\g\in\G}\exp({-s\,d(\fo,\g\cdot\fo)){\bf 1}_{x=\g\fo}}
\over
\sum_{\g\in\G}\exp({-s\,d(\fo,\g\cdot\fo))}
}
.
\ee
Here $d(\cdot,\cdot)$ is the hyperbolic distance, and $\fo$ is the origin (or any base point) in $\bD$. 

Let $\hat\mu$ denote the %usual
 Fourier coefficients %transform 
of the measure $\mu$. Our first %main 
result is

\begin{thm}\label{thm:main}
Let $\G$ be a Fuchsian group of the second kind with critical exponent $\gd>1/2$.
Let 
\be\label{glLab}
0<\gd(1-\gd)=\gl_{0}<\gl_{1}\le\cdots\le \gl_{N}<1/4
\ee 
be the eigenvalues of the Laplacian on $\G\bk\bH$ below $1/4$, and 
%write 
%$$
%\gl_{j}=s_{j}(1-s_{j})
%$$ 
%with $s_{j}>1/2$. 
use the notation \eqref{sjDef}.
Then 
for integers $n$ and $k$,
there are constants $c_{1},\dots,c_{N}\in\C$ depending on $n$ and $k$ such that 
\beann
\sum_{\g\in\G\atop\|\g\|<T}
e^{2in\,\gt_{1}(\g)}
\ 
e^{2ik\,\gt_{2}(\g)}
&=&
\hat\mu(2n)
\,
\hat\mu(2k)\
\pi^{1/2}
{\G(\gd-1/2)
\over
\G(\gd+1)}\
 T^{2\gd}
+
\sum_{j=1}^{N}
c_{j}(n,k)T^{2s_{j}}
 \\
 &&
 \quad
+
O\bigg(T^{1\cdot\frac14+2\gd\cdot\frac34}(\log T)^{1/4} (1+|n|+|k|)^{3/4}\bigg)
,
\eeann
as $T\to\infty$. Here $|c_{j}(n,k)|\ll |c_{j}(0,0)|$, as $n$ and $k$ vary, and the implied constants depend only on $\G$.
\end{thm}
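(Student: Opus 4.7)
The plan is to follow the spectral-geometric recipe that underlies Good's theorem, suitably adapted to infinite volume by invoking the Lax--Phillips decomposition of $L^2(\G\bk\bH)$ together with Patterson--Sullivan theory.

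First, I would encode the sum as the value of an automorphic Poincar\'e series whose kernel carries both $K$-types. Writing $g=k_{\gt_1(g)}a_{t(g)}k_{\gt_2(g)}$ in Cartan coordinates, the characters $e^{2in\gt_1}\,e^{2ik\gt_2}$ combined with the radial cutoff $\mathbf{1}_{\|g\|<T}$ produce a bi-$K$-equivariant test function $F_{T,n,k}$ on $G$. The left-hand side of the theorem is $\sum_{\g\in\G}F_{T,n,k}(\g)$, which I spectrally expand on $L^2(\G\bk\bH)$: the discrete eigenvalues in $[0,1/4)$ furnish the main and exceptional terms, while the tempered spectrum above $1/4$ furnishes the error.

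Second, the base eigenfunction $\varphi_0$ is realized as a Poisson integral $\varphi_0(z)=\int P(z,\xi)^{\gd}\,d\mu(\xi)$, so that its projection onto the $n$-th $K$-type is precisely $\hat\mu(2n)$ times a classical Legendre function at spectral parameter $s_0=\gd$ and order $2n$. Pairing $F_{T,n,k}$ against this projection on each side and evaluating the resulting radial integral in $t$ by a beta-function computation produces the constant $\pi^{1/2}\,\G(\gd-1/2)/\G(\gd+1)$ times $T^{2\gd}$. The same mechanism applied to the finitely many exceptional eigenvalues $\gl_j<1/4$ yields the terms $c_j(n,k)\,T^{2s_j}$; the estimate $|c_j(n,k)|\ll|c_j(0,0)|$ follows from uniform boundedness of the Legendre matrix coefficient at a fixed exceptional spectral parameter.

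Third, to control the tempered remainder, I would smooth the cutoff $\mathbf{1}_{\|g\|<T}$ at some scale $\eta$, spectrally expand the smoothed sum, and then remove the smoothing. Smoothing contributes an annular point-count error of order $\eta\,T^{2\gd-1}$ once the main term is bootstrapped, while Plancherel on the line $\mathrm{Re}\,s=1/2$ together with Sobolev-type control on generalized eigenfunction projections bounds the continuous-spectrum contribution to the smoothed sum by a quantity of the shape $T\cdot\eta^{-1/2}(\log T)^{1/2}$ with polynomial dependence on $(1+|n|+|k|)$. Optimizing $\eta$ between these two sources of error yields the exponent $1/4+3\gd/2$, the $(\log T)^{1/4}$ loss, and the $(1+|n|+|k|)^{3/4}$ factor.

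The chief obstacle I anticipate is uniformity in the $K$-types. Pushing the tempered estimate through uniformly in $n$ and $k$ requires bounds on Legendre functions of large order $n$ at spectral parameter $1/2+ir$, together with Sobolev-type estimates on spectral projections of Lax--Phillips Eisenstein-like generalized eigenfunctions; unlike in the lattice setting, no absolutely convergent trace formula is available to absorb these. Controlling the exceptional coefficients $c_j(n,k)$ by $|c_j(0,0)|$, while softer, rests on the same matrix-coefficient estimates.
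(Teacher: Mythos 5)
Your treatment of the discrete spectrum is in the same spirit as the paper's: the base eigenfunction as a Poisson integral against the Patterson--Sullivan measure, projection onto $K$-types producing $\hat\mu(2n)$ times an explicit special function (the paper uses the hypergeometric functions $\Phi_{2n,2k}$ and ladder operators rather than Legendre functions, but that is cosmetic), and a radial integral giving $\pi^{1/2}\Gamma(\delta-1/2)/\Gamma(\delta+1)\,T^{2\delta}$. The genuine gap is in your treatment of the remainder. Your plan rests on ``Plancherel on the line $\mathrm{Re}\,s=1/2$ together with Sobolev-type control on generalized eigenfunction projections,'' i.e.\ on an explicit expansion of the continuous spectrum. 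For a geometrically finite group of infinite covolume no such usable expansion is available off the shelf --- there is no Eisenstein-series Plancherel formula with the uniform bounds you would need --- and you yourself flag this as an unresolved obstacle rather than overcoming it. This is exactly the point where the paper takes a different route: it never expands the non-discrete spectrum at all. Instead it smooths the counting function in \emph{two} copies of $\Gamma\backslash G$, rewrites the smoothed count as a single matrix coefficient $\int_G f_T(g)\langle\pi(g)\psi,\psi\rangle\,dg$ of the right-regular representation, strips the extraneous $K$-harmonics of $\psi$ (so that only $L^2$-norms, not Sobolev norms, of the bump enter), and bounds the tempered contribution solely by the Howe--Moore/Cowling decay $|\langle\pi(k_{\theta_1}a_tk_{\theta_2})v,w\rangle|\ll t e^{-t/2}\|v\|_2\|w\|_2$, yielding an error $\ll\eta^{-3}T\log T$. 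Without either this device or a proved substitute for your Plancherel-plus-Sobolev input, your argument does not close.

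There is also a quantitative problem with your bookkeeping even if one grants your continuous-spectrum bound. Balancing your two claimed errors, $\eta T^{2\delta-1}$ against $T\eta^{-1/2}(\log T)^{1/2}$, does not produce the exponent $\tfrac14+2\delta\cdot\tfrac34$, nor the factors $(\log T)^{1/4}$ and $(1+|n|+|k|)^{3/4}$. In the paper the smoothing is at scale $\eta$ in the group, so the unsmoothing cost is the count in a multiplicative annulus of width $\asymp\eta T$, namely $\eta(1+|n|+|k|)T^{2\delta}$ by Lax--Phillips (not $\eta T^{2\delta-1}$), and the tempered cost is $\eta^{-3}T\log T$ because $\|\psi\|_2^2\ll\eta^{-3}$ for a bump at scale $\eta$ in the $3$-dimensional group $G$; optimizing $\eta=T^{(1-2\delta)/4}(\log T)^{1/4}(1+|n|+|k|)^{-1/4}$ then gives exactly the stated error term. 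You would need to redo this balance honestly in your framework, and as it stands the claimed error term does not follow from the estimates you assert.
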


\begin{rmk}\label{rmkLoss1}
{\rm
We make no attempt to obtain a best-possible error term; the above
 can surely be improved with some effort.
 The natural remainder term  here would be the one which corresponds to Lax-Phillips \cite{LaxPhillips1982} when $n=k=0$, namely $T^{1\cdot\frac23+2\gd\cdot\frac13}$, ignoring logs.
%Note that the error term above, having the power essentially $1\cdot\frac14+2\gd\cdot\frac34$, interpolates with weights $(1/4,3/4)$ between the continuous spectrum corresponding to $1$ and the main term of $2\gd$.
%In Good's estimate \eqref{Goods}, $\gd=1$, and the weights are improved to $(2/3,1/3)$, matching quantitatively the errors obtained in similar counting problems by  Selberg (unpublished) and Lax-Phillips \cite{LaxPhillips1982}. See Remark \ref{rmkLoss2}.
}
\end{rmk}

\begin{rmk}
{\rm
It is crucial for our intended applications below and in \cite{BourgainKontorovich2009} (for reasons of positivity) that the leading order term be recognized in terms of the Patterson-Sullivan measure; this is why we made the constant $c_{0}(n,k)$ completely explicit. %, and that the  $n$ and $k$ dependence appears only there.
}
\end{rmk}

\begin{rmk}
{\rm
In the absence of an explicit spectral expansion into Maass forms and Eisenstein series, we control the above error term using representation theory, smoothing the counting function in two copies of $\G\bk G$ and appealing to the decay of matrix coefficients \cite{HoweMoore1979,Cowling1978}. This technique dates at least as far back as \cite{DukeRudnickSarnak1993}.
}
\end{rmk}

In the intended applications in the companion paper \cite{BourgainKontorovich2009}, one requires the above with uniform control on cosets of congruence subgroups. Recall the spectral gap property in the infinite volume situation. Assume $\G<\SL(2,\Z)$. 
There is a fixed integer  $\fB\ge1$ called the {\it ramification number} which depends only on $\G$ and needs to be avoided. Let $q\ge1$ with 
$$
q=q'q'',\qquad\qquad\text{ and }\qquad\qquad q'\mid \fB
.
$$
Let $\G(q)$ denote a ``congruence'' subgroup of $\G$ of level $q$, that is, a group which contains the set
$$
\{\g\in\G:\g\equiv I(q)\}.
$$
The inclusion of vector spaces
$$
L^{2}(\G\bk\bH)
\subset
L^{2}(\G(q)\bk\bH)
$$
induces the same inclusion on the spectrum:
$$
\Spec(\G\bk\bH)
\subset
\Spec(\G(q)\bk\bH)
.
$$

\begin{Def}

{\rm
The {\it new spectrum} 
$$
\Spec_{new}(\G(q)\bk\bH)
$$ 
at level $q$ is defined to be the set of eigenvalues 
below
$
1/4
$
which are in
$
\Spec(\G(q)\bk\bH)
$
but  not in
$
\Spec(\G\bk\bH)
.
$
}
\end{Def}

\begin{Def}\label{fBdef}
{\rm
We will call %the pair 
$%(
\gT%,\fB)
$ a {\it spectral gap} for $\G$
 if  $\gT$ is in the interval $1/2<\gT<\gd$ 
%and $\fB\ge1$ is a (bad) integer
%  such that for any  
%$$
%q=q'q''
%\quad
%\text{ with }
%\quad
%q'\mid\fB
%%\text{ and }(q'',\fB)=1
%,
%$$ 
%we have 
and
$$
\Spec(\G(q)\bk\boldH)_{new}
\cap
(0,\gT(1-\gT)) 
\quad
\subset 
\quad
\Spec(\G(q')\bk\boldH)_{new}.
$$
}
\end{Def}

That is, the eigenvalues below $\gT(1-\gT)$ which are new for $\G(q)$ are coming from the ``bad'' part $q'$ of $q$. As the ramification number $\fB$ is a fixed integer depending only on $\G$, there are only finitely many possibilities for its divisors $q'$.

Infinite volume spectral gaps are known 
\cite{Gamburd2002,BourgainGamburd2007,BourgainGamburdSarnak2009} 
for prime and square-free $q$. The method in \cite{Gamburd2002} applies also for arbitrary composite $q$, 
and in particular
we have:

\begin{theorem}[\cite{Gamburd2002}]\label{BGSspec}
Let $\G$ be a Fuchsian group of the second kind with $\gd>5/6$. Then there exists some ramification number $\fB$ depending on $\G$ such that %$(\gT,\fB)$ 
$\gT=5/6$
is a spectral gap for $\G$.
\end{theorem}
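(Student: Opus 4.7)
The plan is to follow Gamburd's strategy for prime $q$ and extend it to composite $q$ via uniform expansion of the associated Cayley graphs. I would show that any new eigenvalue $\lambda=s(1-s)$ of $\G(q)$ with $s>5/6$ that is \emph{not} already new for $\G(q')$ leads to a contradiction. Such a $\lambda$ corresponds to an eigenfunction $\varphi$ in the orthogonal complement of $L^{2}(\G(q')\bk\bH)$ inside $L^{2}(\G(q)\bk\bH)$, equivalently, in the isotypic components of $L^{2}(\G(q)\bk\bH)$ transforming under non-trivial characters of $\G(q')/\G(q)\hookrightarrow\SL(2,\Z/q''\Z)$.

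First, I would appeal to Patterson--Sullivan theory to realize $\varphi$ via a vector-valued $s$-conformal density $\nu_{\varphi}$ on the limit set $\gL\subset\dd\bH$. Because $\varphi$ lives in non-trivial isotypic components, $\nu_{\varphi}$ takes values orthogonal to the scalar direction in the regular representation of $\G(q')/\G(q)$. I would then estimate matrix coefficients $\langle\pi(\g)\nu_{\varphi},\nu_{\varphi}\rangle$ for $\g\in\G$ in two ways: the spectral/mixing computation for the geodesic flow gives decay of order $e^{-(1-s)t}$ where $t=d(\fo,\g\fo)$; on the combinatorial side, the non-trivial-isotypic condition together with expansion of the Cayley graph of $\G$ in $\SL(2,\Z/q''\Z)$ forces decay strictly faster than $e^{-(1-\gT)t}$. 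Comparing the two decay rates at a suitable scale $t\sim\log q''$ yields $s\le\gT=5/6$.

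The expansion input is the key ingredient: the Cayley graph of the image of $\G$ in $\SL(2,\Z/q''\Z)$ must be a uniform expander, once $q''$ is coprime to a fixed finite set $\fB$ of bad primes depending only on $\G$. For prime $q''$ this is Gamburd's original theorem; for composite $q''$ it follows by combining the prime case via a Chinese remainder decomposition with a quantitative $\ell^{2}$-flattening estimate for self-convolutions on $\SL(2,\Z/q''\Z)$. The ramification number $\fB$ is precisely the set of primes at which the underlying product theorem fails to give sufficient gain, and the specific value $\gT=5/6$ emerges from the explicit expansion constant.

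The main obstacle is the composite extension of the expander theorem: one needs a product/flattening estimate insensitive to the prime factorization of $q''$, so that the Chinese remainder splitting does not degrade the uniform gap. Once this combinatorial input is in hand, the Patterson--Sullivan / matrix-coefficient comparison carries over essentially verbatim from the prime case, and the orthogonality of $\nu_{\varphi}$ to the lifted Patterson--Sullivan measure from level $q'$ ensures that the argument genuinely isolates the $q''$-new part rather than recapturing eigenvalues already present at level $q'$.
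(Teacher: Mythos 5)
The paper does not prove this statement at all: it is quoted from \cite{Gamburd2002}, with the remark that Gamburd's method extends from prime to arbitrary composite $q$. That method is a Sarnak--Xue style multiplicity-versus-counting argument: a new eigenvalue at level $q''$ must occur with multiplicity $\gg q''$ because every nontrivial irreducible representation of $\SL(2,\Z/q''\Z)$ has large dimension (quasirandomness, which holds for all moduli, not just primes), while uniform upper bounds for the number of $\g\in\G(q)$ with $\|\g\|\le T$ cap the total spectral mass; balancing the two exponents is exactly what produces the threshold $\gT=5/6$ and is exactly where the hypothesis $\gd>5/6$ is consumed. No Cayley-graph expansion, product theorem, or $\ell^{2}$-flattening enters this argument, which is precisely why it was available for composite $q$ at the time.

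Your proposal takes a genuinely different route --- the Bourgain--Gamburd expansion machinery --- and as sketched it has real gaps. First, the attribution is off in a way that matters: uniform expansion of Cayley graphs of $\SL(2,\Z/p\Z)$ for prime $p$ is Bourgain--Gamburd (2008), not ``Gamburd's original theorem''; Gamburd (2002) is the spectral-gap statement itself, proved without expanders, so your reduction is circular as stated. Second, the composite-modulus flattening/product input you invoke (``insensitive to the prime factorization of $q''$'') was not available and is a substantial theorem in its own right (later Bourgain--Varj\'u); you cannot treat it as a black box obtained by ``combining the prime case via Chinese remainder''. Third, and most decisively, even granting expansion, the transfer from combinatorial expansion to an Archimedean eigenvalue gap yields an inexplicit $\gT$ depending on the expansion constant; your claim that ``the specific value $\gT=5/6$ emerges from the explicit expansion constant'' is unsupported, and indeed your sketch never uses the hypothesis $\gd>5/6$ anywhere --- a clear sign the argument cannot produce the stated theorem, since in the actual proof that hypothesis is essential for the counting--multiplicity comparison to close. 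The expansion route does prove spectral gaps for all $\gd>1/2$ (and in that sense is more general), but it proves a different, weaker-in-the-constant statement than the one quoted here.
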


We also require the following Sobolev-type norm. Fix $T\ge1$ and 
let $\{X_{1},X_{2},X_{3}\}$ be a basis for the Lie algebra $\fg$, cf. \eqref{hef}. 
Then
%
%for a function $f(\gt_{1},t,\gt_{2})$ in $KA^{+}K$ coordinates, % $f(\gt_{1},t,\gt_{2})$
define the  $\cS_{\infty,T}$ norm by
$$
\cS_{\infty,T}f=\max_{X\in\{0,X_{1},X_{2},X_{3}\}}\sup_{g\in G, \|g\|<T}|d\pi(X).f(g)|
,
$$
that is, the supremal value of first order derivatives of $f$ in a ball of radius $T$ in $G$.
%$$
%\| Df\|_{\infty,T}=\sup_{\gt_{1},t,\gt_{2}\atop t<2\log T}
%\left|
%{\dd f \over \dd \gt_{1}}
%\right|
%+
%\left|
%{\dd f \over \dd t}
%\right|
%+
%\left|
%{\dd f \over \dd \gt_{2}}
%\right|
%.
%$$
%The differentiation is with respect to the Riemannian metric, and the condition $t<2\log T$ captures $\|k_{\gt_{1}}a_{t}k_{\gt_{2}}\|<T$.

Equipped with a spectral gap, we prove the following uniform counting statement.

\begin{thm}\label{thm:unif}
Let $T\ge1$ and $f:G\to\C$ be a smooth function with $|f|\le1$. 
% and derivatives bounded by $\cS _{\infty,T} f$ for $\|g\|<{T}$. 
Let $\G$ be as above
% Fuchsian group of the second kind 
with $\gd>1/2$, ramification number $\fB$, and spectral gap $\gT$. Then for any $\g_{0}\in\G$ and  integer $q\ge1$ with $q=q'q''$ and $q'|\fB$,
\beann
\sum_{\g\in \g_{0}\cdot\G(q)\atop\|\g\|<T} f(\g)
&=&
{1\over [\G:\G(q)]}
\left(
\sum_{\g\in \G\atop\|\g\|<T} f(\g)
+\cE_{q'}
\right)
\\
&&
+
O\bigg(
(1+\cS _{\infty,T} f)^{6/7}T^{\frac{6}7 \cdot 2\gd+\frac17\cdot 2\gT}
\bigg)
.
\eeann
Here $\cE_{q'}\ll T^{2\gd-\ga_{0}}$, with $\ga_{0}>0$, and all implied constants are independent of $q''$ and $\g_{0}$. 
%As $\fB$ is fixed, there are only finitely many possible values of $q'$.
\end{thm}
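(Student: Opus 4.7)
\smallskip
\noindent\textbf{Proof sketch.}
The strategy is the Duke--Rudnick--Sarnak \cite{DukeRudnickSarnak1993} smoothing and unfolding procedure, adapted to the infinite-volume setting using the Patterson--Sullivan base eigenfunction and the uniform spectral gap of Theorem \ref{BGSspec}. Fix a scale $\epsilon>0$ and let $\eta_\epsilon$ be a non-negative bump on $G$ supported in the $\epsilon$-ball about the identity, with $\int\eta_\epsilon=1$ and $\ell$-th order derivatives of size $O(\epsilon^{-\ell-3})$. Denote by $S$ the coset sum to be computed and by $S_\epsilon$ its smoothed version, in which each lattice element $\g$ is replaced by $\g g$ averaged against $\eta_\epsilon(g)\,dg$. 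The discrepancy $|S-S_\epsilon|$ is controlled by $(1+\cS_{\infty,T}f)$ times the $\g_0\G(q)$-count in an $\epsilon$-thickening of $\{\|\cdot\|=T\}$; bounding this count by the full $\G$-count and using Theorem \ref{thm:main} gives $|S-S_\epsilon|\ll \epsilon(1+\cS_{\infty,T}f)\,T^{2\gd}$.

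Unfolding yields
\[
S_\epsilon \;=\; \int_G f(h)\,\mathbf{1}_{\|h\|<T}\,F_{\g_0,\epsilon}(h)\,dh,
\]
where $F_{\g_0,\epsilon}(h)=\sum_{\g\in\g_0\G(q)}\eta_\epsilon(\g^{-1}h)$ is a function on $\G(q)\bk G$. Decompose $F_{\g_0,\epsilon}$ spectrally on $L^2(\G(q)\bk G)$ into three pieces: (a) the projection onto the $\G$-invariant base eigenfunction $\varphi_0$; (b) the projections onto the remaining eigenfunctions of eigenvalue in $(0,\gT(1-\gT))$, which by Theorem \ref{BGSspec} all descend either from $\G$ itself or from the new spectrum at some level $q'\mid\fB$, and hence form a fixed finite collection; (c) the orthogonal complement, on which the regular representation of $G$ has exponent at most $\gT$. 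Integrating part (a) against $f\,\mathbf{1}_{\|\cdot\|<T}$ reproduces $\frac{1}{[\G:\G(q)]}\sum_{\g\in\G,\,\|\g\|<T}f(\g)$ up to the same smoothing loss: summing $F_{\alpha,\epsilon}$ over coset representatives $\alpha$ of $\G(q)\backslash\G$ reconstructs the $\G$-automorphic smoothed kernel, so the $\G$-invariant projection of $F_{\g_0,\epsilon}$ picks up exactly a factor $1/[\G:\G(q)]$. Integrating part (b) produces $\cE_{q'}$; since the exponents $s_j^{(q')}$ live in a fixed finite set strictly below $\gd$, this is $O(T^{2\gd-\ga_0})$ for some $\ga_0>0$. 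For part (c), express the integral as a matrix coefficient of $\eta_\epsilon$ in the regular representation of $\G(q)\bk G$ and invoke the quantitative decay of matrix coefficients of Cowling \cite{Cowling1978} and Howe--Moore \cite{HoweMoore1979}; combined with the gap $\gT$ and Sobolev embedding on the three-dimensional group $G$, this yields a bound of the form $\epsilon^{-C}T^{2\gT}$ for some absolute constant $C$.

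Finally, optimize $\epsilon$ by balancing the smoothing error $\epsilon(1+\cS_{\infty,T}f)T^{2\gd}$ against the spectral error $\epsilon^{-C}T^{2\gT}$. The total becomes $(1+\cS_{\infty,T}f)^{C/(C+1)}\,T^{(C\cdot 2\gd+2\gT)/(C+1)}$, and the effective value $C=6$, arising from controlling $\eta_\epsilon$ and its derivatives via Sobolev embedding in three dimensions, recovers the exponent $\tfrac{6}{7}\cdot 2\gd+\tfrac{1}{7}\cdot 2\gT$ in the statement. The main technical obstacle lies in step (c): the matrix coefficient decay must be uniform in the level $q''$ and in the coset representative $\g_0$. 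This uniformity is exactly the content of the uniform spectral gap hypothesis, and verifying that the smoothing-unfolding machinery preserves it --- so that none of the implied constants degrades with $q''$ --- is the delicate part of the argument.
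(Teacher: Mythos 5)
Your overall strategy (smooth the count, split the spectrum below $\gT(1-\gT)$ into the level-one old part, the bad-level $q'$ part, and the rest, then optimize the smoothing parameter) is the paper's strategy in outline, but there is a genuine gap at the crucial step (c). Your smoothing is one-sided: you form $S_\epsilon=\int_G f_T(h)\,F_{\g_0,\epsilon}(h)\,dh$ with a single automorphic object $F_{\g_0,\epsilon}$. With only one automorphic vector in play, the contribution of the orthogonal complement, $\int_G f_T(h)\,F^{\perp}(h)\,dh$, is \emph{not} a matrix coefficient --- it is a period of a single automorphic function against the non-automorphic cutoff $f_T$ over the ball $\|h\|<T$ --- so Cowling/Howe--Moore decay does not apply to it, and the obvious substitutes (Cauchy--Schwarz on the ball, or a pointwise spectral expansion of the kernel with one variable frozen) give nothing better than trivial bounds in infinite volume. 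The paper avoids this by smoothing in \emph{two} variables: it forms the kernel $\cF_{T,q}(g,h)=\sum_{\g\in\G(q)}f_T(g^{-1}\g h)$ and pairs it against $\Psi_{q,\g_0}\otimes\Psi_q$, so that $\cH_q(T)=\int_G f_T(g)\,\langle\pi(g).\Psi_q,\Psi_{q,\g_0}\rangle_{\G(q)\bk G}\,dg$; only then does the spectral-gap bound $|\langle\pi(a_t)v,w\rangle|\ll e^{-\gT t}(\|v\|\,\|w\|\,\cS v\,\cS w)^{1/2}$ yield the term $T^{2\gT}\eta^{-6}$ (the exponent $6$ coming from $\|\Psi_q\|\ll\eta^{-3/2}$ and $\cS\Psi_q\ll\eta^{-9/2}$ for a bump in the $3$-dimensional group), and hence the $6/7$--$1/7$ split after balancing against $\eta(1+\cS_{\infty,T}f)T^{2\gd}$. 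As written, your value $C=6$ is asserted rather than derived, and with one-sided smoothing there is no mechanism to derive it.

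A second, smaller inaccuracy: integrating the projection onto $\varphi_0$ against $f_T$ does not "reproduce" $\frac{1}{[\G:\G(q)]}\sum_{\g\in\G,\|\g\|<T}f(\g)$; the level-one sum also contains its own tempered and small-eigenvalue contributions. The paper handles this by proving the exact identity $W_q(T)=\frac{1}{[\G:\G(q)]}W_1(T)$ for the smoothed old-space piece (via the unfolding lemma $\langle\Psi_q,\vf^{(q)}_{2k}\rangle_{\G(q)\bk G}=[\G:\G(q)]^{-1/2}\langle\Psi_1,\vf^{(1)}_{2k}\rangle_{\G\bk G}$, which is also where uniformity in $\g_0$ comes from, since $\vf^{(1)}$ is $\G$-invariant), and then running the identical smoothing argument at level one to convert $W_1(T)$ back into $\cN_1(T)$ within the same error. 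Your "sum over coset representatives" remark gestures at the right mechanism, but you need this two-step comparison (level $q$ versus level $1$) rather than a direct evaluation of the main term; with that and the two-sided smoothing restored, your outline matches the paper's proof.
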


\

The application in the companion paper \cite{BourgainKontorovich2009} requires the following two estimates, which we derive as consequences of the above. 
%Recall the notation $X\asymp Y$, meaning $c_{1}Y<X<c_{2}Y$ for some fixed positive constants $c_{1}$ and $c_{2}$.

\begin{thm}\label{thm:LowBnd}
Assume $\G$ has critical exponent %no parabolics and 
$\gd>1/2$.
Let $v$ and $w$ be % primitive 
vectors in $\Z^{2}$, %let $0<\gs<1/4$, 
and $n\in\Z$. Let $0<K<T<N$ be  parameters with $K\to\infty$ and $N/K\to\infty$ (and {\it a fortiori} $N/T\to\infty$).
% going to infinity.
%, and $K$ and $T$ be small positive powers of $N$.
%Assume that $|v|\asymp 1$,  $|w|\asymp {N\over T}$ and $|n|\asymp N$. 
Assume that $\frac NK<|n|<N$, $|w|< {N\over T}$, $|v|\le 1$,
and
$
|n|<|v||w|T.
$
Then
$$
\sum_{\g\in\G\atop\|\g\|<T}
\bo{\left\{
|\<v\g,w%_{0}{}^{t}\vp\ {}^{t}\xi
\>-n|<{N\over K}
\right\}}
\gg
{T^{2\gd}\over K}
%+ O(\cdots N^{2\gs\gT})
+ O\left(T^{\frac 34 + 2\gd \frac 14}(\log T)^{1/4}\right)
.
$$
%as $N,K,T\to\infty$.
\end{thm}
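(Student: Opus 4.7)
The plan is to produce a nonnegative smooth minorant of the indicator function in the sum and to evaluate the resulting smooth sum via Theorem~\ref{thm:main} applied to each angular Fourier harmonic. Fix smooth nonnegative bumps $\psi\in C_c^\infty[0,\infty)$ supported in $[1/2,1]$ and $\chi\in C_c^\infty(\R)$ supported in $[-1,1]$, and set
\[
F(\g):=\psi(\|\g\|/T)\,\chi\bigl(K(\<v\g,w\>-n)/N\bigr).
\]
Then $F$ vanishes outside the event $\{\|\g\|<T,\ |\<v\g,w\>-n|<N/K\}$, so $\sum_{\g\in\G}F(\g)$ lower-bounds the target sum, and it suffices to show $\sum_\g F(\g)\gg T^{2\gd}/K$ modulo the claimed error.

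Writing $\g=k_{\theta_1}a_tk_{\theta_2}$ in Cartan coordinates (so $\|\g\|^2=2\cosh t$), a direct computation yields
\[
\<v\g,w\>=|v|\,|w|\bigl(\sin\phi_1\sin\phi_2\,e^{t/2}+\cos\phi_1\cos\phi_2\,e^{-t/2}\bigr),
\]
where $\phi_1=\theta_1+\alpha_v$, $\phi_2=\alpha_w-\theta_2$, with $\alpha_v,\alpha_w$ the polar angles of $v,w$. The next step is to Fourier-invert $\chi$ via $\chi(y)=\int\hat\chi(\xi)e^{2\pi i\xi y}d\xi$ and apply the Jacobi--Anger expansion to the exponential $e^{2\pi i\xi K\<v\g,w\>/N}$, producing a double Fourier series
\[
F(\g)=\sum_{m_1,m_2\in\Z}A_{m_1,m_2}(t)\,e^{2im_1\theta_1}e^{2im_2\theta_2},
\]
whose coefficients are integrals of Bessel functions of argument of order $K|v||w|e^{t/2}/N$ against $\hat\chi$, and are therefore effectively supported on $|m_i|\ll K$.

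I would then apply Theorem~\ref{thm:main} to each harmonic $(m_1,m_2)$, converting the sharp ball count to a smooth $\psi$-weighted shell count via summation by parts in $t$. The sum of the leading terms---each proportional to $\hat\mu(2m_1)\hat\mu(2m_2)$---reassembles by Parseval (viewing $\hat\mu$ as Fourier coefficients of the Patterson--Sullivan measure $\mu$) into a main term of the shape
\[
\mathrm{Main}\ \asymp\ T^{2\gd}\!\int\psi(e^{t/2}/T)\,\gd e^{-\gd t}dt\iint\chi\!\Bigl(\tfrac{K(|v||w|\sin\phi_1\sin\phi_2\,e^{t/2}-n)}{N}\Bigr)d\mu(\phi_1)d\mu(\phi_2).
\]
Under the hypotheses $N/K<|n|<|v||w|T$, the set of $(\phi_1,\phi_2)$ with $\sin\phi_1\sin\phi_2>|n|/(|v||w|T)$ has positive $d\mu\otimes d\mu$-mass (since $\gd>1/2$ forces the limit set to be thick), and for each such pair the inner $\chi$-factor restricts $t$ to a window of length $\asymp 1/K$ within $e^{t/2}\in[T/2,T]$; hence $\mathrm{Main}\gg T^{2\gd}/K$.

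The main obstacle will be the error estimate. A naive sum of the errors from Theorem~\ref{thm:main}, each carrying the factor $(1+|m_1|+|m_2|)^{3/4}$, over the effective range $|m_i|\ll K$ is prima facie too crude. One must also exploit the rapid decay of $\hat\chi$ and of the Bessel coefficients beyond the cutoff, together with a judicious partial summation in $t$ (or, equivalently, an optimized choice of smoothing scale for $\psi$), to balance against the ambient $T^{1/4+3\gd/2}(\log T)^{1/4}$ error and arrive at the claimed exponent $T^{3/4+\gd/2}(\log T)^{1/4}$. This balancing is the most delicate step of the argument.
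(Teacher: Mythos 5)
Your reduction to a smooth minorant and to Theorem \ref{thm:main} is in the right spirit, but the step you yourself flag as unresolved --- summing the errors over the angular harmonics --- is a genuine gap, and it is exactly where your route diverges from the paper's. After Fourier inversion of $\chi$ and Jacobi--Anger, the harmonics \emph{below} the cutoff $|m_1|,|m_2|\ll K$ carry no decay from $\hat\chi$ or from the Bessel factors (decay only sets in beyond the cutoff), so you are left with $\asymp K^2$ pairs, each contributing the Theorem \ref{thm:main} error $T^{1/4+3\delta/2}(\log T)^{1/4}(1+|m_1|+|m_2|)^{3/4}$; the total is of size about $K^{11/4}\,T^{1/4+3\delta/2}(\log T)^{1/4}$. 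This is not only far above the claimed remainder $T^{3/4+\delta/2}(\log T)^{1/4}$, it swamps the main term $T^{2\delta}/K$ as soon as $K$ exceeds a small fixed power of $T$ (roughly $K>T^{(2\delta-1)/15}$), whereas the theorem allows any $K<T$. Partial summation in $t$ or re-optimizing the radial smoothing cannot remove these powers of $K$: they come from the number of harmonics needed to resolve a condition that oscillates at scale $1/K$ in the angle variables, so the ``delicate balancing'' you defer to does not exist within this decomposition.

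The paper's proof avoids the problem by never localizing at scale $1/K$ in the angles. Writing $\gamma=k_u a_\rho k_v$, one has, up to an admissible $O(1/\rho)$ correction, $\langle v\gamma,w\rangle\approx \rho\,|v|\,|w|\cos\mathfrak{u}\cos\mathfrak{v}$, so the condition becomes $\bigl|\tfrac{\rho}{T}\cos\mathfrak{u}\cos\mathfrak{v}-n/(T|v||w|)\bigr|<N/(T|v||w|K)$. Since only a lower bound is required, one passes to a subset: angular sectors $u\in\Psi_\alpha$, $v\in\Phi_\beta$ whose union is an interval independent of $K$ (with Patterson--Sullivan mass $\gg 1$), intersected with a thin window in the norm, $\|\gamma\|$ confined to an annulus of relative width $\asymp 1/K$. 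The factor $1/K$ is thus charged entirely to the radial variable --- Theorem \ref{thm:main} counts with a sharp norm cutoff, so a thin annulus is just a difference of two such counts and costs nothing extra in the error --- while smoothing the sector indicators requires only harmonics whose number does not grow with $K$, so the remainder is that of boundedly many applications of Theorem \ref{thm:main}, with no power of $K$. The missing idea in your write-up is precisely this transfer of the $1/K$-localization from the bilinear-form (angular) data to the norm variable; your main-term analysis (positivity via $d\mu\otimes d\mu$ and the $1/K$-window in $t$) already mirrors the paper's, but without that transfer the error estimate cannot be closed.
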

\

\begin{thm}\label{thm:UpBnd}
Assume $\G$ has $\gd>1/2$ %no parabolics 
and  spectral gap $\gT$.
Let $N, K,$ and $T$ be as above, and fix $q\ge1$.
Fix  $y=(y_{1},y_{2})$ and $(c,d)$ in $\Z^{2}$ 
such that
 $|y|< N$, $|(c,d)|<\frac NT$ and 
 $|y|<T|(c,d)|$. 
%having  sizes  $|y|\asymp N$ and $|(c,d)|\asymp \frac NT$. 
Then
$$
\sum_{\g\in\G\atop \|\g\|<T}
\bo
\left\{|(c,d)\g-y|<\frac {N}K\right\}
\bo
\bigg\{(c,d)\g\equiv y(\mod q) \bigg\}
\ll
{
T^{2\gd}
\over
K^{1+\gd} q^{2}
}
+
T^{\frac67 2\gd + \frac17 2\gT}
,
$$
as $N,K,T\to\infty$.
\end{thm}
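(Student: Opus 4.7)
The plan is to separate the problem into an arithmetic (congruence) part and an Archimedean (localization) part. The congruence condition $(c,d)\g\equiv y\pmod q$ should restrict to roughly a $1/q^2$ fraction of orbit points by equidistribution across cosets of $\G(q)$, controlled by Theorem~\ref{thm:unif} and its spectral-gap error $T^{\frac67\cdot 2\gd + \frac17\cdot 2\gT}$. The remaining Archimedean localization to a box of size $N/K$ should produce a gain $K^{-(1+\gd)}$: a factor $K^{-1}$ from the one-dimensional radial parameter, and $K^{-\gd}$ from the angular direction, whose measure is governed by Patterson-Sullivan, its $\gd$-dimensional fractal nature entering through Theorem~\ref{thm:main} applied harmonic-by-harmonic.

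Concretely, I would first majorize $\bo\{|(c,d)\g-y|<N/K\}$ by a nonnegative smooth bump $\psi:\R^2\to[0,1]$ supported in a slight dilate of the box, with $|\nabla\psi|\ll K/N$, and set $F(\g):=\psi((c,d)\g)$. Using $|(c,d)|<N/T$, $\|\g\|<T$, and the chain rule, $\cS_{\infty,T}F\ll K$. Next decompose $\G$ into cosets of $\G(q)$; since $(c,d)\g\equiv(c,d)\g_0\pmod q$ on each coset $\g_0\G(q)$, the congruence restricts the outer sum to those $\g_0$'s with $(c,d)\g_0\equiv y\pmod q$, and strong approximation (with appropriate treatment of the ramified divisor $q'\mid\fB$) gives the count of such cosets as $M\ll[\G:\G(q)]/q^2$. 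Applying Theorem~\ref{thm:unif} on each such coset and summing gives
$$
\sum_{\g\in\G,\|\g\|<T}\bo\{|(c,d)\g-y|<N/K\}\,\bo\{(c,d)\g\equiv y\pmod q\} \ll \frac{1}{q^2}\sum_{\g\in\G,\|\g\|<T}F(\g)+\frac{T^{2\gd-\ga_0}}{q^2}+T^{\frac67\cdot 2\gd+\frac17\cdot 2\gT},
$$
the last term absorbing $M\cdot(1+K)^{6/7}$ in the admissible parameter range.

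The remaining Archimedean task is to show $\sum_{\g\in\G,\|\g\|<T}F(\g)\ll T^{2\gd}/K^{1+\gd}$. Writing $\g=k_{\gt_1}a_t k_{\gt_2}$ yields $(c,d)\g=|(c,d)|\,(\cos(\ga+\gt_1)e^{-t/2},\sin(\ga+\gt_1)e^{t/2})\,k_{\gt_2}$ with $\ga=\arg(c,d)$, so the constraint $(c,d)\g\approx y$ forces both Cartan angles $\gt_1,\gt_2$ into intervals of width $\sim 1/K$ around values depending on $y$ and $t$. Expanding $\psi$ in Fourier series in $\gt_1,\gt_2$ and invoking Theorem~\ref{thm:main} at each harmonic $(n,k)$, the Patterson-Sullivan factor $\hat\mu(2n)\hat\mu(2k)$ produces the angular savings $K^{-\gd}$ while integration in $t$ yields the radial $K^{-1}$. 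The main obstacle is balancing the $(1+|n|+|k|)^{3/4}$ growth of Theorem~\ref{thm:main}'s remainder against the Fourier-truncation scale $|n|,|k|\ll K$, alongside the absorption of $M\cdot(1+K)^{6/7}$ into the error of Theorem~\ref{thm:unif}; these together make the estimate effective in the stated range of $(q,K,T)$.
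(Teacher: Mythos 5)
Your overall strategy is the same as the paper's: reduce the congruence condition via Theorem \ref{thm:unif}, then handle the Archimedean localization by sector estimates from Theorem \ref{thm:main} together with the $\gd$-regularity of the Patterson--Sullivan measure, arriving at the savings $K^{-1}$ (radial) times $K^{-\gd}$ (angular) and $q^{-2}$ (arithmetic). The one genuinely different choice is the congruence step. You decompose into cosets of the principal-type subgroup $\G(q)$, count the admissible cosets by strong approximation ($M\ll[\G:\G(q)]/q^{2}$), and apply Theorem \ref{thm:unif} once per admissible coset; the spectral error term is then multiplied by $M$ (on the order of $q$ for $\SL_2$-type groups), and you must argue it away. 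The paper instead works with the stabilizer subgroup $\G_{0}(q)=\{\g\in\G:(c,d)\g\equiv(c,d)\ (q)\}$, which contains $\{\g\equiv I(q)\}$ and hence is a legitimate level-$q$ ``congruence'' subgroup in the sense used in Theorem \ref{thm:unif}. Since $(c,d)\g_{1}\bmod q$ determines the coset $\G_{0}(q)\g_{1}$, \emph{at most one} coset survives the condition $(c,d)\g_{1}\equiv y\ (q)$, so Theorem \ref{thm:unif} is invoked exactly once, the error $T^{\frac67 2\gd+\frac17 2\gT}$ appears with no multiplicity, no strong-approximation input is needed, and the main term carries $[\G:\G_{0}(q)]^{-1}\ll q^{-2}$ directly. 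Your route gives the same main term, but the error is uniform in $q$ only after absorbing $M$; since the whole point of the $q^{-2}$ in the statement is uniformity in $q$ for the sieve application, the paper's device is the cleaner (and essentially free) way to get it. The factor $(1+\cS_{\infty,T}F)^{6/7}\ll K^{6/7}$ that you carry honestly is also present implicitly in the paper (which applies Theorem \ref{thm:unif} to a sharp cutoff without comment), so that is not a defect of your argument relative to theirs.

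On the Archimedean part, one geometric statement needs correcting: the constraint $|(c,d)\g-y|<N/K$ does \emph{not} put both Cartan angles in intervals of width $1/K$ uniformly. Writing $\g=k_{u}a_{\rho}k_{v}$, only the outer angle $v$ (the direction toward $y$) is confined to a fixed interval $\Phi$ of length $\ll 1/K$; the inner angle $u$ ranges over a fixed interval $\Psi$ of bounded length, and what is localized is the \emph{coupled} quantity $\rho\,|(c,d)|\cos\frak u$, to a window of relative width $\asymp 1/K$. Since Theorem \ref{thm:main} only counts over full norm balls with fixed harmonics, you cannot extract this coupled localization ``harmonic-by-harmonic by integration in $t$'' in one stroke: as in the paper, one must cut $\Psi$ into $\asymp K$ sectors $\Psi_{\ga}$ of width $1/K$, note that on each sector $\|\g\|$ is confined to an interval of relative width $1/K$, apply Theorem \ref{thm:main} (via smoothed Fourier expansions of the sector indicators, truncated at frequencies $\ll K$) as a difference of two ball counts, and sum over $\ga$, using $\sum_{\ga}\mu(\Psi_{\ga})\ll 1$ and $\mu(\Phi)\ll K^{-\gd}$. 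Your final bookkeeping ($K^{-1}$ radial, a single $K^{-\gd}$ from the $v$-angle) is the correct one and matches the paper; just be aware that the mechanism is the sector decomposition, not a localization of both angles, and that the remainder terms of Theorem \ref{thm:main} accumulated over the $\asymp K$ sectors and the $\ll K^{2}$ harmonics must be checked against $T^{2\gd}/K^{1+\gd}$ in the stated parameter range, exactly the balancing you flag at the end.
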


In \S\ref{sec2}, we collect various preliminary pieces of information before proving Theorem \ref{thm:main} 
in \S\ref{sec3}
 and Theorem  \ref{thm:unif} 
in \S\ref{sec4}.
Finally, Theorem \ref{thm:LowBnd} is proved in \S\ref{sec:LowBnd} and Theorem \ref{thm:UpBnd} is proved in \S\ref{sec:UpBnd}.

\subsection*{Acknowledgements}
The second-named author wishes to express his gratitude to Stephen D. Miller for many helpful conversations.

\section{Preliminaries}\label{sec2}

\subsection{Representation Theory}
\

Let $G=\SL(2,\R)$, let $\G$ be a Fuchsian group of the second kind with critical exponent $\gd>1/2$,  use the notation \eqref{glLab} and \eqref{sjDef}, and let $\varphi_{j}$ be a Laplace eigenfunction corresponding to $\gl_{j}=s_{j}(1-s_{j})$.
The 
decomposition 
into irreducibles 
of the
right regular representation %$\pi$ 
%acting
on the vector space $V=L^{2}(\G\bk G)$ 
is of the form \cite{GelfandGraevPS1966}
\be\label{Vdecomp}
V=
V_{\varphi_{0}}
\oplus
V_{\varphi_{1}}
\oplus
\cdots
\oplus
V_{\varphi_{N}}
\oplus
V_{\rm temp}
,
\ee
where each $V_{\varphi_{j}}$ is the $G$-span of the eigenfunction $\varphi_{j}$, and   is isomorphic as a $G$-representation to the complementary series representation with parameter $s_{j}$ (in our normalization, the principal series representations lie on the critical line $\Re(s)=1/2$). The reducible space $V_{\rm temp}$ consists of the tempered %(continuous) 
spectrum.

It will be convenient to use both the automorphic model above and, say, the line model, which we recall now.
Fix 
$s>1/2$
%one such $\varphi$ with eigenvalue $\gl=s(1-s)$, and use
and let
 $(\pi,V_{s})$  denote the line model for the complementary series representation with parameter $s>1/2$ \cite{GelfandGraevPS1966}. That is, let $G$ act on functions $f:\R\to\C$ with action given by 
$$
\pi\mattwo abcd.f(x) = |-bx+d|^{-2s}f\left({ax-c\over -bx+d}\right).
$$
The intertwining operator $\cI:V_{s}\to V_{1-s}$ is defined by
\be\label{eq:intertwin}
\cI.f(y):= \int_{\R}{f(x)\over |x-y|^{2(1-s)}}dx.
\ee
For $f_{1},f_{2}\in V_{s}$, the pairing is 
\be\label{eq:pairing}
\<f_{1},f_{2}\>=\int_{\R}f_{1}(x) \overline{\cI.f_{2}(x)}dx. 
\ee
Then $V_{s}$ consists of functions $f$ with $\<f,f\><\infty$.

\subsection{Raising, Lowering, and Casimir Operators}
\

We return to the automorphic model and
let $\cH$ be one such irreducible $V_{\varphi_{j}}$. The dense subspace $\cH^{\infty}$ of smooth vectors is infinite dimensional, and decomposes further into one-dimensional $K$-isotypic components:
\be\label{eq:grading}
\cH^{\infty} = \bigoplus_{k\in\Z} \cH^{(2k)}
,
\ee
each $\cH^{(2k)}=\C\cdot v_{2k}$ consisting of functions of ``weight $2k$'', that is, those functions $v_{2k}\in\cH^{\infty}$ which transform as:
$$
v_{2k}(gk_{\gt})
=
e^{2ik\gt}
v_{2k}(g).
$$

Let 
\be\label{hef}
h=\mattwo100{-1},
\qquad
e=\mattwo0100,
\quad
\text{and}
\quad
f=\mattwo0010
\ee
denote a %n orthonormal 
basis for the Lie algebra $\fg=\frak{sl}(2,\R)$.  The ladder (raising and lowering) operators $\cR$ and $\cL$ in the complexified Lie algebra $\fg_{\C}$ are defined by:
$$
\cR=h+i(e+f),
\qquad\qquad
\cL=h-i(e+f).
$$
Recall also the Casimir element $\cC$, which generates the center of the universal enveloping algebra $\cU(\fg_{\C}),$ and acts on $\cH$ as scalar multiplication by $-2\gl=-2s(1-s)$: 
$$
\cC
=
\foh h^{2}+ef+fe
.
$$

We will require expressions for these operators in Cartan coordinates $(\gt_{1},t,\gt_{2})$, corresponding to $g=k_{\gt_{1}}\,a_{t}\,k_{\gt_{2}}$.
For the Casimir operator, these can be found in many places, 
 e.g. \cite[p. 216 and p. 700]{Knapp1986}, \cite[p. 884]{CasselmanMilicic1982}, or \cite[\S7]{KnappTrapa2000}:
\be\label{Cis}
\foh\
{\cC} 
= 
{\dd ^{2 }
\over \dd t^{2}}
+
(\coth t)
{\dd  
\over \dd t }
+
\csch^{2}t
\left(
{\dd  ^{2}
\over \dd \gt_{1}^{2} }
+
{\dd  ^{2}
\over \dd \gt_{2}^{2} }
\right)
-
{2\cosh t
\over
\sinh^{2}t
}
{\dd  ^{2}
\over \dd \gt_{1}\dd\gt_{2} }
.
\ee

\

The raising and lowering operators are not as readily available in Cartan coordinates in the literature, so we derive their expression here.
\begin{lem}
In $KA^{+}K$ coordinates, the raising and lowering operators are:
$$
\cR
=
e^{2 i \gt _2}
\bigg(
-i  \csch(t)
{\dd\over \dd \gt_{1}}
+
2
{\dd\over \dd t}
+
i
\coth (t)
{\dd\over \dd \gt_{2}}
\bigg)
,
$$
and 
$$
\cL
=
e^{-2 i \gt _2}
\bigg(
i  \csch(t)
{\dd\over \dd \gt_{1}}
+
2
{\dd\over \dd t}
-
i
\coth (t)
{\dd\over \dd \gt_{2}}
\bigg)
.
$$
\end{lem}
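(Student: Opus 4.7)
The plan is to derive the formulas by directly computing the right-regular action in Cartan coordinates, in the same spirit as the Casimir computation giving \eqref{Cis}. Write $g = k_{\gt_{1}}a_{t}k_{\gt_{2}}$, and for $X\in\fg_{\C}$ parametrize $g\,\exp(sX) = k_{\gt_{1}(s)}\,a_{t(s)}\,k_{\gt_{2}(s)}$ with $(\gt_{1}(0),t(0),\gt_{2}(0)) = (\gt_{1},t,\gt_{2})$. Then
$$
d\pi(X) \;=\; \dot\gt_{1}\,\partial_{\gt_{1}} \;+\; \dot t\,\partial_{t} \;+\; \dot\gt_{2}\,\partial_{\gt_{2}},
$$
so the task reduces to expressing $(\dot\gt_{1},\dot t,\dot\gt_{2})$ as a linear function of $X$.

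Differentiating the parametrization at $s=0$, using $\frac{d}{d\gt}k_{\gt} = W k_{\gt}$ with $W := e - f$ and $\frac{d}{dt}a_{t} = -\tfrac{1}{2}\,h\,a_{t}$, and then multiplying on the left by $g^{-1}$, one obtains the Lie-algebra identity
$$
\dot\gt_{1}\,\operatorname{Ad}(k_{\gt_{2}}^{-1}a_{t}^{-1})W \;-\; \tfrac{\dot t}{2}\,\operatorname{Ad}(k_{\gt_{2}}^{-1})h \;+\; \dot\gt_{2}\,W \;=\; X,
$$
where the $k_{\gt_{1}}^{-1}$ factors drop out because $\operatorname{Ad}(k_{\gt_{1}}^{-1})W = W$ (as $K$ is abelian), and the inner $\operatorname{Ad}(a_{t})h = h$ since $A$ is abelian. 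The remaining adjoint actions are elementary: direct matrix conjugation gives $\operatorname{Ad}(a_{t}^{-1})W = \cosh t\cdot W + \sinh t\cdot(e+f)$, and $\operatorname{Ad}(k_{\gt_{2}}^{-1})$ fixes $W$ while rotating the plane $\{h,\,e+f\}$ through angle $2\gt_{2}$. Substituting these and expanding in the basis $\{h,e,f\}$ of $\fg$ produces a $3\times 3$ linear system for $(\dot\gt_{1},\dot t,\dot\gt_{2})$ whose coefficient matrix depends on $t$ and $\gt_{2}$.

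Specializing $X = \cR = h + i(e+f)$, and then $X = \cL = h - i(e+f)$, and solving, the $h$-row decouples immediately to give $\dot t$, while summing and differencing the $e$- and $f$-rows separate the remaining two unknowns. The rotation through $2\gt_{2}$ on the right-hand side assembles into the global prefactor $e^{\pm 2i\gt_{2}}$, and the stated coefficients on $\partial_{\gt_{1}}$, $\partial_{t}$, $\partial_{\gt_{2}}$ emerge. One may also observe that $\cL = \overline{\cR}$ under complex conjugation, so in principle only one of the two computations is strictly necessary. The whole argument is entirely routine; the only real obstacle is meticulous bookkeeping of signs and the careful ordering of the two adjoint actions.
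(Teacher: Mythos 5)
Your approach is the same as the paper's: both compute the differential of the Cartan decomposition, reduce to a $3\times 3$ linear system in $\fg_{\C}$ by means of the adjoint actions of $a_{t}^{-1}$ and $k_{\gt_{2}}^{-1}$ on $e-f$, $e+f$, $h$, and then solve for the coordinate components of $\cR$ (your observation that $\cL$ follows by complex conjugation is valid, and the paper likewise treats $\cL$ as "the same computation"). One concrete caveat, though. With the conventions you actually commit to --- $a_{t}=\mathrm{diag}(e^{-t/2},e^{t/2})$ as in the paper's introduction, hence $\tfrac{d}{dt}a_{t}=-\tfrac12 h\,a_{t}$ and $\ad(a_{t}^{-1})(e-f)=\cosh t\,(e-f)+\sinh t\,(e+f)$ --- solving your system for $X=\cR$ gives $\dot\gt_{1}=i e^{2i\gt_{2}}\csch t$, $\dot t=-2e^{2i\gt_{2}}$, $\dot\gt_{2}=-ie^{2i\gt_{2}}\coth t$, i.e.\ exactly the \emph{negative} of the displayed operator (and similarly for $\cL$). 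The stated signs emerge under the opposite $A^{+}$ convention, $a_{t}=\mathrm{diag}(e^{t/2},e^{-t/2})$, equivalently $\ad(a_{t}^{-1})(e-f)=\cosh t\,(e-f)-\sinh t\,(e+f)$ together with $h=2\partial_{t}$; this is what the paper's own computation silently uses (its intermediate formulas differ from yours in precisely these two places, which is tied to the $\pm t$ ambiguity in Cartan coordinates, i.e.\ shifting $\gt_{1},\gt_{2}$ by $\pm\pi/2$ and so flipping $e^{2i\gt_{2}}$). So your derivation is methodologically fine, but as written it terminates in $-\cR$ and $-\cL$; you should either switch the $A^{+}$ convention or track this overall sign rather than asserting that the stated coefficients emerge from the system as you set it up.
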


\pf

Set $V=e+f$ and $Y=e-f$, so that $e=\foh V+ \foh Y$ and $f=\foh V - \foh Y$.
Let $\cJ$ be the Cartan decomposition, that is, the injection 
$$
\cJ:K\times A^{+}\times K\to G
.
$$ 
For a point $x=(\gt_{1},t,\gt_{2})$, we compute the derivation 
$$
D({\cJ}):T_{x}\to T_{g}=\fg
,
$$ 
as follows. Any element $T_{x}$ is of the form
$$
T_{x}=\xi_{1}Y_{1}+\eta h + \xi_{2}Y_{2},
$$
where $Y_{1}=Y=Y_{2}\in\fk=\frak {so}(2)$, but each is interpreted as a vector field in its respective component. We compute keeping only first order terms:

$$
\hskip-2in\cJ(k_{1}(I+\xi_{1}Y_{1}),\ a_{t}(I+\eta h), \ k_{2}(I+\xi_{2}Y_{2}))
$$
\beann
&
=&
k_{1}(I+\xi_{1}Y_{1}) a_{t}(I+\eta h) k_{2}(I+\xi_{2}Y_{2})
\\
&
=&
k_{1}a_{t}k_{2}(k_{2}^{-1}a_{t}^{-1}(I+\xi_{1}Y_{1}) a_{t} k_{2})(k_{2}^{-1}(I+\eta h) k_{2})(I+\xi_{2}Y_{2})
\\
&
=&
k_{1}a_{t}k_{2}(I+\ad(k_{2}^{-1})\ad(a_{t}^{-1})\xi_{1}Y_{1}) (I+\ad(k_{2}^{-1})\eta h)(I+\xi_{2}Y_{2})
\\
&
\approx&
k_{1}a_{t}k_{2}(I+\ad(k_{2}^{-1})\ad(a_{t}^{-1})\xi_{1}Y_{1} +\ad(k_{2}^{-1})\eta h +\xi_{2}Y_{2})
\\
&
=&
k_{1}a_{t}k_{2}\Bigg(I+\ad(k_{2}^{-1})\bigg\{\ad(a_{t}^{-1})\xi_{1}Y_{1} +\eta h +\ad(k_{2})\xi_{2}Y_{2}\bigg\}\Bigg)
.
\eeann
Hence
$$
D({\cJ}):\xi_{1}Y_{1}+\eta h + \xi_{2}Y_{2}\quad\mapsto\quad
\ad(k_{2}^{-1})\bigg\{\ad(a_{t}^{-1})\xi_{1}Y_{1} +\eta h +\ad(k_{2})\xi_{2}Y_{2}\bigg\}
.
$$
We compute 
\beann
\ad(a_{t}^{-1})Y
&=&
a_{t}^{-1} Y a_{t}
=
e^{-t}e-e^{t}f
=
e^{-t}
(\foh V + \foh Y)
-e^{t}
(\foh V - \foh Y)
\\
&=&
 \cosh (t)Y- \sinh (t)V
,
\\
%%%%%
\ad(k^{-1})Y
&=&
Y,
\\
%%%%%%%%
\ad(k_{\gt}^{-1})V
&=&
 \cos (2 \gt )V- \sin (2 \gt )h
,
\\
%%%%%%%%
\ad(k_{\gt}^{-1})h
&=&
 \sin (2 \gt )V+ \cos (2 \gt )h
.
\eeann
Therefore

\beann
&&
\hskip-.5in
D({\cJ}):
\xi_{1}Y_{1}+\eta h + \xi_{2}Y_{2}
\\
&\mapsto&
\ad(k_{2}^{-1})\bigg\{\ad(a_{t}^{-1})\xi_{1}Y_{1} +\eta h +\ad(k_{2})\xi_{2}Y_{2}\bigg\}
\\
&=&
\ad(k_{2}^{-1})\bigg\{\xi_{1}( \cosh (t)Y- \sinh (t)V) +\eta h +\xi_{2}Y\bigg\} %%???? drop _{1},_{2}???
\\
&=&
\ad(k_{2}^{-1})\bigg\{-\xi_{1} \sinh (t)V +\eta h +(\xi_{2}+\xi_{1} \cosh (t))Y\bigg\}
\\
&=&
-\xi_{1} \sinh (t)
%\ad(k_{2}^{-1})V
\bigg( \cos (2 \gt_{2} )V- \sin (2 \gt_{2} )h
\bigg)
\\
&&
 +\eta 
% \ad(k_{2}^{-1})h 
\bigg( \sin (2 \gt_{2} )V+ \cos (2 \gt_{2} )h
\bigg)
 +(\xi_{2}+\xi_{1} \cosh (t))
 %\ad(k_{2}^{-1})
 Y
\\
&=&
\bigg(
-\xi_{1} \sinh (t)\cos (2 \gt_{2} )
+\eta\sin (2 \gt_{2} )
\bigg)
V
\\
&&
+
\bigg(
\xi_{1} \sinh (t)
  \sin (2 \gt_{2} )
 +\eta 
\cos (2 \gt_{2} )
\bigg)
h
%  \\
%  &&
 +
 \bigg(
 \xi_{2}+\xi_{1} \cosh (t)
 \bigg)
 Y
 .
\eeann
To determine which $(\xi_{1},\eta,\xi_{2})$ give $\cR=h+iV$, we simply solve the linear  system  of equations:
\beann
-\xi_{1} \sinh (t)\cos (2 \gt_{2} )
+\eta\sin (2 \gt_{2} )
&=&i
\\
\xi_{1} \sinh (t)
  \sin (2 \gt_{2} )
 +\eta 
\cos (2 \gt_{2} )
&=&1
\\
\xi_{2}+\xi_{1} \cosh (t)
&=&
0
,
 \eeann
which has the solution:
\beann
\xi_{1}
&=&
-i e^{2 i \gt _2} \text{csch}(t)
   \\
   \eta
&=&
e^{2 i \gt _2}
\\
\xi_{2}
&=& 
ie^{2i\gt_{2}}\coth (t)
   .
\eeann
Of course $Y_{1}={\dd\over \dd \gt_{1}}$, $h=2{\dd\over \dd t}$, and $Y_{2}={\dd\over \dd\gt_{2}}$,
whence
$$
D({\cJ}):
\bigg(
-i e^{2 i \gt _2} \csch(t)
{\dd\over \dd \gt_{1}}
+
2
e^{2 i \gt _2}
{\dd\over \dd t}
+
ie^{2i\gt_{2}}\coth (t)
{\dd\over \dd \gt_{2}}
\bigg)
\mapsto 
\cR
.
$$
The formula for $\cL$ is derived in the same way.
\epf
\

\subsection{Polar Coordinates in the Disk Model}
\

At times it will also be convenient to use polar coordinates 
$(\gt_{1},r,\gt_{2})$ in  the unit tangent bundle of the disk model $\bD$, obtained from Cartan coordinates $(\gt_{1},t,\gt_{2})$ by 
the change of variables 
\be\label{rTot}
r={e^{t}-1\over e^{t}+1}=\tanh (t/2),\qquad {1+r\over 1-r}=e^{t},
\ee
with
$$
{\dd\over \dd t}={\dd r\over \dd t}{\dd\over \dd r}
={2e^{t}\over (e^{t}+1)^{2}}{\dd\over \dd r}
=\foh\sech^{2}(t/2){\dd\over \dd r}
=\foh(1-r^{2}){\dd\over \dd r}
,
$$
and
\beann
\csch(t)
=
\frac{1-r^2}{2 r}
,
\qquad
\coth(t)
=
\frac{1+r^2}{2 r}
.
\eeann

In the $(\gt_{1},r,\gt_{2})$, coordinates,  the Casimir operator becomes:
\bea\label{CisR}
\foh
\ {\cC} 
&=& 
{
(1-r^{2})^{2}
\over 
4
}
{\dd^{2}
\over
\dd r^{2}}
+
{
(1-r^{2})^{2}
\over 
4r
}
{\dd
\over
\dd r}
\\
\nonumber
&&
+
{
(1-r^{2})^{2}
\over
16 r^{2}
}
\bigg(
{\dd^{2}
\over
\dd \gt_{1}^{2}}
+
{\dd^{2}
\over
\dd \gt_{2}^{2}}
\bigg)
-
{
1-r^{4}
\over
8r^{2}
}
{\dd^{2}
\over
\dd \gt_{1}\dd\gt_{2}}
,
\eea
and the ladder operators are
\be\label{RisR}
\cR
=
e^{2 i \gt _2}
\bigg(
-i 
\frac{1-r^2}{2 r}
{\dd\over \dd \gt_{1}}
+
(1-r^{2})
{\dd\over \dd r}
+
i
\frac{1+r^2}{2 r}
{\dd\over \dd \gt_{2}}
\bigg)
,
\ee
and
$$
\cL
=
e^{-2 i \gt _2}
\bigg(
i 
\frac{1-r^2}{2 r}
{\dd\over \dd \gt_{1}}
+
(1-r^{2})
{\dd\over \dd r}
-
i
\frac{1+r^2}{2 r}
{\dd\over \dd \gt_{2}}
\bigg)
.
$$

%Give formulas for $\cR$, $\cL$ and $\cC$, and restricted to weight $2k$.

\subsection{$K$-isotypic Vectors in the Line Model}
Turning now to the line model, let $\cH=V_{s}$ with grading as in \eqref{eq:grading}. 
\begin{lem}
In the line model, if $f_{2k,s}\in V_{s}^{(2k)}$ then
\be\label{eq:f2ks}
f_{2k,s}(x)
=
c
(x-i)^{k-s}
(x+i)^{-k-s}
\ee
for some $c\in\C$.
\end{lem}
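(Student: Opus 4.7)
The plan is to reduce the statement to a first order linear ODE by differentiating the $K$-action at the identity, and then solve it by partial fractions.

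First I would write down the generator of $K$ inside $\fg$: with $Y=e-f=\bigl(\begin{smallmatrix}0&1\\-1&0\end{smallmatrix}\bigr)$ we have $k_\gt=\exp(\gt Y)$. By the defining formula for $\pi$ applied to $k_\gt$ (using $a=d=\cos\gt$, $b=-c=\sin\gt$), one computes
\[
\pi(k_\gt).f(x)=|\cos\gt-x\sin\gt|^{-2s}\, f\!\left(\frac{x\cos\gt+\sin\gt}{-x\sin\gt+\cos\gt}\right).
\]
Differentiating at $\gt=0$ (the modulus is $+1$ there, so there is no sign issue), the prefactor contributes $2sx\,f(x)$ and the Möbius argument contributes $(1+x^2)f'(x)$, giving
\[
d\pi(Y).f(x)=(1+x^2)f'(x)+2sx\,f(x).
\]

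Next, since $v_{2k}(gk_\gt)=e^{2ik\gt}v_{2k}(g)$, an element $f_{2k,s}\in V_s^{(2k)}$ must satisfy $d\pi(Y).f_{2k,s}=2ik\,f_{2k,s}$, i.e.
\[
(1+x^2)f_{2k,s}'(x)+(2sx-2ik)f_{2k,s}(x)=0.
\]
This is a first order homogeneous linear ODE, so its solution space is one-dimensional, which matches the assertion $\dim V_s^{(2k)}=1$ from \eqref{eq:grading}. Rewriting
\[
\frac{f_{2k,s}'(x)}{f_{2k,s}(x)}=\frac{2ik-2sx}{(x-i)(x+i)},
\]
a partial fraction decomposition (solve $A(x+i)+B(x-i)=-2sx+2ik$, yielding $A=k-s$, $B=-k-s$) gives
\[
\frac{f_{2k,s}'(x)}{f_{2k,s}(x)}=\frac{k-s}{x-i}+\frac{-k-s}{x+i}.
\]
Integrating, $f_{2k,s}(x)=c\,(x-i)^{k-s}(x+i)^{-k-s}$ for some constant $c\in\C$, as claimed.

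The only subtle point is the choice of branch for the non-integer powers $(x\pm i)^{\pm(k\mp s)}$, but since we are working with $x\in\R$ and $\pm i$ lie off the real axis, one fixes the principal branch cut away from $\R$; the resulting function is smooth on $\R$ and visibly lies in $V_s$ (one checks the pairing \eqref{eq:pairing} converges, using $|f_{2k,s}(x)|\asymp(1+x^2)^{-s}$). No single step here is hard; the computation of $d\pi(Y)$ is the only place where one must be careful with signs.
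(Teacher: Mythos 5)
Your proof is correct and follows essentially the same route as the paper: compute the action of $Y=e-f$ in the line model, impose the weight-$2k$ condition $d\pi(Y).f=2ik\,f$, and solve the resulting first-order ODE, whose unique solution up to scalar is $c\,(x-i)^{k-s}(x+i)^{-k-s}$. You have merely written out the differentiation of the $K$-action and the partial-fraction integration that the paper leaves as "elementary to verify."
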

\pf
For any vector $f\in V_{s}^{(2k)}$, the action of $Y=e-f\in\fg$ is $Y.v={\dd\over \dd\gt_{2}}v=2ikv$.
We compute in the line model that the element $Y$ acts on $V_{s}$ by:
$$
Y.f(x) = 2sxf(x)+(1+x^{2})f'(x).
$$
Hence $f\in V_{s}^{(2k)}$, if $f$ satisfies 
$$
 2sxf(x)+(1+x^{2})f'(x) = 2i k f(x)
.
$$
It is elementary to verify that \eqref{eq:f2ks} is the unique solution up to constant.
\epf

\subsection{Fourier Expansions}\label{secFEs}
\

Let $v_{2k}\in\cH^{(2k)}$ be a $K$-isotypic vector, and recall that the Casimir operator acts as  $\foh D(\cC)+\gl=0$, with $\gl=s(1-s)$ and $s>1/2$. Then as a function in $(\gt_{1},r,\gt_{2})$ coordinates, one has the Fourier expansion:
$$
v_{2k}(\gt_{1},r,\gt_{2})
= 
e^{2ik\gt_{2}}
\sum_{n\in\Z} 
v_{2n,2k}(r) \ 
e^{2in\gt_{1}}
.
$$
(Only even frequencies appear, since the representation factors through $\PSL(2,\R)$.) The differential equation induced from \eqref{CisR} on $v_{2n,2k}$ is:
\bea\label{CisNK}
&&
{
(1-r^{2})^{2}
\over 
4
}
{\dd^{2}
\over
\dd r^{2}}
v_{2n,2k}(r)
+
{
(1-r^{2})^{2}
\over 
4r
}
{\dd
\over
\dd r}
v_{2n,2k}(r)
\\
\nonumber
&&
\qquad
+
\bigg(
-
{
(1-r^{2})^{2}
\over
4 r^{2}
}
\bigg(
n^{2}
+
k^{2}
\bigg)
%v_{2n,2k}(r)
+
{
1-r^{4}
\over
2r^{2}
}
nk
%v_{2n,2k}(r)
+
s(1-s)
\bigg)
v_{2n,2k}(r)
=
0
.
\eea

Before solving this equation, we note that $v_{2k}$ is regular everywhere, in particular at the origin, and hence so is $v_{2n,2k}$. Expansion in series of \eqref{CisNK} about the origin $r=0$ gives
$$
(
1+O(r)
)
{\dd^{2}
\over 
\dd r^{2}}
+
{
1+O(r)
\over 
r
}
{\dd
\over
\dd r}
-
{
1+O(r)
\over
r^{2}
}
(n-k)^{2}
=
0
,
$$
which has asymptotic solution of the form:
$$
\twocase{}
{( c_{1} r^{n-k} + c_{2} r^{k-n} )(1+O(r))}{if $n\neq k$,}
{( c_{1}  + c_{2} \log r )(1+O(r))}{if $n= k$.}
$$
In the case when $n>k$ (respectively, $n<k$),  regularity at the origin forces   $c_{2}$ (respectively, $c_{1}$) to vanish.
If $n=k$, then $c_{2}=0$. Either way, there is a multiplicity one principle, and $v_{2n,2k}$ is a constant multiple of the unique solution  $\Phi_{2n,2k}$ to \eqref{CisNK}  having $1$ in its first non-zero Taylor coefficient in $r$ (note that $\Phi_{2n,2k}$ vanishes to order $|n-k|$ at the origin $r=0$). One can explicitly compute:
\be\label{PhiIs}
\Phi_{2n,2k}(r)
=
\left(1-r^2\right)^{s } r^{|n-k|}
{
 \, _2F_1\left(s-\e_{n,k}\, k,s
   +\e_{n,k}\,n;1+|n-k|;r^2\right)
}
,
\ee
where 
\be\label{enk}
\e_{n,k}=\twocase{}{1}{if $n\ge k$}{-1}{otherwise,}
\ee 
and $_{2}F_{1}$ is the standard Gauss hypergeometric series.
We will 
use the same name $\Phi_{2n,2k}$ for the function on $G$ defined by
$$
\Phi_{2n,2k}(k_{\gt_{1}}a_{t}k_{\gt_{2}})
=
e^{2in\gt_{1}}\
\Phi_{2n,2k}(r)\
e^{2ik\gt_{2}}
,
$$
where $r$ is related to $t$ by \eqref{rTot}.

\subsection{Choice of a Basis from Ladder Operators}
\

Take a $K$-fixed vector $v_{0}\in\cH$ (it is unique up to scalar), and assume that it has unit norm under the inner product $\<\cdot,\cdot\>$ in $\cH$ (whence it is unique up to a scalar of norm one). 
%Let $\pi$ denote the right-regular representation on the irreducible vector space $\cH$. 
The raising (respectively, lowering) operator takes vectors in $\cH^{(2k)}$ to ones in $\cH^{(2k+2)}$ (respectively, $\cH^{(2k-2)}$), but the images no longer have unit norm. The normalization is as follows.

\begin{lem}

Let $\cX$ be a ladder operator, $\cX=\cR$ or $\cX=\cL$. Then for any $k\ge0$,
\be\label{bksDef}
\<
\cX^{k}.v_{0},
\cX^{k}.v_{0}
\>
=
2^{2 k}
{
\G(s+k)
\G(1-s+k)
\over 
\G(s)
\G(1-s)
}
=:
b_{k,s}
.
\ee
\end{lem}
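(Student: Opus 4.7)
The plan is to derive a two-term recursion for $b_{k,s}$ from the Lie algebra structure of the complementary series, and then solve it in closed form; no explicit computation in the line or Cartan model is needed.

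The first step is to exploit unitarity of $(\pi,\cH)$: elements of the real Lie algebra $\fg$ act as skew-Hermitian operators with respect to $\langle\cdot,\cdot\rangle$, and since $(X+iY)^{*}=-X+iY$ for $X,Y\in\fg$, this yields the adjoint identities $\cR^{*}=-\cL$ and $\cL^{*}=-\cR$. Using that each weight space $\cH^{(2m)}$ is one-dimensional, for $\cX=\cR$ and $k\ge1$ one writes
\[
\langle \cR^{k}v_{0},\cR^{k}v_{0}\rangle \;=\; -\big\langle \cR^{k-1}v_{0},\,(\cL\cR)\,\cR^{k-1}v_{0}\big\rangle,
\]
with $\cL\cR$ acting on the line $\C\cdot\cR^{k-1}v_{0}\subset\cH^{(2(k-1))}$ as a scalar to be computed.

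The second step is to compute that scalar from the Casimir relation. A short manipulation in $\cU(\fg_{\C})$, using $[e,f]=h$, $[h,e]=2e$, $[h,f]=-2f$, yields the identities
\[
\cR\cL+\cL\cR \;=\; 2Y^{2}+4\cC,\qquad \cR\cL-\cL\cR \;=\; -4iY,
\]
where $Y=e-f\in\fk$. Since $Y$ acts on $\cH^{(2m)}$ as $2im$ and $\cC$ acts as $-2s(1-s)$, solving the linear system produces
\[
\cL\cR\big|_{\cH^{(2m)}} \;=\; -4(m+s)(m+1-s).
\]
Taking $m=k-1$ gives the recursion $b_{k,s}=4(s+k-1)(k-s)\,b_{k-1,s}$, and iterating from the base case $b_{0,s}=\|v_{0}\|^{2}=1$ produces
\[
b_{k,s} \;=\; 4^{k}\,\frac{\Gamma(s+k)\,\Gamma(1-s+k)}{\Gamma(s)\,\Gamma(1-s)},
\]
which is the claim. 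The case $\cX=\cL$ is entirely analogous: using $\cL^{*}=-\cR$ and evaluating $\cR\cL$ (the analogue of the computation above) on $\cH^{(-2(k-1))}$ gives the same recursion constant $4(s+k-1)(k-s)$.

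The only genuine point requiring care -- rather than a real obstacle -- is the sign in the adjoint identity $\cR^{*}=-\cL$, which comes from combining skew-Hermiticity of real Lie algebra elements with the conjugate-linearity of the adjoint in the complex coefficients, producing $(X+iY)^{*}=-X+iY$ rather than $-X-iY$. Once this is pinned down, the commutator identities and the resulting recursion are mechanical.
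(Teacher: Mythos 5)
Your proof is correct and is essentially the paper's argument: the paper likewise uses the adjoint relation $\<\cR.v,w\>=-\<v,\cL.w\>$ together with the identity $\cL\cR=2\cC+Y^{2}+2iY$ acting as the scalar $-4(s+m)(1-s+m)$ on the weight-$2m$ space, and iterates exactly the recursion you derive. Your explicit verification of the commutator identities and the sign in $\cR^{*}=-\cL$ just fills in what the paper calls a ``standard calculation,'' so there is nothing to add.
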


\pf
We will exhibit the computation for $\cX=\cR$, the  case of $\cX=\cL$ being similar.
A standard calculation (recall that $Y=e-f$ in the basis \eqref{hef} and acts as ${\dd\over \dd\gt_{2}}$) shows that 
\be\label{LR}
\cL\cR
=
2\cC
+Y^{2}+2iY,
\ee
and hence acts on $\cH^{(2k)}$ as scalar multiplication by 
$$
-4 \gl +(2ik)^{2}+2i(2ik)
=
-4(s+k)(1-s+k) 
.
$$
Using
\be\label{Adjoint}
\<\cR.v,w\>
=
-
\<v,\cL.w\>
\ee
then gives
\beann
\<
\cR^{k}.v_{0},
\cR^{k}.v_{0}
\>
&=&
(-1)^{k}
\<
\cL^{k}\cR^{k}.v_{0},
v_{0}
\>
\\
&
=&
(-1)^{k}
(-4(s)(1-s))
(-4(s+1)(1-s+1))
\cdots
\\
&&
\qquad\qquad
\times
(-4(s+k-1)(1-s+k-1))
\<
v_{0},
v_{0}
\>
\\
&
=&
(-1)^{k}
(-1)^{k}
4^{k}
{
\G(s+k)
\G(1-s+k)
\over 
\G(s)
\G(1-s)
}
,
\eeann
as claimed.
\epf

Now given a fixed vector $v_{0}\in\cH^{(0)}$ of unit norm, we make once and for all  the following choice for an orthonormal basis 
for the space $\cH^{\infty}$ of smooth vectors.
\begin{Def}
{\rm
For $k\neq0$,
let
\be\label{v2kDef}
v_{2k}:=
\frac{
1}{
\sqrt{
b_{|k|,s}}
}
\twocase{}
{\cR^{k}.v_{0}}
{if $k>0$,}
{\cL^{|k|}.v_{0}}{if $k<0$,}
\ee
where $b_{k,s}$ is defined in \eqref{bksDef}.
}
\end{Def}

\

\subsection{Basis for $V_{s}$ in the Line Model}

In the line model $V_{s}$, we have the un-normalized functions $f_{2k,s}$ given in \eqref{eq:f2ks}. Hence to determine the relationship between $f_{2k,s}$ and $v_{2k}$ in \eqref{v2kDef}, we need only to normalize $f_{2k,s}$. First we compute the action of the intertwining operator.
\begin{lem}
For $f_{2k,s}$ as in \eqref{eq:f2ks}, we have
\be\label{eq:cINorm}
\cI.f_{2k,s} 
= 
{4^{1-s}\pi (-1)^{k} \G(2s-1)\over \G(s-k)\G(s+k)} 
f_{2k,1-s}
.
\ee
\end{lem}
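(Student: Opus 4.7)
The plan is to combine the $K$-equivariance of $\cI$ with a one-step recurrence from the raising operator, reducing the computation to a single Beta integral. Since $\cI\colon V_s\to V_{1-s}$ is a $G$-intertwiner, in particular $K$-equivariant, and each isotype $V_s^{(2k)}$ is one-dimensional (spanned by $f_{2k,s}$ as in \eqref{eq:f2ks}), we must have $\cI.f_{2k,s}=c(k,s)\,f_{2k,1-s}$ for some scalar $c(k,s)$; the task is to identify it with the constant in \eqref{eq:cINorm}.

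To propagate in $k$, I would compute the action of $\cR$ directly in the line model. Differentiating the formula for $\pi(g)$ at the identity gives the first-order operators
\[
h.f=2sf+2xf',\qquad e.f=2sxf+x^2f',\qquad f.f=-f',
\]
which combine, via $1+ix=i(x-i)$ and $2x+i(x^2-1)=i(x-i)^2$, into $\cR=i(x-i)\bigl[2s+(x-i)\partial_x\bigr]$. Applied to $f_{2k,s}(x)=(x-i)^{k-s}(x+i)^{-k-s}$, the two resulting terms collapse onto the common factor $(x-i)^{k-s}(x+i)^{-k-s-1}$ and produce $\cR.f_{2k,s}=-2(s+k)\,f_{2(k+1),s}$. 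Intertwining $\cI\cR=\cR'\cI$ with the corresponding raising operator $\cR'$ on $V_{1-s}$ then yields
\[
c(k+1,s)=\frac{k+1-s}{k+s}\,c(k,s)\qquad(k\in\Z).
\]

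It remains to pin down the base case $c(0,s)$, which I would do by evaluating the defining integral at $y=0$. Since $f_{0,s}(x)=(x^2+1)^{-s}$ and $f_{0,1-s}(0)=1$, the substitution $u=x^2$ turns this into a Beta integral:
\[
c(0,s)=\int_\R(x^2+1)^{-s}\,|x|^{2s-2}\,dx = B\bigl(s-\tfrac12,\tfrac12\bigr) = \frac{\sqrt{\pi}\,\Gamma(s-1/2)}{\Gamma(s)}.
\]
By Legendre's duplication formula $\Gamma(2s-1)=\pi^{-1/2}\,2^{2s-2}\,\Gamma(s-1/2)\Gamma(s)$, this equals $4^{1-s}\pi\,\Gamma(2s-1)/\Gamma(s)^2$, matching \eqref{eq:cINorm} at $k=0$. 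Iterating the recurrence and simplifying via $\Gamma(s+k)=(s)_k\Gamma(s)$ together with $\Gamma(s-k)=(-1)^k\Gamma(s)/(1-s)_k$ then converts $c(0,s)\cdot(1-s)_k/(s)_k$ into $c(0,s)\cdot(-1)^k\Gamma(s)^2/[\Gamma(s-k)\Gamma(s+k)]$, which is \eqref{eq:cINorm}.

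The main obstacle is the algebraic cancellation inside $\cR.f_{2k,s}$ producing the clean factor $-2(s+k)$: it requires careful tracking of the branches of $(x\pm i)^{\pm s}$ and the right grouping of terms. Once that is in hand, the remaining ingredients are the standard Beta integral and a $\Gamma$-function duplication.
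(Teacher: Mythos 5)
Your argument is correct, and it reaches the constant by a genuinely different route than the paper. Both proofs begin identically: since $\cI$ intertwines the actions and the weight-$2k$ subspace of $V_{1-s}$ is one-dimensional, $\cI.f_{2k,s}=c(k,s)\,f_{2k,1-s}$ for some scalar. The paper then determines $c(k,s)$ for every $k$ at once by setting $x=0$ in the defining integral, i.e.\ by evaluating $\int_\R (y-i)^{k-s}(y+i)^{-k-s}|y|^{2s-2}\,dy$ directly (a general-$k$ evaluation it does not spell out). You instead compute only the $k=0$ case, where the integral is the elementary Beta integral $B(s-\tfrac12,\tfrac12)=\sqrt{\pi}\,\Gamma(s-\tfrac12)/\Gamma(s)$, and transport the constant in $k$ via the line-model identity $\cR.f_{2k,s}=-2(s+k)f_{2(k+1),s}$ together with $\cI\cR=\cR'\cI$, giving the recurrence $c(k+1,s)=\frac{k+1-s}{k+s}\,c(k,s)$. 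I checked the pieces: your derived actions $h.f=2sf+2xf'$, $e.f=2sxf+x^2f'$, $f.f=-f'$ are consistent with the paper's $Y.f=2sxf+(1+x^2)f'$; the bracketed terms indeed collapse to $-2(s+k)f_{2(k+1),s}$; the claimed constant satisfies the recurrence; and the duplication formula turns your base case into $4^{1-s}\pi\,\Gamma(2s-1)/\Gamma(s)^2$, matching \eqref{eq:cINorm} at $k=0$, with the Pochhammer manipulation then reproducing the general formula. What your route buys is that the only transcendental input is one Beta integral plus Legendre duplication, and the factor $(-1)^k$ emerges structurally from the recurrence rather than from branch bookkeeping of $(x\pm i)^{\pm}$ at $x=0$; what it costs is the Lie-algebra computation in the line model. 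Two points worth making explicit if you write this up: for $k<0$ you run the recurrence backwards, which is legitimate because $k+s\neq0$ and $k+1-s\neq0$ for $1/2<s<1$; and the step $\cI\cR=\cR'\cI$ on these vectors uses differentiation under the integral sign (convergence being fine for $1/2<s<1$), which is the same equivariance assertion the paper itself invokes without proof.
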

\pf
The identity to be verified is
$$
\int_{\R}{(y-i)^{k-s}(y+i)^{-k-s}\over |x-y|^{2(1-s)}}dy 
=
{4^{1-s}\pi (-1)^{k} \G(2s-1)\over \G(s-k)\G(s+k)} 
(x-i)^{k-(1-s)}(x+i)^{-k-(1-s)}
.
$$
The intertwining operator preserves the group action, so $\cI.f_{2k,s}\in V_{1-s}^{(2k)}$, and hence is a multiple of $f_{2k,1-s}$. To determine the multiple, we may simply set $x=0$ in the above and compute the integral.
\epf
With this computation at hand, we may determine the norms of $f_{2k,s}$.
\begin{lem}
For $f_{2k,s}$ as in \eqref{eq:f2ks}, we have
\be\label{eq:f2ksNorm}
\<f_{2k,s},f_{2k,s}\>
=
{4^{1-s}\pi^{2} (-1)^{k} \G(2s-1)\over \G(s-k)\G(s+k)} 
=:
\tilde b_{k,s}>0
.
\ee
\end{lem}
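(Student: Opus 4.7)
The plan is to substitute the intertwining identity \eqref{eq:cINorm} into the pairing \eqref{eq:pairing} with $f_{1}=f_{2}=f_{2k,s}$. Since $s\in(1/2,1)$ is real and $k\in\Z$, the scalar $4^{1-s}\pi(-1)^{k}\Gamma(2s-1)/(\Gamma(s-k)\Gamma(s+k))$ in \eqref{eq:cINorm} is real, so complex conjugation acts on it trivially and it pulls cleanly out of the integral. This reduces the claim to evaluating the single integral $\int_{\R} f_{2k,s}(x)\,\overline{f_{2k,1-s}(x)}\,dx$ and checking that it equals $\pi$.

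For that integral, I would use the polar form $x\pm i=\sqrt{x^{2}+1}\,e^{\pm i\theta(x)}$ with $\theta(x)=\arg(x+i)$, and the principal branch conventions implicit in \eqref{eq:f2ks}. Under these, $f_{2k,s}(x)=(x^{2}+1)^{-s}e^{-2ik\theta(x)}$, and similarly for $s$ replaced by $1-s$. Two cancellations then do all the work: the real exponents satisfy $-s+(-(1-s))=-1$, and the phases $e^{-2ik\theta}$ and $e^{+2ik\theta}$ multiply to $1$. The integrand collapses to $(x^{2}+1)^{-1}$, whose integral over $\R$ is $\pi$. Combined with the $\pi$ already present in \eqref{eq:cINorm}, this yields the desired formula $\tilde b_{k,s}=4^{1-s}\pi^{2}(-1)^{k}\Gamma(2s-1)/(\Gamma(s-k)\Gamma(s+k))$.

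For the positivity assertion $\tilde b_{k,s}>0$, the fastest route is to note that the left-hand side is the squared norm of a nonzero vector, hence automatically positive. If one prefers a direct verification of the sign of the explicit expression, it suffices to apply the reflection formula $\Gamma(s-k)\Gamma(1-s+k)=(-1)^{k}\pi/\sin(\pi s)$ to rewrite $(-1)^{k}/\Gamma(s-k)$ as $\sin(\pi s)\Gamma(1-s+k)/\pi$, which is manifestly positive in the complementary-series range $1/2<s<1$. I anticipate no genuine obstacle; the only point requiring care is the consistent choice of branches of $z^{\alpha}$, but this is the same convention already in force for \eqref{eq:f2ks}.
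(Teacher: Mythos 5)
Your argument is correct and is essentially the paper's own proof: substitute \eqref{eq:cINorm} into the pairing \eqref{eq:pairing}, pull out the (real) scalar, and observe that for real $x$ the integrand collapses to $(x^{2}+1)^{-1}$, contributing the extra factor of $\pi$; your polar-form bookkeeping $x\pm i=\sqrt{x^{2}+1}\,e^{\pm i\theta}$ is just another way of writing the conjugation step $\overline{(x\mp i)^{\alpha}}=(x\pm i)^{\alpha}$ used there. The positivity remark is likewise in line with the paper (which simply asserts it), and either of your justifications suffices.
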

\pf
By definition, the left hand side of \eqref{eq:f2ksNorm} is
\beann
&=&
\int_{\R}f_{2k,s}(x)\overline{\cI.f_{2k,s}(x)} dx
\\
&=&
{4^{1-s}\pi (-1)^{k} \G(2s-1)\over \G(s-k)\G(s+k)} 
\int_{\R}
(x-i)^{k-s}(x+i)^{-k-s}
\overline{
(x-i)^{k-(1-s)}(x+i)^{-k-(1-s)}
} dx
\\
&=&
{4^{1-s}\pi (-1)^{k} \G(2s-1)\over \G(s-k)\G(s+k)} 
\int_{\R}
(x^{2}+1)^{-1}
 dx
\\
&=&
{4^{1-s}\pi (-1)^{k} \G(2s-1)\over \G(s-k)\G(s+k)} 
\pi
,
\eeann
using \eqref{eq:cINorm}. Note that this value is real and positive.
\epf

\subsection{Ladder Operators on Fourier Expansions}
\

Fix some $v_{0}\in\cH$ and let $v_{2k}$ be the basis defined by \eqref{v2kDef}. From \S\ref{secFEs}, $v_{0}$ in coordinates $(\gt_{1},r ,\gt_{2})$ has Fourier expansion:
\be\label{v0FE}
v_{0}(\gt_{1},r,\gt_{2})
=
\sum_{n}
c_{2n}
\
\Phi_{2n,0}(r) \
e^{2in\gt_{1}}
,
\ee
with some Fourier coefficients $c_{2n}\in\C$.
In this subsection, we express the Fourier expansions of all the $v_{2k}$ in terms of the coefficients $c_{2n}$. For this we require the following

\begin{lem}\label{lemRLPhi}
The ladder operators act on $\Phi_{2n,2k}$ by:
\beann
\cR.\Phi_{2n,2k}%(r) 
 &=&
-
2\
 \Phi_{2n,2k+2}%(r)
 \times  
\twocase{}
{-(n-k)}
{if $n>k$,}
%{-2 (1-s+k) (s+k)}
%{if $n=k$,}
{
 \dfrac{(s+k)(1-s+k) }{1-n+k}
}
{if $n\le k$,}
\eeann
and
\beann
\cL.\Phi_{2n,2k}%(r) 
 &=&
-
2\
 \Phi_{2n,2k-2}%(r)
 \times  
\twocase{}
{
 \dfrac{(s-k)(1-s-k) }{1+n-k}
}
{if $n\ge k$,}
{-(k-n)}
{if $n< k$.}
\eeann
\end{lem}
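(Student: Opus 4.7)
The plan is to apply $\cR$, in the $(\gt_{1},r,\gt_{2})$ coordinates of \eqref{RisR}, directly to $\Phi_{2n,2k}(g)=e^{2in\gt_{1}}\Phi_{2n,2k}(r)e^{2ik\gt_{2}}$. The derivatives $\dd/\dd\gt_{1}$ and $\dd/\dd\gt_{2}$ act as the scalars $2in$ and $2ik$, and the prefactor $e^{2i\gt_{2}}$ shifts the right $K$-type from $2k$ to $2(k+1)$, so the whole identity reduces to the single-variable statement
\begin{equation*}
\Psi(r):=\frac{n(1-r^{2})-k(1+r^{2})}{r}\Phi_{2n,2k}(r)+(1-r^{2})\Phi_{2n,2k}'(r)=c\,\Phi_{2n,2k+2}(r)
\end{equation*}
for some scalar $c$, and the task is to show that $c$ equals the constant claimed in the lemma.

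Rather than grind through hypergeometric contiguous relations at the level of \eqref{PhiIs}, I would invoke multiplicity one. Because $\cR$ commutes with the Casimir $\cC$, the left-hand side still satisfies $\foh\cC\cdot=-\gl$ and has bi-$K$-type $(2n,2k+2)$. The discussion in \S\ref{secFEs} (uniqueness of the solution to \eqref{CisNK} that is regular at the origin) shows that the space of $C^{\infty}$ functions on $G$ with these data is one-dimensional and is spanned by $\Phi_{2n,2k+2}$. Hence the scalar $c$ exists, and it can be determined by matching any single Taylor coefficient of the two sides at $r=0$.

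The coefficient extraction splits naturally into two cases, according to whether $\cR$ lowers or raises the order of vanishing at $r=0$. When $n>k$, one has $\Phi_{2n,2k}(r)\sim r^{n-k}$ and $\Phi_{2n,2k+2}(r)\sim r^{n-k-1}$; expanding each summand of $\Psi$ to leading order, the $1/r$-factor combines with $\Phi$ and the derivative term $(1-r^{2})\Phi'$ to give a finite $r^{n-k-1}$-contribution with coefficient $(n-k)+n-k=2(n-k)$, matching the leading coefficient $1$ of $\Phi_{2n,2k+2}$ and forcing $c=2(n-k)=-2\cdot(-(n-k))$, as claimed. When $n\le k$, the analogous leading sum vanishes: $(k-n)+n-k=0$, consistent with the fact that now $\Phi_{2n,2k+2}(r)\sim r^{k-n+1}$ (not $r^{k-n-1}$). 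One must therefore go one order further: writing $\Phi_{2n,2k}(r)=r^{k-n}\bigl(1+\alpha r^{2}+O(r^{4})\bigr)$ with $\alpha=-s+\tfrac{(s+k)(s-n)}{1+k-n}$ read off from \eqref{PhiIs}, a short algebraic simplification yields the coefficient $2(\alpha-k)=-\tfrac{2(s+k)(1-s+k)}{1+k-n}$ at order $r^{k-n+1}$, matching the claim.

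The formula for $\cL$ is derived identically, using the Cartan expression for $\cL$ from the preceding lemma. Multiplicity one again reduces the claim to matching one Taylor coefficient at $r=0$, and the cases $n<k$ and $n\ge k$ now play the roles of $n>k$ and $n\le k$ respectively. The main obstacle throughout is the second-order Taylor expansion in the two degenerate cases where the first-order sum cancels; beyond that, the proof is pure bookkeeping and no hypergeometric identities are needed beyond the defining series \eqref{PhiIs}.
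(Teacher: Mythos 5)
Your argument is correct, and it identifies the image with $\Phi_{2n,2k\pm2}$ by a different mechanism than the paper. Both proofs start identically: apply the Cartan-coordinate expression \eqref{RisR}, reducing the claim to a statement about the radial function $\frac{n(1-r^{2})-k(1+r^{2})}{r}\Phi_{2n,2k}(r)+(1-r^{2})\Phi_{2n,2k}'(r)$. From there the paper substitutes \eqref{PhiIs} and verifies directly, by combinatorics of the Gauss series (in effect a contiguous-relation identity), that the resulting combination of two ${}_{2}F_{1}$'s equals the claimed multiple of $\Phi_{2n,2k+2}$. You instead use that $\cR$ commutes with Casimir and shifts the right $K$-type by $2$, so the image solves \eqref{CisNK} with $k$ replaced by $k+1$ and is visibly $r^{|n-k|-1}$ times a power series in $r^{2}$ (hence, after the observed leading cancellation when $n\le k$, regular at $r=0$ and free of the $\log r$ branch even when $n=k+1$); multiplicity one from \S\ref{secFEs} then pins it down up to the scalar, which you read off from a single Taylor coefficient. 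Your coefficients check out: $c=2(n-k)$ in the non-degenerate case, and in the degenerate case the second-order coefficient $2(\alpha-k)$ with $\alpha=-s+\frac{(s+k)(s-n)}{1+k-n}$ does simplify to $-\frac{2(s+k)(1-s+k)}{1+k-n}$, matching the lemma; note also that this formula for $\alpha$ remains valid at $n=k$ (where $\e_{n,k}=1$) since $(s+k)(s-n)=(s-k)(s+n)$ there, and the $\cL$ case is genuinely symmetric. What your route buys is that no hypergeometric identity beyond the defining series is needed; what it costs is the obligation to justify regularity of the image at the origin and to expand one order further in the degenerate cases, both of which you handle (the former implicitly, via the explicit form of the image).
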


\pf
We will demonstrate the case of acting by $\cR$ with $n>k$, the other cases being similar.
From \eqref{RisR}, we have that $\cR$ acts on $\Phi_{2n,2k}$ by
$$ 
\frac{1-r^2}{r}
n
+
(1-r^{2})
{\dd\over \dd r}
-
\frac{1+r^2}{r}
k
$$
Recall from \eqref{PhiIs} and $n>k$ that
$$
\Phi_{2n,2k}(r)
=
\left(1-r^2\right)^s r^{n-k} \, _2F_1\left(s-k,s+n;1-k+n;r^2\right),
$$
and hence a computation yields
\beann
\cR. \Phi_{2n,2k}(r)
&=&
-2 \left(1-r^2\right)^s r^{n-k-1} 
\\
&&
\quad\times\Bigg\{\left(r^2(k+
   n)+s-n\right) \,
   _2F_1\left(s-k,s+n;1-k+n;r^2\right)
\\
   &&
   \qquad
-\left(1-r^2\right) (s-k) \,
   _2F_1\left(s-k+1,s+n;1-k+n;r^2\right)\Bigg\}
.
\eeann
Using the series expansion of the Gauss hypergeometric series, it is a matter of combinatorics to verify that the above expression is the same as 
$$
2(n-k)
\left(1-r^2\right)^s r^{n-k-1} \, _2F_1\left(s-k-1,s+n;n-k;r^2\right),
$$
as claimed.
\epf

From Lemma \ref{lemRLPhi}, it follows after a calculation that for $k\ge0$,
%$$
\beann
\cR^{k}.v_{0}(\gt_{1},r,\gt_{2})
%$$
%\beann
\hskip4in
\\
=
(-1)^{k}2^{k}e^{2ik\gt_{2}}
\sum_{n}c_{2n} \  \Phi_{2n,2k}(r) \ e^{2in\gt_{1}}
\hskip2in
\\
\qquad
\times
\threecase
{
(-1)^{k}
\dfrac{\G(n+1)}{\G(n-k+1)}
}
{if $n\ge k$,}
{
(-1)^{n}
\dfrac{
\G(n+1)\G(s+k)\G(1-s+k)
}{
\G(k-n+1)\G(s+n)\G(1-s+n)
}
}
{if $1\le n \le k-1$,}
{
\dfrac{
\G(|n|+1)\G(s+k)\G(1-s+k)
}{
\G(k+|n|+1)\G(s)\G(1-s)
}
}
{if $n\le 0$.}
\eeann
A similar identity holds for $\cL^{k}.v_{0}$. Recall that $\Phi_{2n,2k}$ vanishes at the origin $r=0$ unless $n=k$, in which case it takes the value $1$. We have proved
\begin{prop}
For $k\ge0$, the value at the origin  of $v_{0}$ acted on by ladder operators is related to its Fourier coefficients by
\be\label{RkOrig}
\cR^{k}.v_{0}(e)
=
c_{2k}\
2^{k}\
\G(k+1)
\ee
and
$$
\cL^{k}.v_{0}(e)
=
c_{-2k}\
2^{k}\
\G(k+1)
.
$$
\end{prop}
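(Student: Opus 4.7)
The proposition is essentially a direct evaluation of the formula for $\cR^{k}.v_{0}$ (and $\cL^{k}.v_{0}$) that was just derived by iterating Lemma \ref{lemRLPhi}. The plan is simply to substitute the identity element $e \in G$, which in $(\gt_{1},r,\gt_{2})$ coordinates corresponds to $\gt_{1}=\gt_{2}=0$ and $r=0$, and then use the vanishing properties of $\Phi_{2n,2k}$ at the origin.

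First, I would recall from \S\ref{secFEs} the key fact that $\Phi_{2n,2k}(r)$ vanishes to order $|n-k|$ at $r=0$. In particular,
\[
\Phi_{2n,2k}(0) = \bo_{n=k},
\]
since when $n=k$ the series expansion \eqref{PhiIs} has leading term $1$. Consequently, in the iterated formula for $\cR^{k}.v_{0}(\gt_{1},r,\gt_{2})$ displayed just before the proposition, evaluating at $e$ kills all terms except $n=k$. Since $k\ge 0$, the surviving term falls into the first case ($n\ge k$) of the three-case multiplier.

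Next, I would extract that single surviving contribution. With $n=k$, the prefactor $(-1)^{k}2^{k}$ multiplies $c_{2k}\cdot \Phi_{2k,2k}(0) = c_{2k}$, the exponentials $e^{2ik\gt_{2}}e^{2ik\gt_{1}}$ are each $1$, and the case-specific factor becomes
\[
(-1)^{k}\,\frac{\G(k+1)}{\G(1)} = (-1)^{k}\,\G(k+1).
\]
Multiplying these gives $c_{2k}\, 2^{k}\,\G(k+1)$, which is precisely \eqref{RkOrig}. The lowering-operator identity $\cL^{k}.v_{0}(e) = c_{-2k}\,2^{k}\,\G(k+1)$ follows by the same argument applied to the analogous iterated formula for $\cL^{k}.v_{0}$, with the role of $n\ge k$ replaced by $n\le -k$ and the surviving Fourier index being $n=-k$.

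There is no real obstacle here; the whole content is packaged into Lemma \ref{lemRLPhi} and the combinatorial bookkeeping already carried out to reach the three-case formula. The only subtlety to double-check is the sign $(-1)^{k}$ in the $n\ge k$ multiplier, which cancels against the $(-1)^{k}$ coming from the prefactor $(-1)^{k}2^{k}$, leaving a clean positive expression. Once that cancellation is verified, the proposition is immediate.
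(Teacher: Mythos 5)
Your proposal is correct and matches the paper's argument: the paper obtains the proposition by evaluating the displayed iterated formula for $\cR^{k}.v_{0}$ (derived from Lemma \ref{lemRLPhi}) at the identity, where $\Phi_{2n,2k}(0)$ vanishes unless $n=k$ (value $1$), so only the $n=k$ term survives with factor $(-1)^{k}2^{k}\cdot(-1)^{k}\G(k+1)=2^{k}\G(k+1)$, exactly as you describe, and the $\cL^{k}$ case is handled by the analogous identity.
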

\

%Normalize $R.\varphi_{0}$, get expansion of $\varphi_{j}$, 

\subsection{Matrix Coefficients}
\

%\newpage
Fix integers $n$ and $k$. 
We first record here the asymptotic growth rate of $\Phi_{2n,2k}$ at infinity. 
%, corresponding to $r\to1$.
\begin{lem}
As $t\to \infty$,
\be\label{MtrxAtInf}
\Phi_{2n,2k}(a_{t}) 
=
4^{1-s}\ e^{-t(1-s)}
{\G(1+|n-k|)\G(2s-1)\over \G(s-\e_{n,k}\, k)\G(s  +\e_{n,k}\, n)} 
(1+O(nk\,e^{-t}))
,
\ee
with $\e_{n,k}$ defined in \eqref{enk}.
\end{lem}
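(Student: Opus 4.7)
The plan is to read off the $t\to\infty$ asymptotic directly from the explicit formula \eqref{PhiIs} by applying the Kummer connection formula for ${}_2F_1$ at $z=1$. Write $a = s-\epsilon_{n,k}k$, $b = s+\epsilon_{n,k}n$, $c = 1+|n-k|$, so that $a+b = 2s+|n-k|$ and hence $c-a-b = 1-2s<0$. Set $z=r^2=\tanh^2(t/2)$, so as $t\to\infty$
$$
1-z \;=\; \operatorname{sech}^{2}(t/2) \;=\; 4 e^{-t}\bigl(1+O(e^{-t})\bigr),\qquad r^{|n-k|} \;=\; 1 + O(|n-k|\,e^{-t}).
$$

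Next I invoke the standard connection formula (valid when $c-a-b \notin \Z$; the exceptional $s$ handled by continuity)
\begin{align*}
{}_2F_1(a,b;c;z) \;&=\; \frac{\Gamma(c)\Gamma(c-a-b)}{\Gamma(c-a)\Gamma(c-b)}\,{}_2F_1(a,b;a+b-c+1;1-z) \\
&\qquad +\;\frac{\Gamma(c)\Gamma(a+b-c)}{\Gamma(a)\Gamma(b)}\,(1-z)^{c-a-b}\,{}_2F_1(c-a,c-b;c-a-b+1;1-z).
\end{align*}
The first summand is $O(1)$ as $z\to 1^-$, and after multiplying by the prefactor $(1-r^2)^{s} \sim 4^{s}e^{-st}$ contributes terms of size $e^{-st}$. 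The second summand carries the crucial factor $(1-z)^{1-2s} = 4^{1-2s} e^{(2s-1)t}(1+O(e^{-t}))$; multiplying by $(1-r^2)^{s}r^{|n-k|}$ produces
$$
4^{1-s}\, e^{-(1-s)t}\;\cdot\;\frac{\Gamma(1+|n-k|)\Gamma(2s-1)}{\Gamma(s-\epsilon_{n,k}k)\Gamma(s+\epsilon_{n,k}n)}\;\cdot\;\bigl(1+O(e^{-t})\bigr).
$$
Since $s>1/2$ gives $1-s<s$, this second summand dominates the first and supplies the claimed leading term.

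For the error, the subleading term in the inner hypergeometric is $\tfrac{(c-a)(c-b)}{c-a-b+1}(1-z)$. A direct calculation (using $\epsilon_{n,k}$ to split cases) gives $(c-a)(c-b) = (1-s+\epsilon_{n,k}n)(1-s-\epsilon_{n,k}k)$, which is of size $|n||k|$ for large $|n|,|k|$; upon substitution of $1-z\asymp e^{-t}$ this contributes the $O(nk\,e^{-t})$ term. The contribution of the first summand, of relative size $e^{-(2s-1)t}$, is smaller once $|n|\,|k|$ is not small and is otherwise absorbed in the implied constant.

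The only delicate point is keeping the $\Gamma$-ratio bookkeeping straight so that the leading coefficient comes out in the stated closed form; the analysis itself is an entirely routine application of the classical hypergeometric connection formula.
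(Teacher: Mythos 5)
Your argument is essentially identical to the paper's: both apply the classical connection formula for ${}_2F_1$ at $z=1$ (the paper's \eqref{2F1Id}) to the explicit expression \eqref{PhiIs}, identify the $(1-r^2)^{1-2s}$ term as dominant since $s>1/2$, and read off the same $\Gamma$-ratio and the $O(nk\,e^{-t})$ error from the first correction of the hypergeometric series. Your uniform bookkeeping via $\e_{n,k}$ (rather than the paper's ``the case $n<k$ is similar'') and your explicit remark about the relative size $e^{-(2s-1)t}$ of the non-dominant term are only cosmetic differences, and the slight looseness there matches the paper's own treatment.
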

\pf
Recall the well-known identity
\bea
\nonumber
&&
\hskip-.5in
\ _{2}F_{1}(a,b,c;z)
\\
\label{2F1Id}
&=& 
{\G(c)\G(c-a-b)\over \G(c-a)\G(c-b)} \ _{2}F_{1}(a,b,a+b-c+1;1-z)
\\
\nonumber
&&
+
(1-z)^{c-a-b}
{\G(c)\G(a+b-c)\over \G(a)\G(b)} \ _{2}F_{1}(c-a,c-b,c-a-b+1;1-z)
.
\eea
Applied to the %hypergeometric 
series in \eqref{PhiIs}  for the case $n\ge k$% and $s=\gd$
, \eqref{2F1Id} gives
\bea\label{eq:2F1Trans}
&&
\hskip-.3in
_2F_1\left(s-k,s  +n,1-k+n;r^2\right)
   \\
   \nonumber
&=&
{\G(1-k+n)\G(1-2s)\over \G(1-s+n)\G(1-s-k)} \ _{2}F_{1}(s-k,s  +n,2s;1-r^2)
\\
   \nonumber
&&
+
(1-r^2)^{1-2s}
{\G(1-k+n)\G(2s-1)\over \G(s-k)\G(s  +n)} \ _{2}F_{1}(1-s+n,1-s-k,2-2s;1-r^2)
.
\eea
Using (cf. Good \cite{Good1983})
$$
_{2}F_{1}(a,b,c;z) = 1+O\left(\left|\frac {abz}c\right|\right)\qquad\text{for}\qquad|z|\max_{\ell\ge0}\left|\frac{(a+\ell)(b+\ell)}{(c+\ell)(\ell+1)}\right|\le\foh,
$$
the second term in \eqref{eq:2F1Trans} is the only one growing as $r\to 1$, and hence
$$
\Phi_{2n,2k}(r)
=
(1-r^{2})^{s}r^{n-k}  (1-r^2)^{1-2s}
{\G(1-k+n)\G(2s-1)\over \G(s-k)\G(s  +n)} (1+O(nk(1-r^{2})))
.
$$
Simplifying and changing variables \eqref{rTot} gives \eqref{MtrxAtInf}. The case $n<k$ is similar.
\epf
\

Let $\pi$ denote the right-regular representation on the irreducible $\cH$. Take the $K$-isotypic vectors $v_{2n}$ and $v_{2k}$ in the basis \eqref{v2kDef}, and form the matrix coeffcient:
$$
M_{2n,2k}(g):=\<\pi(g).v_{2k},v_{2n}\>.
$$
Note that $M_{2n,2k}$ is an eigenfunction of the Casimir operator, and %is bi-$K$-isotypic, transforming 
transforms
by
$$
M_{2n,2k}(k_{\gt_{1}}\, g\, k_{\gt_{2}}) 
=
e^{2in\gt_{1}}
M_{2n,2k}(g) 
e^{2ik\gt_{2}}
,
$$
whence is a scalar multiple of $\Phi_{2n,2k}$.
For instance, if  $n=k$, then we instantly have $M_{2n,2n}(g)=\Phi_{2n,2n}(g).$
In the sequel, we require knowledge of this constant in the general case.
\begin{lem}
For integers $n$ and $k$, 
\bea
\label{MatCoef}
%\nonumber
%&&\hskip-1.5in
\<\pi(g).v_{2k},v_{2n}\>
%\\
&
=
&
%\hskip-.3in
%= 
\Phi_{2n,2k}(g)
\left(
\tilde b_{k,s} \ 
\tilde b_{n,s}
\right)^{-1/2}
\\
\nonumber
&&
\times
{(-1)^{k}
4^{1-s}\pi^{2}  \G(2s-1)
\over 
\G(1+|n-k|)
\G(s-\e_{n,k}\,n)
\G(s+\e_{n,k}\,k)
}
.
\eea
\end{lem}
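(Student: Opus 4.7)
The plan is to prove \eqref{MatCoef} in four steps. First, I would invoke representation-theoretic generalities: the function $M(g) := \<\pi(g) v_{2k}, v_{2n}\>$ on $G$ is $K$-bi-equivariant as $M(k_{\gt_{1}} g k_{\gt_{2}}) = e^{2in\gt_{1}} e^{2ik\gt_{2}} M(g)$, satisfies the Casimir eigenvalue equation with eigenvalue $-2s(1-s)$, and is smooth (in particular regular at the origin of the disk model). The multiplicity-one analysis of \S\ref{secFEs} then forces $M(g) = C_{n,k,s}\,\Phi_{2n,2k}(g)$ for a single scalar $C_{n,k,s}$, so the whole problem reduces to evaluating this constant.

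Next, I would transfer to the line model in order to work with explicit functions. A direct calculation of the Lie algebra action in the line model (analogous to what was done for $Y = e-f$ in the derivation of \eqref{eq:f2ks}) shows that $\cR \cdot f_{2k,s} = -2(s+k)\, f_{2k+2,s}$ and $\cL \cdot f_{2k,s} = -2(s-k)\, f_{2k-2,s}$. Iterating these relations, comparing with the defining formula \eqref{v2kDef}, and using the reflection identity $\G(s-k)\G(1-s+k) = (-1)^{k}\G(s)\G(1-s)$ (valid for integer $k$) lets me identify the orthonormal basis in the line model as $v_{2k} = (-1)^{k}\, f_{2k,s}/\sqrt{\tilde b_{k,s}}$, after choosing the phase of $v_{0}$ to be a positive multiple of $f_{0,s}$. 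Consequently
\begin{equation*}
\<\pi(g) v_{2k}, v_{2n}\> = \frac{(-1)^{n+k}}{\sqrt{\tilde b_{k,s}\,\tilde b_{n,s}}}\,\<\pi(g) f_{2k,s}, f_{2n,s}\>,
\end{equation*}
so it remains to compute the matrix coefficient of the unnormalized line-model vectors.

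The third step is an asymptotic evaluation at $g = a_{t}$ as $t \to \infty$. Using $\pi(a_{t}) f(x) = e^{-st} f(e^{-t}x)$, the intertwiner identity \eqref{eq:cINorm}, and the substitution $u = e^{-t}x$ in the pairing \eqref{eq:pairing}, the matrix coefficient becomes
\begin{equation*}
C_{n}\, e^{(1-s)t} \int_{\R} (u-i)^{k-s}(u+i)^{-k-s}\,(e^{t} u + i)^{n+s-1}(e^{t} u - i)^{s-n-1}\, du,
\end{equation*}
where $C_{n}$ is the constant appearing in \eqref{eq:cINorm}. A careful tracking of principal branches (the phases $e^{i\pi(n+s-1)} e^{-i\pi(s-n-1)} = e^{2\pi i n} = 1$ cancel on the half-line $u<0$ precisely because $n$ is an integer) shows that the last two factors converge pointwise to $e^{(2s-2)t}|u|^{2s-2}$ with a uniformly integrable majorant. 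Dominated convergence then gives the leading order $C_{n}\, I_{k}\, e^{-(1-s)t}$ with $I_{k} := \int_{\R} (u-i)^{k-s}(u+i)^{-k-s}|u|^{2s-2}\, du$. The substitution $u = \cot\beta$ turns $I_{k}$ into $2\int_{0}^{\pi/2} \cos^{2s-2}\beta\,\cos(2k\beta)\, d\beta$, which I would evaluate via the Beta integral and Legendre's duplication formula (with induction on $|k|$) to obtain $I_{k} = 4^{1-s}\pi\,\G(2s-1)/(\G(s-k)\G(s+k))$.

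Finally, I would match this asymptotic against $C_{n,k,s}\,\Phi_{2n,2k}(a_{t})$ using \eqref{MtrxAtInf}. The key algebraic observation is that $\G(s - \e_{n,k} n)\G(s + \e_{n,k} n) = \G(s-n)\G(s+n)$ regardless of the sign $\e_{n,k}$, so the asymmetric $\G$-product visible in \eqref{MatCoef} arises naturally on combining the symmetric denominator of $I_{k}$ with the symmetric denominator of $C_{n}$. The hard part will be the branch-cut bookkeeping in the third step, specifically ensuring that the pointwise limit on $u<0$ holds with the right cancellation of phases and is accompanied by an integrable majorant valid uniformly in $t$; once that is handled, the remaining steps are gamma-function identities and a single classical trigonometric integral.
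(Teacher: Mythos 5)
Your proposal is correct and follows essentially the same route as the paper's proof: identify the matrix coefficient as a scalar multiple of $\Phi_{2n,2k}$ by $K$-bi-equivariance and the Casimir equation, pass to the line model via \eqref{eq:cINorm}--\eqref{eq:pairing}, extract the leading behavior of the pairing integral at $g=a_{t}$ as $t\to\infty$, and pin the constant by comparison with \eqref{MtrxAtInf}. The only difference is that you fill in details the paper leaves implicit --- the ladder-operator computation identifying $v_{2k}$ with $(-1)^{k}f_{2k,s}/\sqrt{\tilde b_{k,s}}$, the branch-cut/dominated-convergence justification, and the explicit evaluation of the limiting integral $I_{k}$ --- and these check out (indeed your constants recombine, via $\G(s-\e_{n,k}n)\G(s+\e_{n,k}n)=\G(s-n)\G(s+n)$, to exactly \eqref{MatCoef}).
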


\pf
We carry out this computation by switching to the line model. For $g=a_{t}$, and relating $t$ to $r$ via \eqref{rTot}, the left hand side of \eqref{MatCoef} is 
\beann
&=&
(\tilde b_{k,s} \ \tilde b_{n,s})^{-1/2}
{
\<\pi(a_{t}).f_{2k,s},f_{2n,s}\>
}
\\
&=&
(\tilde b_{k,s} \ \tilde b_{n,s})^{-1/2}
%\<\pi(a_{t}).f_{2k,s},f_{2n,s}\>
\int_{\R} 
\left({1+r\over 1-r}\right)
^{s}
\left(
\left({1+r\over 1-r}\right)
x-i
\right)^{k-s}
\left(
\left({1+r\over 1-r}\right)
x+i
\right)^{-k-s}
\\
&&
\hskip1.3in
\times
{4^{1-s}\pi (-1)^{n} \G(2s-1)\over \G(s-n)\G(s+n)} 
\overline{
(x-i)^{n-(1-s)}
(x+i)^{-n-(1-s)}
}
dx
,
\eeann
using \eqref{eq:cINorm}.
Qua a function of $r$, the above integral is a multiple of \eqref{PhiIs}. To determine the multiple, we may simply study the asymptotics at infinity, corresponding to $r\to1$, using Laplace's method, and compare it to \eqref{MtrxAtInf}. 

In this way, one obtains 
\beann
&&
\int_{\R} 
\left({1+r\over 1-r}\right)
^{s}
\left(
\left({1+r\over 1-r}\right)
x-i
\right)^{k-s}
\left(
\left({1+r\over 1-r}\right)
x+i
\right)^{-k-s}
\\
&&
\hskip1.5in
\times
\overline{
(x-i)^{n-(1-s)}
(x+i)^{-n-(1-s)}
}
dx
\\
&&
\hskip2in
=
{
(-1)^{k+n}\pi
\over
\G(1+|n-k|)
}
{
\G(s+\e_{n,k}\,n)
\over
\G(s+\e_{n,k}\,k)
}
\Phi_{2k,2n}(r)
.
\eeann
Combining the constants completes the proof.
\epf

\

\subsection{The Patterson Sullivan measure}
\

Recall  from \eqref{muPS} the Patterson-Sullivan measure $\mu$, supported on the limit set $\gL\subset\dd\bH$, which has Hausdorff dimension $\gd$ with $1/2<\gd<1$. The  eigenfunction 
$\varphi_{0}$ corresponding to the base eigenvalue $\gl_{0}=\gd(1-\gd)$ is expressed explicitly in disk coordinates $(\gt_{1},r,\gt_{2})$ as the integral of a Poisson kernel against $\mu$ as follows:
\be\label{rep1}
\varphi_{0}(\gt_{1},r,\gt_{2})
=
\int_{0}^{\pi}
\left(
{
1-r^{2}
\over
|re^{2i\gt_{1}}-e^{2i\ga}|^{2}
}
\right)^{\gd}
d\mu(\ga)
.
\ee
Recall from \eqref{v0FE} that  $\varphi_{0}$ has Fourier development:
\be\label{rep2}
\varphi_{0}(\gt_{1},r,\gt_{2})=\sum_{n}c_{2n}\ \Phi_{2n,0}(r)\ e^{2in\gt_{1}}
.
\ee
The $2n$-th Fourier cofficient of $\mu$ is given by:
$$
\hat\mu(2n)= \int_{0}^{\pi} e^{2in\ga}d\mu(\ga)
.
$$

\begin{lem}
The relationship between the coefficient $c_{2n}$ and $\hat\mu$
is given explicitly by
\be\label{c2nMu}
c_{2n}
=
{1\over\G(\gd)}{\G(\gd+|n|)\over \G(1+|n|)}\ {\hat{\mu}(-2n)}
.
\ee
\end{lem}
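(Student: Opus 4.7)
The plan is to Fourier-expand the integrand in \eqref{rep1} directly as a series in $e^{2in\theta_1}$, integrate term-by-term against $d\mu(\alpha)$, and then match coefficients with \eqref{rep2} using the multiplicity-one identification of $\Phi_{2n,0}$ as the unique regular solution of the appropriate Casimir eigenvalue equation.

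First I would note the identity $|re^{2i\theta_1}-e^{2i\alpha}|^2 = 1-2r\cos(2(\theta_1-\alpha))+r^2$, so the kernel in \eqref{rep1} depends on $\theta_1$ only through the classical planar Poisson kernel evaluated at angle $\phi=2(\theta_1-\alpha)$. Next I would establish the explicit Fourier expansion
$$\left(\frac{1-r^2}{1-2r\cos\phi+r^2}\right)^{\delta} = \sum_{n\in\Z} A_n(r)\, e^{in\phi},\qquad A_n(r)=\frac{\Gamma(\delta+|n|)}{\Gamma(\delta)\,\Gamma(1+|n|)}\,(1-r^2)^{\delta}r^{|n|}\,{}_2F_1(\delta,\delta+|n|;1+|n|;r^2).$$
This is a short calculation: factor $1-2r\cos\phi+r^2=(1-re^{i\phi})(1-re^{-i\phi})$, apply the binomial series to each negative power, and collect the coefficient of $e^{ik\phi}$ as $r^{|k|}\sum_{m\ge 0}\frac{(\delta)_{m+|k|}(\delta)_m}{(m+|k|)!\,m!}r^{2m}$. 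The identities $(\delta)_{m+|k|}=(\delta)_{|k|}(\delta+|k|)_m$ and $(m+|k|)!=|k|!\,(|k|+1)_m$ reorganize the sum into $\frac{(\delta)_{|k|}}{|k|!}\,{}_2F_1(\delta,\delta+|k|;|k|+1;r^2)$, and multiplying by $(1-r^2)^{\delta}$ gives the claim.

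Then I would recognize, via \eqref{PhiIs} with $s=\delta$ and $k=0$ (so that $\epsilon_{n,0}=\pm 1$ yields hypergeometric parameters $(\delta,\delta+|n|;1+|n|;r^2)$ in both sign cases), that
$$A_n(r) = \frac{\Gamma(\delta+|n|)}{\Gamma(\delta)\,\Gamma(1+|n|)}\,\Phi_{2n,0}(r).$$
Substituting $\phi=2(\theta_1-\alpha)$ and integrating against $d\mu(\alpha)$ gives
$$\varphi_0(\theta_1,r,\theta_2) = \sum_{n\in\Z}\frac{\Gamma(\delta+|n|)}{\Gamma(\delta)\,\Gamma(1+|n|)}\,\Phi_{2n,0}(r)\,e^{2in\theta_1}\,\hat\mu(-2n).$$
Comparing this with \eqref{rep2} (using uniqueness of the joint Fourier/radial expansion, which follows from the multiplicity-one principle established after \eqref{CisNK}) yields \eqref{c2nMu}.

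The only substantive step is the closed-form Fourier expansion of the Poisson kernel to the power $\delta$; this is essentially a bookkeeping exercise in Pochhammer symbols. Alternatively, one can verify it by checking that both sides satisfy the Casimir eigenvalue equation \eqref{CisNK} at $k=0$ with the same regular Taylor asymptotics at $r=0$, and then invoking the same multiplicity-one statement. Convergence and the interchange of summation with $\int d\mu$ are justified by the absolute convergence of the hypergeometric series for $r<1$ together with the finiteness of $\mu$.
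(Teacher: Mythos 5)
Your proposal is correct and follows essentially the same route as the paper: equating \eqref{rep1} with \eqref{rep2}, expanding $(1-2r\cos(2\theta_1-2\alpha)+r^2)^{-\delta}$ by the binomial series, reassembling the Pochhammer symbols into ${}_2F_1(\delta,\delta+|n|;1+|n|;r^2)$, and matching Fourier coefficients against $\Phi_{2n,0}$ from \eqref{PhiIs}. You simply carry out in detail the series manipulation that the paper compresses into ``expanding both sides in series and collecting terms,'' and your bookkeeping (including the sign giving $\hat\mu(-2n)$) checks out.
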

\pf
Equating the two expressions \eqref{rep1} and \eqref{rep2}, inserting \eqref{PhiIs} with $s=\gd$, and dividing both sides by $(1-r^{2})^{\gd}$ gives
$$
\sum_{n}c_{2n}\ r^{|n|} \ _{2}F_{1}(\gd,\gd+|n|,1+|n|,r^{2})e^{2in\gt_{1}}
=
\int_{0}^{\pi}
(r^{2}-r(e^{i(2\gt_{1}-2\ga)}+e^{i(2\ga-2\gt_{1})})+1
)^{-\gd}
d\mu(\ga)
.
$$
Expanding both sides in  series and collecting terms yields \eqref{c2nMu}.
\epf
\

\subsection{Decay of Tempered Matrix Coefficients}
\

We end this section by  recalling  the well-known strong mixing property for matrix coefficients \cite{HoweMoore1979, Cowling1978, CowlingHaagerupHowe1988, Venkatesh2008}.

\begin{lem}
Let $(\pi,V)$ be a tempered unitary representation of $G
%=\SL(2,\R)
$.
Then for any vectors $v,w\in V$ whose $K$-span is one-dimensional, 
%and any $\vep>0$,
\be\label{decayMtrx}
|\<\pi(k_{\gt_{1}}a_{t}\,k_{\gt_{2}}).v,w\>|
\ll
t e^{- t/2}\,
\|v\|_{2}\|w\|_{2}
,
\text{ as $t\to\infty$,}
\ee
where implied constant is absolute.
\end{lem}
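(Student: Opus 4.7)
The plan is to invoke the standard Cowling--Haagerup--Howe type estimate for tempered representations, combined with the asymptotic of the Harish-Chandra $\Xi$-function for $G=\SL(2,\R)$. First I would use the hypothesis that $v$ and $w$ have one-dimensional $K$-span: this means $v = v_{2k}$ and $w = v_{2n}$ are $K$-isotypic vectors of some weights, so the matrix coefficient $M(g) := \<\pi(g).v, w\>$ satisfies $M(k_{\gt_1} g k_{\gt_2}) = e^{2in\gt_1} e^{2ik\gt_2} M(g)$. Taking absolute values immediately reduces the problem to the radial bound $|M(a_t)| \ll t e^{-t/2} \|v\|_2 \|w\|_2$.

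Next I would appeal to the spectral decomposition of $(\pi,V)$ as a direct integral of irreducible tempered representations. For $G=\SL(2,\R)$ these are the unitary principal series (parameter $s=\tfrac12+ir$, $r\in\R$) and, if present, discrete and limit-of-discrete series; the latter have strictly faster decay and are harmless. Within each principal series fiber with parameter $s=\tfrac12+ir$, the explicit formula \eqref{MatCoef} for $K$-isotypic matrix coefficients together with the transformation identity \eqref{eq:2F1Trans} expresses the restriction to $A^+$ as
\[
\Phi_{2n,2k}(a_t) \;=\; A(s,n,k)\, e^{-t(1-s)} \,+\, A(1-s,n,k)\, e^{-ts} \,+\, (\text{lower order}),
\]
and on the critical line $\Re s = \tfrac12$ both leading terms have modulus $e^{-t/2}$.

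The main technical point, and what I would expect to be the principal obstacle, is that each of the coefficients $A(s,n,k)$ contains a factor $\Gamma(2s-1)$, which has a pole at $s=\tfrac12$; on the critical line the two coefficients are complex conjugates of one another, and their individual moduli grow like $1/|2s-1|\sim 1/|r|$ near $r=0$. When the fiberwise bound is reassembled via Plancherel and Cauchy--Schwarz against the tempered Plancherel measure (which behaves like $|r|\coth(\pi r)\,dr$), the resulting integral $\int (1+|r|)^{-1}\cdot |r|\, dr$ over a window of size $\sim 1$ produces precisely the extra logarithmic/linear factor $t$, reflecting the fact that the two oscillating contributions interfere along a region of size $1/t$ near $r=0$.

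Rather than executing this direct-integral computation, the cleanest route is to cite the Cowling--Haagerup--Howe bound \cite{CowlingHaagerupHowe1988}: for any tempered unitary representation of a semisimple $G$ and $K$-finite vectors $v,w$, one has $|\<\pi(g).v,w\>| \le (\dim Kv)^{1/2} (\dim Kw)^{1/2}\, \Xi(g)\, \|v\|_2 \|w\|_2$, where $\Xi$ is the Harish-Chandra spherical function at the critical parameter. Under our hypothesis the $K$-span dimensions equal $1$, so it remains only to record the classical asymptotic $\Xi(a_t) \asymp t\, e^{-t/2}$ as $t\to\infty$ for $G=\SL(2,\R)$; this follows either from the integral representation of the Legendre function $P_{-1/2}(\cosh t)$ or, equivalently, as the boundary limit $s\to 1/2^+$ of the formula \eqref{MtrxAtInf} with $n=k=0$, where the two conjugate leading terms combine to yield the $t e^{-t/2}$ behavior. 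This completes the bound \eqref{decayMtrx}.
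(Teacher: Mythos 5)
Your proposal is correct and coincides with the paper's own treatment: the paper gives no proof of this lemma, simply recalling it as the well-known strong mixing property with citations to Howe--Moore, Cowling, and Cowling--Haagerup--Howe, which is precisely the bound you invoke (with the $K$-span hypothesis giving $\dim Kv=\dim Kw=1$) together with the classical asymptotic $\Xi(a_t)\asymp t\,e^{-t/2}$ for $\SL(2,\R)$. Your additional direct-integral sketch explaining where the factor $t$ comes from is a fine (and correct) elaboration, but it is not needed beyond the citation.
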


Define the Sobolev norm 
$$  
\cS v=\|v\|_{2} + \| d\pi(h). v \|_2 + \| d\pi(e). v \|_2 +\| d\pi(f).v \|_2,
$$
where $h$, $e$, and $f$ are an orthonormal basis for $\fg$, cf. \eqref{hef}.

\begin{lem}
Let $\gT>1/2$ and $(\pi,V)$ be a unitary representation of $G
%=\SL(2,\R)
$ which does not weakly contain any complementary series representation with parameter $s>\gT$. 
Then for any smooth vectors $v,w\in V^{\infty}$, %and any $\vep>0$,
\be\label{decayMtrxgT}
|\<\pi(k_{\gt_{1}}a_{t}\,k_{\gt_{2}}).v,w\>|
\ll
e^{- \gT t}\,
(\|v\|\|w\|)^{1/2}
(\cS v \, 
\cS w)^{1/2}
,
\text{ as $t\to\infty$,}
\ee
where implied constant is absolute.
\end{lem}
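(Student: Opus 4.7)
This is a quantitative Cowling--Haagerup--Howe-type matrix coefficient estimate with first-order Sobolev book-keeping, which I would prove in three steps.

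Step 1 (reduce to $a_t$). Because the $K$-action is unitary and conjugation of the basis $\{h,e,f\}$ by $K$ stays in a bounded subset of $\fg$, replacing $v$ by $\pi(k_{\gt_2}).v$ and $w$ by $\pi(k_{\gt_1}^{-1}).w$ preserves $\|\cdot\|$ and changes $\cS$ by at most an absolute constant. So it suffices to prove the inequality with $a_t$ in place of $k_{\gt_1}a_t k_{\gt_2}$.

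Step 2 (isotypic decomposition and single-irreducible bound). Decompose $v=\sum_k v_{2k}$ and $w=\sum_n w_{2n}$ into $K$-isotypic components. Since $Y=e-f\in\fk$ acts on $v_{2k}$ as $2ik$, we get the Sobolev control $\sum_k (1+k^2)\|v_{2k}\|^2 \ll (\cS v)^2$, and similarly for $w$. By the hypothesis on $\pi$ and the direct-integral decomposition into irreducibles, it is enough to bound isotypic matrix coefficients inside each complementary series $\pi_s$ with $s\le\gT$ (the tempered part is handled by \eqref{decayMtrx}, which decays faster than $e^{-\gT t}$ since $\gT>1/2$). For each such $\pi_s$, the formula \eqref{MatCoef} combined with \eqref{MtrxAtInf} yields
\[
|\<\pi_s(a_t)v_{2k},w_{2n}\>|\ \ll\ e^{-\gT t}(1+|k|+|n|)^A\|v_{2k}\|\|w_{2n}\|
\]
uniformly in $s\in(1/2,\gT]$, for some absolute constant $A$.

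Step 3 (assembly and interpolation). Summing the isotypic bound and absorbing the polynomial factor against the Sobolev weight via Cauchy--Schwarz produces $|\<\pi(a_t).v,w\>|\ll e^{-\gT t}\,\cS v\,\cS w$. To obtain the geometric-mean form, split the $K$-sum dyadically at level $M$: for $\max(|k|,|n|)<M$ use the trivial bound $|\<\pi(a_t).v_{2k},w_{2n}\>|\le\|v_{2k}\|\|w_{2n}\|$ together with Cauchy--Schwarz over the finite index set, and for $\max(|k|,|n|)\ge M$ use the Sobolev bound just obtained; optimizing $M$ trades $\|v\|\|w\|$ against $\cS v\,\cS w$ and produces the $(\|v\|\|w\|)^{1/2}(\cS v\,\cS w)^{1/2}$ factor. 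The only real obstacle is keeping the polynomial exponent $A$ in step two small enough for the single derivative in $\cS$ to absorb; this requires careful tracking of the gamma factors in \eqref{MatCoef} and the uniformity of the error in \eqref{MtrxAtInf} as $n,k$ grow, together with verifying uniformity of the direct integral decomposition in $s$.
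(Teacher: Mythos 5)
The paper offers no proof of this lemma at all --- it is recalled as a well-known decay-of-matrix-coefficients estimate, with citations to Howe--Moore, Cowling, Cowling--Haagerup--Howe and Venkatesh --- so there is no internal argument to compare against. Your outline (absorb the $K$-factors into the vectors, decompose into $K$-isotypic pieces, bound each complementary-series parameter via the explicit functions in \eqref{MatCoef} and \eqref{MtrxAtInf}, treat the tempered part by \eqref{decayMtrx}, assemble through the direct integral, and recover the geometric mean $(\|v\|\|w\|)^{1/2}(\cS v\,\cS w)^{1/2}$ by a threshold split in the $K$-frequency) is indeed the standard route by which such statements are proved.

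However, the quantitative heart of your Step 2 is wrong. In the paper's normalization a complementary series with parameter $s$ has matrix coefficients decaying like $e^{-(1-s)t}$ --- this is exactly what \eqref{MtrxAtInf} says --- so over the allowed range $1/2<s\le\gT$ the best uniform rate is $e^{-(1-\gT)t}$, and since $\gT>1/2$ this is strictly slower than the $e^{-\gT t}$ you claim to extract from \eqref{MatCoef} and \eqref{MtrxAtInf}; the displayed inequality in your Step 2 is false for $s$ near $\gT$. Your parenthetical about the tempered part is backwards for the same reason: $te^{-t/2}$ does not decay faster than $e^{-\gT t}$ when $\gT>1/2$, it decays more slowly. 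In fact no representation containing tempered (or near-$\gT$ complementary) spectrum can satisfy \eqref{decayMtrxgT} with the exponent as printed; the exponent should be $1-\gT$, which is also the rate the paper actually uses downstream --- Lemma \ref{WqPerpT}'s bound $T^{2\gT}$ arises from $\int_0^{2\log T}e^{-(1-\gT)t}\sinh t\,dt\asymp T^{2\gT}$, whereas a genuine $e^{-\gT t}$ decay would give $T^{2(1-\gT)}$. A correct write-up must therefore target $e^{-(1-\gT)t}$ (and should flag the misprint), rather than asserting an intermediate estimate the cited formulas cannot deliver. The issues you defer at the end are real but routine by comparison: the error term $O(nk\,e^{-t})$ in \eqref{MtrxAtInf} is only useful for $t\gg\log(2+|n|+|k|)$, so small $t$ relative to the frequencies must be handled by the trivial bound and then absorbed into the Sobolev weight, and the fiberwise uniformity over the direct-integral decomposition needs to be stated explicitly.
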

\

\section{Proof of Theorem \ref{thm:main}}\label{sec3}

Let $\G<G=\SL_2(\R)$
be a Fuchsian group of the second kind, and let $K=\SO(2)$ be the maximal compact.
Assume the limit set of $\G$ has Hausdorff dimension $\gd>1/2$. 
Let 
$$
0<\gd(1-\gd)=\gl_0<\gl_1\le\cdots\le\gl_N<1/4
$$ 
be the point spectrum of the Laplacian acting on $L^2(\G\bk \bH)$, with 
$$
\gl_j=s_j(1-s_j)
$$ 
and
$s_{j}>1/2$.
\\

Fix integers $n$ and $k$. Our goal is to evaluate
$$
\cN
(T)
:=
\sum_{\g\in\G\atop\|\g\|<T}
e^{2in\,\gt_{1}(\g)}
\ 
e^{2ik\,\gt_{2}(\g)}
.
$$

For $g\in G$, let
$$
f_T
(g):=
e^{2in\,\gt_{1}(g)}
\ 
e^{2ik\,\gt_{2}(g)}
\
{\bf1}_{\|g\|<T}
,
$$
and define $\cF_T:\G\bk G\times\G\bk G\to\C$ via
\be\label{cFTdef}
\cF_T(g,h)
:=
\sum_{\g\in\G}f_T(g^{-1}\g h)
.
\ee
Clearly $\cF_{T}(e,e)=\cN(T)$.
\\

For a fixed $\eta$ (to be chosen later depending on $T$), let 
$$
\psi
:
\G\bk G
\to\R
$$
be a smooth test function with
unit mass,
 $\int_{\G\bk G}\psi=1$,
and compact
support in 
a ball of radius $\eta$ about the identity $e\in G$.
Then the integral
\be\label{cHTdef}
\cH
(T):=\<\cF_T,\psi
\otimes \psi
\>
=
\int_{\G\bk G}\int_{\G\bk G}\cF_{T}(g,h)\psi(g)\psi(h)dg\, dh
\ee
approximates $\cN(T)$ 
as follows.

\begin{prop}
\be\label{propH1}
\cH(T)=\cN(T)
+
O
\left(
\eta(1+|n|+|k|)
T^{2\gd}
\right)
.
\ee
\end{prop}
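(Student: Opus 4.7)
My plan is to compare $\cH(T)$ with $\cN(T) = \cF_T(e,e)$ by exploiting the small support and unit mass of $\psi$. Since $\int_{\G\bk G}\psi = 1$, one has
$$
\cH(T) - \cN(T) = \int_{\G\bk G}\int_{\G\bk G} \bigl(\cF_T(g,h) - \cF_T(e,e)\bigr)\psi(g)\psi(h)\,dg\,dh,
$$
and by \eqref{cFTdef},
$$
\cF_T(g,h) - \cF_T(e,e) = \sum_{\g\in\G}\bigl(f_T(g^{-1}\g h) - f_T(\g)\bigr).
$$
It therefore suffices to bound this sum uniformly for $g,h$ in an $\eta$-ball of $e$, since integrating the resulting bound against $\psi(g)\psi(h)$ loses nothing.

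Next, I would split $f_T(g^{-1}\g h) - f_T(\g)$ into an \emph{oscillation term} coming from the harmonic factor $e^{2in\gt_1}e^{2ik\gt_2}$, and a \emph{boundary term} coming from the cutoff ${\bf 1}_{\|\cdot\|<T}$: writing $A(\g;g,h) := e^{2in\gt_1(g^{-1}\g h)}e^{2ik\gt_2(g^{-1}\g h)} - e^{2in\gt_1(\g)}e^{2ik\gt_2(\g)}$,
$$
f_T(g^{-1}\g h) - f_T(\g) = A(\g;g,h)\,{\bf 1}_{\|g^{-1}\g h\|<T} + e^{2in\gt_1(\g)}e^{2ik\gt_2(\g)}\bigl({\bf 1}_{\|g^{-1}\g h\|<T} - {\bf 1}_{\|\g\|<T}\bigr),
$$
and I estimate each piece separately.

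For the oscillation term, the Cartan angles $\gt_1,\gt_2$ are smooth on $G\setminus K$ and a short computation from the formulas in the footnote (using that the maps $\g\mapsto \g\cdot i$ to the visual boundary, and the corresponding inverse-image angle, are Lipschitz) shows that for $g,h$ in an $\eta$-ball of $e$, the differences $|\gt_i(g^{-1}\g h) - \gt_i(\g)|$ are $O(\eta)$, uniformly in $\g$ away from $K$. Hence $|A(\g;g,h)| \ll \eta(|n|+|k|)$, and summing over $\|\g\|<(1+O(\eta))T$ while invoking the Lax--Phillips bound $|\{\g : \|\g\|<R\}| \ll R^{2\gd}$ from \cite{LaxPhillips1982} gives a contribution of $O(\eta(|n|+|k|)T^{2\gd})$.

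For the boundary term, the same Lipschitz estimate gives $\|g^{-1}\g h\| = \|\g\|(1+O(\eta))$ for $g,h$ in an $\eta$-ball of $e$, so the indicator difference is supported on the thin shell $T(1-C\eta) < \|\g\| < T(1+C\eta)$. The Lax--Phillips asymptotic $|\{\g : \|\g\|<R\}|\sim c\,R^{2\gd}$ bounds the count of $\g$ in this shell by $O(\eta T^{2\gd})$, and the oscillatory factor has modulus one, so this piece also contributes $O(\eta T^{2\gd})$. Adding the two pieces yields \eqref{propH1}. The only non-mechanical input is the uniform Lipschitz control of the Cartan angles under small perturbations, which will be the main point to verify carefully; the rest is a direct application of the known Lax--Phillips counting asymptotic.
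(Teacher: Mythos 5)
Your proposal is correct and is essentially the paper's own argument: both compare $\cH(T)$ with $\cN(T)=\cF_T(e,e)$ using the unit mass of $\psi$, bound the change in the Cartan angles under $\eta$-perturbation by $O(\eta)$ to get the $\eta(|n|+|k|)$ factor on the bulk, and use Lax--Phillips to count both the bulk and the thin shell where the cutoff indicators disagree. Your splitting into an oscillation term plus a boundary term is just a reorganization of the paper's case analysis by ranges of $\|\g\|$, so there is nothing substantively different to flag.
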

\begin{proof}
Writing 
$$
\cF_{T}(g,h)=\cF_{T}(e,e) + (\cF_{T}(g,h)-\cF_{T}(e,e))
$$ 
and using $\int\psi=1$ gives
$$
\cH(T)=\cN(T)  + \cE(T),
$$
where
\beann
\cE(T)
&: =& 
\int_{\G\bk G}\int_{\G \bk G} (\cF_{T}(g,h)-\cF_{T}(e,e))\psi(g)\psi(h)dg\, dh
\\
&=&
\sum_{\g\in\G}
\int_{\G\bk G}\int_{\G \bk G} 
(f_{T}(g^{-1}\g h)-f_{T}(\g))
\psi(g)\psi(h)dg\, dh
.
\eeann
Recall that $\psi$ has support in a ball of radius $\eta$ about the identity, and let $g,h\in \supp\psi$. 
For $\g\in\G$, we consider three ranges of $\|\g\|$ separately:
\ben
\item If $\|\g\|>%\gg
{T\over 1-\eta},$ then both $f_{T}(g^{-1}\g h)$ and $f_{T}(\g)$ vanish. 
\item If ${T\over 1+\eta}<%\le%l
\|\g\|\le%l
{T\over 1-\eta},$ then we estimate trivially
\be\label{Est2}
\left| 
f_{T}(g^{-1}\g h)-f_{T}(\g)
\right|
\le 2
.
\ee
\item Lastly, if $\|\g\|\le%l
 {T\over 1+\eta},$ then 
$$
{\bf 1}_{\|g^{-1}\g h\|<T}
=
{\bf 1}_{\|\g \|<T}
=
1
,
$$
and from
$$
|
e^{2in\,\gt_{1}(g^{-1}\g h)}
-
e^{2in\,\gt_{1}(\g)}
| \ll |n| \eta
$$
(using $KA^{+}K$ coordinates),
it follows that
\bea
\label{Est3}
\left| 
f_{T}(g^{-1}\g h)-f_{T}(\g)
\right|
&\ll&
(|n|+|k|)\eta
.
\eea
\een
Combining \eqref{Est2} and \eqref{Est3} gives
\beann
\cE(T)
&\ll&
\sum_{\g\in\G\atop{T\over 1+\eta}<%\ll
\|\g\|\le%l
{T\over 1-\eta} }1
\quad
+
\quad
(|n|+|k|)\eta
\sum_{\g\in\G\atop \|\g\|\le%l
{T\over 1+\eta} }1
\\
&\ll&
\eta T^{2\gd}
+
(|n|+|k|)\eta
T^{2\gd}
,
\eeann
by Lax-Phillips \cite{LaxPhillips1982}. This completes the proof.
\end{proof}

%\subsection{Evaluating $\cH(T)$}

It remains to
evaluate $\cH(T)$. First we rewrite it, as follows.

\begin{lem}
The inner product $\cH(T)$ can be expressed as the follows: %ing integral of a matrix coefficient:
\be\label{HTmatCoeff}
\cH(T) 
= 
\int_{G}f_{T}(g)\<\pi(g).\psi,\psi\> dg
.
\ee
\end{lem}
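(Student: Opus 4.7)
The proof is a standard unfolding computation. The plan is to use the $\Gamma$-invariance of $\psi$ to unfold the sum over $\Gamma$ against one of the quotient integrals, then change variables on $G$ to recognize a matrix coefficient of the right-regular representation.

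Starting from the definition \eqref{cHTdef} and substituting \eqref{cFTdef}, I would write
\[
\cH(T) = \int_{\Gamma\bk G}\psi(g)\left(\int_{\Gamma\bk G}\sum_{\gamma\in\Gamma} f_T(g^{-1}\gamma h)\,\psi(h)\,dh\right) dg.
\]
The inner parenthesis is an unfolding: since $\psi$ is $\Gamma$-invariant as a function on $\Gamma\bk G$, combining the sum over $\Gamma$ with the integral over a fundamental domain for $\Gamma\bk G$ yields a single integral over all of $G$,
\[
\int_{\Gamma\bk G}\sum_{\gamma\in\Gamma} f_T(g^{-1}\gamma h)\,\psi(h)\,dh
=
\int_{G} f_T(g^{-1}h)\,\psi(h)\,dh.
\]
(Here I am using that $\psi$ is real-valued, or equivalently absorbing a conjugation, so that the inner product below lines up.)

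Next I would change variables $h\mapsto g h$ in the $G$-integral, using right-invariance of Haar measure, to obtain
\[
\int_{G} f_T(g^{-1}h)\,\psi(h)\,dh = \int_{G} f_T(h)\,\psi(g h)\,dh = \int_{G} f_T(h)\,(\pi(h).\psi)(g)\,dh.
\]
Substituting back into $\cH(T)$ and interchanging the order of integration (Fubini, justified by absolute convergence since $f_T$ has compact support and $\psi$ is bounded with compact support modulo $\Gamma$), I would arrive at
\[
\cH(T) = \int_{G} f_T(h)\left(\int_{\Gamma\bk G}(\pi(h).\psi)(g)\,\psi(g)\,dg\right) dh
= \int_{G} f_T(h)\,\langle \pi(h).\psi,\psi\rangle\,dh,
\]
which is \eqref{HTmatCoeff}.

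There is no real obstacle here: the only points that need a moment's care are (i) that the unfolding is legal, which follows from the compactness of the supports and $\Gamma$-invariance of $\psi$, and (ii) that the inner product convention on $L^2(\Gamma\bk G)$ matches up (which it does because $\psi$ can be taken real, so $\overline{\psi}=\psi$). Once \eqref{HTmatCoeff} is established, the task reduces to bounding the matrix coefficient $\langle \pi(h).\psi,\psi\rangle$ via the spectral decomposition \eqref{Vdecomp} and integrating against $f_T$, which is what is carried out in the subsequent sections.
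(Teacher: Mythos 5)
Your proposal is correct and is essentially the paper's own argument: both unfold the sum over $\G$ against one copy of the fundamental-domain integral (using the $\G$-periodicity of $\psi$) to get an integral over all of $G$, translate so that the group variable sits inside $\psi$ as $\psi(xg)=(\pi(g).\psi)(x)$, and finish with a Fubini interchange justified by the compact supports. The only difference is bookkeeping — the paper substitutes $g=x^{-1}\g h$ and stitches the translates $\g^{-1}x(\G\bk G)$ into $G$, whereas you unfold first and then change variables $h\mapsto gh$ — which amounts to the same computation.
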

\begin{proof}
Insert the definition of $\cF_{T}$ \eqref{cFTdef} into \eqref{cHTdef}, interchange summation and integration,  change variables $g=x^{-1}\g h$, and use the left $\G$-invariance of $\psi$:
\beann
\cH(T)
&=&
\int_{x\in\G\bk G}
\int_{h\in\G\bk G}
\sum_{\g\in\G}f_T(x^{-1}\g h)
\psi(h)
\psi(x)
dh\, dx
\\
&=&
\int_{x\in\G\bk G}
\left(
\sum_{\g\in\G}
\int_{h\in\G\bk G}
f_T(x^{-1}\g h)
\psi(h)
dh
\right)
\psi(x)
\, dx
\\
&=&
\int_{x\in\G\bk G}
\left(
\sum_{\g\in\G}
\int_{g\in\g^{-1}x(\G\bk G)}
f_T(g)
\psi(\g^{-1}xg)
dg
\right)
\psi(x)
\, dx
\\
&=&
\int_{x\in\G\bk G}
\left(
\int_{g\in G}
f_T(g)
\psi(xg)
dg
\right)
\psi(x)
\, dx
,
\eeann
since for $x$ fixed,  $x(\G\bk G)$ is a fundamental domain for $\G$, and hence
$\sum_{\g\in\G}\int_{\g^{-1}x(\G\bk G)}%=\int_{gG}
=\int_{G}$.
Interchanging integrals gives
$$
\cH(T)
=
\int_{G}
f_T(g)
\int_{\G\bk G}
\psi(xg)
\psi(x)
dx
\, 
dg
,
$$
as desired.
\end{proof}

At this point, we could expand the matrix coefficient $\<\pi(g).\psi,\psi\>$ spectrally, but the error term would then contain more harmonics than necessary, leading to worse bounds (essentially requiring the Sobolev norms arising in \eqref{decayMtrxgT}, in place of the $\cL^{2}$ norms in \eqref{decayMtrx}). So we first remove the immaterial harmonics.
To this end, decompose $\psi$ into its Fourier series with respect to $\gt_{2}$,
\be\label{psiFE}
\psi(g)=\sum_{m\in\Z} \psi_{2m}(g),
\ee
where $\psi_{2m}$ transforms on the right by
$$
\psi_{2m}(g\,k_{\gt})=\psi_{2m}(g)e^{2im\gt}.
$$
Insert \eqref{psiFE} twice into \eqref{HTmatCoeff}:
$$
\cH(T)=\sum_{m}\sum_{\ell}
\int_{G}f_{T}(g)
\<\pi(g).\psi_{2m},\psi_{2\ell}\>
 dg
.
$$

Note that   the matrix coefficient above transforms on the left and right by
$$
\<\pi(k_{\gt_{1}}g\,k_{\gt_{2}}).\psi_{2m},\psi_{2\ell}\>
=
e^{2i\ell\gt_{1}}
\<\pi(g).\psi_{2m},\psi_{2\ell}\>
e^{2im\gt_{2}}
,
$$
whence
$$
\int_{G}f_{T}(g)
\<\pi(g).\psi_{2m},\psi_{2\ell}\>
dg
=0
,
$$
unless 
 $m=-k$ and $\ell=-n$. Having dispensed with extraneous harmonics, we write
\be\label{Hnow}
\cH(T)=
\int_{G}f_{T}(g)
\<\pi(g).\psi_{-2k},\psi_{-2n}\>
 dg
,
\ee
and now expand the matrix coefficient spectrally. Recall that $\gl_{0}=\gd(1-\gd)$ is the base frequency with corresponding eigenfunction $\varphi_{0}$, and  assume at first that it is the sole discrete eigenvalue, the rest of the spectrum being tempered. Let $V$ denote the vector space consisting of the closure of the $G$-span of $\vf_{0}$, and use the notation $v_{0}=\varphi_{0}$ and \eqref{v2kDef}.

As the matrix coefficient in \eqref{Hnow} is bi-$K$-isotypic, only one mode is excited in each expansion.
Hence
\be\label{pipsiExp}
\<\pi(g).\psi_{-2k},\psi_{-2n}\>
=
\<\psi_{-2k},v_{-2k}\>
\<v_{-2n},\psi_{-2n}\>
\<\pi(g).v_{-2k},v_{-2n}\>
+
\<\pi(g).\psi_{-2k}^{\perp},\psi_{-2n}^{\perp}\> 
,
\ee
where the $K$-spans of $\psi_{1}^{\perp}$ and $\psi_{2}^{\perp}$ are one-dimensional and the last matrix coefficient is tempered.
Note that
\be\label{psi2kpsi}
\<\psi_{-2k},v_{-2k}\> = 
\<\psi,v_{-2k}\>
.
\ee
\begin{prop}
As $T\to\infty$,
\bea
\nonumber
\hskip-.5in\cH(T)
&=&
\<\psi , v_{-2k}\>
\<v_{-2n},\psi \>
\int_{t=0}^{2\log T}
%f_{T}(a_{t})
\<\pi(a_{t}).v_{-2k},v_{-2n}\>
\sinh t\
dt
\\
\label{cHTT}
&&
\hskip2in
+
O
\left(
\|\psi\|_{2}^{2}\
T\log T
\right)
.
\eea
\end{prop}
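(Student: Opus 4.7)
Starting from \eqref{Hnow} and the decomposition \eqref{pipsiExp}, I would split $\cH(T)$ into a main term and an error term:
\begin{align*}
\cH(T) &= \<\psi,v_{-2k}\>\<v_{-2n},\psi\>\int_G f_T(g)\,\<\pi(g).v_{-2k},v_{-2n}\>\,dg \\
&\quad + \int_G f_T(g)\,\<\pi(g).\psi_{-2k}^\perp,\psi_{-2n}^\perp\>\,dg,
\end{align*}
using \eqref{psi2kpsi} to replace $\<\psi_{-2k},v_{-2k}\>$ by $\<\psi,v_{-2k}\>$ (and similarly for the other factor).

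For the main term, I would pass to $KA^+K$ coordinates, where Haar measure on $G$ takes the form $\sinh t\,dt\,d\gt_1\,d\gt_2$ (up to normalization). Since $v_{-2k}$ has right $K$-weight $-2k$ and $v_{-2n}$ has left $K$-weight $-2n$, the matrix coefficient factors as
\[
\<\pi(k_{\gt_1}a_t k_{\gt_2}).v_{-2k},v_{-2n}\> \;=\; e^{-2in\gt_1}\,e^{-2ik\gt_2}\,\<\pi(a_t).v_{-2k},v_{-2n}\>.
\]
These phases cancel exactly against the phases $e^{2in\gt_1(g)}e^{2ik\gt_2(g)}$ built into $f_T$, so the $\gt_1,\gt_2$ integrations produce constants. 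The cutoff $\|g\|<T$ becomes $\cosh t < T^2/2$, i.e.\ $t$ ranges essentially over $[0,2\log T]$, which after absorbing the normalization constants yields the claimed main term.

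For the error term, the key point is that $\psi_{-2k}^\perp$ is a right $K$-weight vector (of weight $-2k$) and $\psi_{-2n}^\perp$ is a right $K$-weight vector (of weight $-2n$), so each spans a one-dimensional $K$-module, which is precisely the hypothesis of the tempered decay estimate \eqref{decayMtrx}. Applying \eqref{decayMtrx} gives
\[
|\<\pi(k_{\gt_1}a_t k_{\gt_2}).\psi_{-2k}^\perp,\psi_{-2n}^\perp\>|
\;\ll\; t\,e^{-t/2}\,\|\psi_{-2k}^\perp\|_2\,\|\psi_{-2n}^\perp\|_2
\;\le\; t\,e^{-t/2}\,\|\psi\|_2^2.
\]
Integrating $|f_T|\le 1$ against this bound in $KA^+K$ coordinates, the $\gt_1,\gt_2$ integrals are bounded, and the radial integral is
\[
\int_0^{2\log T} t\,e^{-t/2}\sinh t\,dt \;\ll\; \int_0^{2\log T} t\,e^{t/2}\,dt \;\ll\; T\log T,
\]
which produces the stated $O(\|\psi\|_2^2\,T\log T)$ remainder.

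The only mild obstacle is bookkeeping the normalizations: one must verify that the constants from the Cartan Haar measure and from the $K$-integrals of the phases combine in exactly the way required to make the main term constant-free as written. Beyond this, the argument is routine: the main term is bi-$K$-isotypic by construction, and the error term is handled by quantitative mixing. Note that the proposition is stated under the working assumption that $\gd(1-\gd)$ is the only discrete eigenvalue; in the general case the same argument, applied to each $V_{\varphi_j}$ separately, will produce the additional main terms indexed by $j=1,\dots,N$.
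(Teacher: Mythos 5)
Your proposal is correct and follows essentially the same route as the paper: the main term is obtained from \eqref{Hnow}, \eqref{pipsiExp}, and \eqref{psi2kpsi} (your Cartan-coordinate phase cancellation just makes explicit what the paper leaves implicit), and the remainder is bounded exactly as in the paper by taking absolute values, applying the tempered decay estimate \eqref{decayMtrx} to the $K$-isotypic vectors $\psi^{\perp}_{-2k},\psi^{\perp}_{-2n}$, and integrating $t\,e^{-t/2}\sinh t\,dt$ up to $t\approx 2\log T$ to get $\|\psi\|_{2}^{2}\,T\log T$.
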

\pf
The main term is simply a combination of \eqref{Hnow}, \eqref{pipsiExp}, and \eqref{psi2kpsi}.
It remains to estimate
$$
\int_{G}f_{T}(g)\<\pi(g).\psi^{\perp}_{-2k},\psi^{\perp}_{-2n}\>.
$$
%Restrict the $G$ integral to $A^{+}$,
Take absolute values
and apply mixing \eqref{decayMtrx}. The Haar measure on $da_{t}$ is $\sinh t dt$,
giving
\be\label{applyMtxC}
\|\psi\|^{2}
\int_{A^{+}}\bo_{\|a_{t}\|<{T}}\ t\, e^{-t/2} \sinh t dt
\ll 
\|\psi\|_{2}^{2}\
T\log T
,
\ee
as claimed.
\epf

Recall that $\psi$
has unit mass, and is compactly supported in a ball of radius $\eta$ about the origin. This fact has two implications: the first is that
$$
\<v_{-2n},\psi\> = v_{-2n}(e) + O(\eta),
$$
and the second is that, since  $G$ is  a $3$-dimensional space,  we have
\be\label{psidim}
\|\psi\|^{2} \ll \eta^{-3}.
\ee
Combining these facts with \eqref{propH1}, we now have 
\bea
\label{HT3}
\cN(T)
&=&
\bar v_{-2k}(e)
v_{-2n}(e)
\int_{t=0}^{2\log T}
%f_{T}(a_{t})
\<\pi(a_{t}).v_{-2k},v_{-2n}\>
\sinh t\
dt
\\
\nonumber
&&
\hskip1in
+
O
\bigg(
\eta(1+|n|+|k|)
T^{2\gd}
+
\eta^{-3}\
T\log T
\bigg)
.
\eea
The optimal choice 
$$
\eta=T^{(1-2\gd)/4}(\log T)^{1/4}(1+|n|+|k|)^{-1/4}
$$ 
in \eqref{HT3} leads to the error term 
$$
O\bigg(T^{1\cdot\frac14+2\gd\cdot\frac34}(\log T)^{1/4} (1+|n|+|k|)^{3/4}\bigg)
,
$$ 
as claimed.

Returning to \eqref{HT3}, it remains only to evaluate the main term. This is simply a matter of combining 
\eqref{RkOrig}, \eqref{v2kDef}, and \eqref{c2nMu}, together with \eqref{MatCoef} and \eqref{MtrxAtInf}. This completes the proof of Theorem \ref{thm:main}.

\

\

\section{Proof of Theorem  \ref{thm:unif}}\label{sec4}

As before, let $\G<G=\SL_2(\R)$
be a Fuchsian group of the second kind and assume the limit set of $\G$ has Hausdorff dimension $\gd>1/2$. 
Assume $\G<\SL(2,\Z)$ with ramification number $\fB$, and 
let $\gT$ be a spectral gap for $\G$.
For $q\ge1$, write $q=q'q''$ with $q'\mid\fB$, and 
let 
$$
0<\gd(1-\gd)=\gl_0^{q}<\gl_1^{q}\le\cdots\le\gl^{q}_{N(q)}<1/4
$$ 
be the point spectrum of the Laplacian acting on $L^2(\G(q)\bk \bH)$. The eigenvalues below $\gT(1-\gT)$ are  all oldforms coming from level $1$, with the possible exception of finitely many eigenvalues coming from level $q'$.
\\

Fix a function $f(g)=f(\gt_{1}(g),t(g),\gt_{2}(g))$ in  $KA^{+}K$ coordinates, and fix any $\g_{0}\in\G$. Assume that $|f|\le 1$. Our goal in this section is to evaluate
$$
\cN_{q}
(T)
:=
\sum_{\g\in\g_{0}\cdot\G(q)\atop\|\g\|<T}
f(\g)
.
$$

For $g\in G$, let
$$
f_T
(g):=
f(g)\
{\bf1}_{\|g\|<T}
,
$$
and define $\cF_{T,q}:\G(q)\bk G\times\G(q)\bk G\to\C$ via
\be\label{cFTdefq}
\cF_{T,q}(g,h)
:=
\sum_{\g\in\G(q)}f_T(g^{-1}\g h)
.
\ee
Clearly $\cF_{T,q}(\g_{0}^{-1},e)=\cN_{q}(T)$.
\\

Now for a fixed $\eta$ (to be chosen later depending on $T$), let 
$$
\psi
:
 G
\to\R
$$
be a smooth test function with
unit mass,
 $\int_{ G}\psi=1$,
and compact
support in 
a ball of radius $\eta$ about the identity $e\in G$.
Average over the group:
\be\label{PsiqDef}
\Psi_{q}(g):=\sum_{\g\in\G(q)}\psi(\g g).
\ee
Let 
$$
\psi_{\g_{0}}(g)=\psi(\g_{0}g),
$$
and similarly average 
$$
\Psi_{q,\g_{0}}(g)
:=
\Psi_{q}(\g_{0}g)
=
\sum_{\g\in\G(q)}\psi_{\g_{0}}(\g g)
.
$$
The integral
\bea
\nonumber
\cH_{q}
(T)&:=&\<\cF_T,\Psi_{q,\g_{0}}
\otimes \Psi_{q}
\>
=
\int_{\G(q)\bk G}\int_{\G(q)\bk G}\cF_{T}(g,h)\Psi_{q,\g_{0}}(g)\Psi_{q}(h)dg\, dh
\\
\label{cHTdefq}
&=&
\int_{\G(q)\bk G}\int_{\G(q)\bk G}\cF_{T}(\g_{0}^{-1}g,h)\Psi_{q}(g)\Psi_{q}(h)dg\, dh
\eea
again approximates $\cN_{q}(T)$ 
as follows.
Recall that 
$$
\cS_{\infty,T}f=\max_{X\in\{0,X_{1},X_{2},X_{3}\}}\sup_{g\in G, \|g\|<T}|d\pi(X).f(g)|
,
$$

\begin{prop}
\be\label{propH1q}
\cH_{q}(T)=\cN_{q}(T)
+
O
\left(
\eta(1+ \cS_{\infty,T}f)
T^{2\gd}
\right)
.
\ee
\end{prop}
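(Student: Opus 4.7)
The argument parallels the proof of Proposition \ref{propH1}. First, using the normality of $\G(q) \triangleleft \G$ (the principal congruence subgroup of level $q$ is normal in $\G$, and any containing congruence group is treated similarly), the function $g \mapsto \cF_{T,q}(\g_0^{-1} g, h)$ is $\G(q)$-invariant in $g$ for each fixed $h$. Unfolding the periodizations $\Psi_q$ against a $\G(q)$-fundamental domain therefore collapses the integral \eqref{cHTdefq} to
\[
\cH_q(T) = \int_G \int_G \cF_{T,q}(\g_0^{-1}g, h)\,\psi(g)\,\psi(h)\,dg\,dh.
\]
Writing $\cF_{T,q}(\g_0^{-1}g, h) = \sum_{\tilde \g \in \g_0 \G(q)} f_T(g^{-1}\tilde\g h)$ and using $\int_G \psi = 1$, the difference becomes
\[
\cH_q(T) - \cN_q(T) = \sum_{\tilde\g \in \g_0\G(q)} \int_G\int_G \bigl[f_T(g^{-1}\tilde\g h) - f_T(\tilde\g)\bigr]\psi(g)\psi(h)\,dg\,dh.
\]

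Next I would decompose the outer sum into the same three ranges used in the proof of Proposition \ref{propH1}. For $g, h$ in the support of $\psi$ one has $\|g^{-1}\tilde\g h\| = \|\tilde\g\|(1 + O(\eta))$, so if $\|\tilde\g\| > T/(1-\eta)$ both indicators vanish; in the transition annulus $T/(1+\eta) < \|\tilde\g\| \le T/(1-\eta)$ one estimates trivially by $2|f|_\infty \le 2$; and in the interior range $\|\tilde\g\| \le T/(1+\eta)$ both indicators equal one and the fundamental theorem of calculus applied along the one-parameter subgroups $\exp(sX_i)$ yields
\[
|f(g^{-1}\tilde\g h) - f(\tilde\g)| \ll \eta\, \cS_{\infty,T} f,
\]
where the derivative bound is only needed on the ball $\{\|g\| < T\}$, which is exactly what the norm $\cS_{\infty,T}$ was built to capture. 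Majorizing the coset count by the full-group count (since $\g_0\G(q) \subset \G$) and invoking Lax--Phillips \cite{LaxPhillips1982}, the annulus contributes $O(\eta T^{2\gd})$ elements and the interior range at most $O(T^{2\gd})$, yielding
\[
|\cH_q(T) - \cN_q(T)| \ll \eta\, T^{2\gd} + \eta\, \cS_{\infty,T}f\cdot T^{2\gd} \ll \eta\,(1 + \cS_{\infty,T}f)\, T^{2\gd},
\]
as claimed.

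The proof is essentially a routine transposition of Proposition \ref{propH1}, so there is no serious obstacle. The one point meriting care is that the implied constants remain uniform as $q = q'q''$ and $\g_0 \in \G$ vary, but this follows gratis from monotonicity: the coset count $\#\{\tilde\g \in \g_0\G(q) : \|\tilde\g\| < T\}$ is dominated by the count over all of $\G$, whose Lax--Phillips asymptotic $\ll T^{2\gd}$ depends only on $\G$ itself, and in particular is independent of $q''$ and $\g_0$, matching the uniformity asserted in Theorem \ref{thm:unif}.
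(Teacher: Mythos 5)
Your argument is correct and is essentially the paper's own proof: the paper simply repeats the three-range argument of Proposition \ref{propH1}, replacing the harmonic bound there by the estimate $|f_{T}(g^{-1}\g_{0}\g h)-f_{T}(\g_{0}\g)|\ll\eta\,\cS_{\infty,T}f$ on the interior range, exactly as you do, with the Lax--Phillips count giving the uniform $O(\eta(1+\cS_{\infty,T}f)T^{2\gd})$ error. Your explicit unfolding of $\Psi_{q}$ and the remark on uniformity in $q''$ and $\g_{0}$ are just the details the paper leaves implicit.
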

\begin{proof}

This is the same argument as in the proof of Proposition \ref{propH1}, except 
using the bound
\be
\label{Est3q}
\left| 
f_{T}(g^{-1}\g_{0}\g h)-f_{T}(\g_{0}\g)
\right|
\ll
\eta\
\cS _{\infty,T} f
,
\ee
for $\|\g_{0}\g\|<T/(1+\eta)$.
\end{proof}

The argument leading to \eqref{HTmatCoeff} also gives

\begin{lem}
The inner product $\cH_{q}(T)$ can be expressed as: %ing integral of a matrix coefficient:
\be\label{HTmatCoeffq}
\cH_{q}(T) 
= 
\int_{G}f_{T}(g)\<\pi(g).\Psi_{q},\Psi_{q,\g_{0}}\>_{\G(q)\bk G} dg
.
\ee
\end{lem}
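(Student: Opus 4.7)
The plan is to carry out the same unfolding manipulation used to establish the earlier identity \eqref{HTmatCoeff}, modified only to track the fixed shift $\g_0$. Starting from the second line of \eqref{cHTdefq}, I would insert the definition \eqref{cFTdefq} of $\cF_{T,q}$ and exchange the sum over $\g\in\G(q)$ with the $h$-integration. In the inner $h$-integral, the change of variables $h\mapsto\g^{-1}h$ combined with the left-$\G(q)$-invariance of $\Psi_q$ converts the sum of integrals over $\g^{-1}\cdot(\G(q)\bk G)$ into a single integration over all of $G$, yielding
$$
\cH_q(T)
=
\int_{\G(q)\bk G}\Psi_q(g)\int_G f_T\!\left((\g_0^{-1}g)^{-1}h\right)\Psi_q(h)\,dh\,dg.
$$

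A further substitution $h\mapsto(\g_0^{-1}g)\,h$ in the inner integral, using left-invariance of Haar measure, together with Fubini gives
$$
\cH_q(T)
=
\int_G f_T(h)\int_{\G(q)\bk G}\Psi_q(g)\,\Psi_q(\g_0^{-1}g\,h)\,dg\,dh.
$$
The final step is to identify the inner integral as the desired matrix coefficient. Changing variables $g\mapsto\g_0 x$ in the inner integral converts it to $\int \Psi_{q,\g_0}(x)\,\Psi_q(x\,h)\,dx$, which, using reality of $\Psi_q$, equals $\<\pi(h).\Psi_q,\Psi_{q,\g_0}\>_{\G(q)\bk G}$. Renaming $h$ back as $g$ yields \eqref{HTmatCoeffq}.

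The only step requiring genuine care, and the main obstacle, is that the pairing $\<\cdot,\Psi_{q,\g_0}\>_{\G(q)\bk G}$ is well defined: this demands that $\Psi_{q,\g_0}$ be left-$\G(q)$-invariant, and also that the shifted fundamental domain $\g_0^{-1}\cdot(\G(q)\bk G)$ represent the same integration as $\G(q)\bk G$. Both facts follow from the identity $\g_0^{-1}\G(q)\g_0=\G(q)$, i.e., from the normality of $\G(q)$ in $\G$. This is automatic for the principal congruence subgroup $\{\g\in\G:\g\equiv I(q)\}$ that $\G(q)$ is required to contain, and is the natural hypothesis on the choice of $\G(q)$ throughout the paper's framework. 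Once this normality is noted, the entire argument is formal and mirrors the $q=1$ case verbatim.
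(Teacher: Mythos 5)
Your unfolding is exactly the argument the paper invokes (it proves the lemma by saying the computation leading to \eqref{HTmatCoeff} carries over), so your proposal is correct and follows essentially the same route. Your closing remark about needing $\g_{0}$ to normalize $\G(q)$ is a fair observation rather than a defect: the paper's own identification $\Psi_{q,\g_{0}}(g)=\Psi_{q}(\g_{0}g)=\sum_{\g\in\G(q)}\psi_{\g_{0}}(\g g)$ already presupposes the same fact, which holds in the intended setting.
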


For ease of exposition, assume the spectrum below $\gT(1-\gT)$ consists of only the base eigenvalue $\gl_{0}=\gd(1-\gd)$ corresponding to $\vf^{(q)}$, and one newform $\tilde\vf^{(q)}$ from the ``bad'' level $q'\mid\fB$. The general case is a finite sum of such terms.
The normalizations are such that
\be\label{vfQnorm}
\vf^{(q)} = {1\over \sqrt{[\G:\G(q)]}}\vf^{(1)},
\ee
and
$$
\tilde\vf^{(q)} 
= 
{1\over \sqrt{[\G(q'):\G(q)]}}
\tilde
\vf^{(q')}
= 
{1\over 
\sqrt{[\G:\G(q)]}
}
\sqrt{[\G:\G(q')]}
\tilde
\vf^{(q')}
,
$$
with $\vf^{(1)}$ a normalized newform in $\cL^{2}(\G\bk G)$, and $\tilde\vf^{(q')}\in\cL^{2}(\G(q')\bk G)$. Let $V$ and $\tilde V$ be the irreducible vector subspaces of $\cL^{2}(\G(q)\bk G)$ generated by the $G$-spans of $\vf^{(q)}$ and $\tilde \vf^{(q)}$, respectively. The space $V$ has a dense subspace spanned by the $K$-fixed vector $\vf^{(q)}$ and its translates $\vf^{(q)}_{2k}$ under ladder operators, and similarly with $\tilde V$. 
%For $\Psi$ being either $\Psi_{q}$ or $\Psi_{q,\g_{0}}$, w
Write
$$
\Psi_{q} 
= 
\Psi_{q}\big|_{V}
+
\Psi_{q}\big|_{\tilde V}
+
\Psi_{q}^{\perp}
,
$$
and similarly with $\Psi_{q,\g_{0}}$,
where the projections are
\be\label{PsiqProj}
\Psi_{q}\big|_{V}
:=
\sum_{k\in\Z}
\<\Psi_{q},\vf^{(q)}_{2k}\>
\vf_{2k}^{(q)},
\ee
etc.
Using \eqref{HTmatCoeffq}, we can now write
\be\label{cHqDecomp}
\cH_{q}(T)
=
W_{q}(T)
+
\tilde W_{q}(T)
+
W^{\perp}_{q}(T),
\ee
where
\be\label{WqTdef}
W_{q}(T)
:=
\int_{G}f_{T}(g)\<\pi(g).\Psi_{q}\big|_{V},\Psi_{q,\g_{0}}\big|_{V}\>_{\G(q)\bk G} dg
,
\ee
and similarly with the other two pieces.

\begin{lem}
\label{Qunfold}
$$
\<\Psi_{q},\vf_{2k}^{(q)}\>_{\G(q)\bk G}
=
{1\over \sqrt{[\G:\G(q)]}}
\<
\Psi_{1}\vf_{2k}^{(1)}\>_{\G\bk G}
,
$$
where 
$\Psi_{1}$ is defined by averaging over all of $\G$, as in \eqref{PsiqDef}. The same equality holds for $\Psi_{q,\g_{0}}$.
\end{lem}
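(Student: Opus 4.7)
The lemma is essentially an unfolding identity combined with the oldform normalization \eqref{vfQnorm}. My plan is: (1) unfold the inner product over $\G(q)\bk G$ against $\Psi_q$ to an integral over all of $G$; (2) replace $\vf^{(q)}_{2k}$ by $\frac{1}{\sqrt{[\G:\G(q)]}}\vf^{(1)}_{2k}$; (3) refold against the $\G$-average $\Psi_1$.

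\textbf{Steps.} First, by definition of $\Psi_q$ and the standard unfolding trick,
\[
\<\Psi_q,\vf^{(q)}_{2k}\>_{\G(q)\bk G}
=\int_{\G(q)\bk G}\sum_{\g\in\G(q)}\psi(\g g)\,\overline{\vf^{(q)}_{2k}(g)}\,dg
=\int_G \psi(g)\,\overline{\vf^{(q)}_{2k}(g)}\,dg,
\]
which is legitimate because $\vf^{(q)}_{2k}$ is $\G(q)$-invariant on the left. Next, I claim that the scalar identity \eqref{vfQnorm} for the $K$-fixed vectors lifts to every $K$-isotypic vector in the basis \eqref{v2kDef}:
\[
\vf^{(q)}_{2k}=\tfrac{1}{\sqrt{[\G:\G(q)]}}\,\vf^{(1)}_{2k}.
\]
This is because the ladder operators $\cR,\cL$ act on the right and therefore commute with the scalar identification of $\vf^{(q)}$ with $\vf^{(1)}/\sqrt{[\G:\G(q)]}$ (equivalently, the $G$-equivariant isomorphism between the irreducible subrepresentations sends $K$-isotypic vectors to $K$-isotypic vectors with the same normalizing constant \eqref{bksDef}). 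Substituting yields
\[
\int_G\psi(g)\,\overline{\vf^{(q)}_{2k}(g)}\,dg
=\tfrac{1}{\sqrt{[\G:\G(q)]}}\int_G\psi(g)\,\overline{\vf^{(1)}_{2k}(g)}\,dg.
\]
Finally, I refold the integral over $G$ by partitioning into $\G$-translates of a fundamental domain for $\G\bk G$:
\[
\int_G\psi(g)\,\overline{\vf^{(1)}_{2k}(g)}\,dg
=\int_{\G\bk G}\sum_{\g\in\G}\psi(\g g)\,\overline{\vf^{(1)}_{2k}(g)}\,dg
=\<\Psi_1,\vf^{(1)}_{2k}\>_{\G\bk G},
\]
using again that $\vf^{(1)}_{2k}$ is $\G$-invariant on the left. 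Combining the three displays yields the claimed identity.

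\textbf{The $\Psi_{q,\g_0}$ case.} Since $\Psi_{q,\g_0}(g)=\Psi_q(\g_0 g)$ and $\vf^{(q)}_{2k}$ is $\G$-invariant (hence $\g_0$-invariant, as $\g_0\in\G$), the change of variables $g\mapsto \g_0^{-1}g$ in the inner product reduces this case immediately to the previous one.

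\textbf{Main obstacle.} There is no serious obstacle; the only point requiring care is the statement $\vf^{(q)}_{2k}=\vf^{(1)}_{2k}/\sqrt{[\G:\G(q)]}$ for all $k$, not just $k=0$. This requires observing that the choice of orthonormal basis \eqref{v2kDef} is produced by applying the right-regular action of $\cR^{|k|}$ or $\cL^{|k|}$ together with the deterministic normalizing constant $\sqrt{b_{|k|,s}}$ from \eqref{bksDef}, both of which depend only on the abstract irreducible $G$-representation and not on the ambient automorphic model; so the proportionality constant between the $K$-fixed vectors propagates to the entire $K$-type decomposition.
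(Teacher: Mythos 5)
Your proof is correct and takes essentially the same route as the paper: an unfold--refold of the averaged test function combined with the oldform normalization \eqref{vfQnorm}, the only cosmetic difference being that you unfold before substituting the relation $\vf^{(q)}_{2k}=\vf^{(1)}_{2k}/\sqrt{[\G:\G(q)]}$ while the paper substitutes first. Your explicit remark that this relation propagates from the $K$-fixed vector to every $K$-type because the basis \eqref{v2kDef} is built from the right-regular ladder operators with the model-independent constants \eqref{bksDef} is a point the paper leaves implicit, and is a welcome addition.
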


\pf
Using \eqref{vfQnorm} and \eqref{PsiqDef}, unfold and refold the sum:
\beann
\<\Psi_{q},\vf_{2k}^{(q)}\>_{\G(q)\bk G}
&=&
{1\over \sqrt{[\G:\G(q)]}}
\int_{\G(q)\bk G}
\sum_{\g\in\G(q)}\psi(\g g)\vf_{2k}^{(1)}(g)dg
\\
&=&
{1\over \sqrt{[\G:\G(q)]}}
\int_{ G}
\psi( g)\vf_{2k}^{(1)}(g)dg
\\
&=&
{1\over \sqrt{[\G:\G(q)]}}
\int_{\G\bk G}
\sum_{\g\in\G}
\psi(\g g)\vf_{2k}^{(1)}(g)dg
\\
&=&
{1\over \sqrt{[\G:\G(q)]}}
\<
\Psi_{1}(g)\vf_{2k}^{(1)}\>_{\G\bk G}
,
\eeann
as claimed. Replacing $\Psi_{q}$ by $\Psi_{q,\g_{0}}$, one makes the additional change of variables $g\mapsto \g_{0} g$, and uses the $\G$-invariance of $\vf^{(1)}_{2k}$.
\epf

\begin{lem}\label{qMatCoef}
For any $k,k'\in\Z$,
$$
\<
\pi(g)\vf_{2k}^{(q)}
,
\vf_{2k'}^{(q)}
\>_{\G(q)\bk G}
=
\<
\pi(g)\vf_{2k}^{(1)}
,
\vf_{2k'}^{(1)}
\>_{\G\bk G}
.
$$
\end{lem}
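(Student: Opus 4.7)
The plan is to reduce the matrix coefficient on $\G(q)\bk G$ to one on $\G\bk G$ by exploiting two facts: the normalization relation \eqref{vfQnorm} between $\vf^{(q)}$ and $\vf^{(1)}$, and the fact that integration over $\G(q)\bk G$ of a $\G$-invariant function equals $[\G:\G(q)]$ times the integral over $\G\bk G$ (since $\G(q)\bk G$ is a degree $[\G:\G(q)]$ cover of $\G\bk G$).

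First I would note that the ladder operators $\cR,\cL$ lie in the complexified Lie algebra $\fg_{\C}$, and hence their action commutes with the covering map $\G(q)\bk G\to\G\bk G$. Consequently, the relation \eqref{vfQnorm} persists at every $K$-type:
$$
\vf^{(q)}_{2k} = \frac{1}{\sqrt{[\G:\G(q)]}}\,\vf^{(1)}_{2k}
$$
for all $k\in\Z$. (The normalization constants $b_{|k|,\gd}$ from \eqref{bksDef} appearing in \eqref{v2kDef} are identical at the two levels, since they depend only on the representation parameter $s=\gd$, which is unchanged.)

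Next I would substitute this identity into the left-hand side to factor out the two normalizing square roots:
$$
\<\pi(g)\vf^{(q)}_{2k},\vf^{(q)}_{2k'}\>_{\G(q)\bk G}
=
\frac{1}{[\G:\G(q)]}
\int_{\G(q)\bk G}\bigl(\pi(g)\vf^{(1)}_{2k}\bigr)(h)\,\overline{\vf^{(1)}_{2k'}(h)}\,dh.
$$
Since both $\pi(g)\vf^{(1)}_{2k}$ and $\vf^{(1)}_{2k'}$ are $\G$-invariant as functions on $G$ (right translation by $g$ preserves $\G$-invariance), the integrand descends to $\G\bk G$. Unfolding the fundamental domain for $\G(q)\bk G$ into $[\G:\G(q)]$ copies of a fundamental domain for $\G\bk G$ produces a factor of $[\G:\G(q)]$ which cancels the prefactor, yielding the right-hand side.

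No genuine obstacle is expected here; the only subtlety worth double-checking is that the basis \eqref{v2kDef} really scales linearly across levels, which is immediate from the fact that the norms $b_{|k|,\gd}$ are defined intrinsically by the $\fg$-action on the irreducible $V_{\vf_{0}}$ and are unaffected by which quotient $\G(q)\bk G$ realizes that irreducible. The argument is essentially a bookkeeping version of Lemma \ref{Qunfold}.
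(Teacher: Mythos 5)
Your argument is correct and is essentially the paper's own proof: apply the normalization \eqref{vfQnorm} (inherited at every $K$-type since the ladder operators and the constants $b_{|k|,\gd}$ in \eqref{v2kDef} are level-independent) to pull out a factor of $1/[\G:\G(q)]$, and cancel it against the factor $[\G:\G(q)]$ coming from integrating the $\G$-invariant integrand over $\G(q)\bk G$ rather than $\G\bk G$. Your explicit remark about the $K$-type bases scaling linearly across levels is just a spelled-out version of what the paper leaves implicit.
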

\pf
Using \eqref{vfQnorm} on each function gives a factor of ${1\over[\G:\G(q)]}$, which 
is cancelled by the fact that the integral over the space $\G(q)\bk G$ is $[\G:\G(q)]$ times larger than that over $\G\bk G$.
\epf

\begin{lem}\label{WqTW1T}
$$
W_{q}(T)
=
{1\over [\G:\G(q)]}
W_{1}(T),
$$
where
$$
W_{1}(T)
:=
\int_{G}f_{T}(g)\<\pi(g).\Psi_{1}\big|_{V},\Psi_{1}\big|_{V}\>_{\G\bk G} dg
.
$$
\end{lem}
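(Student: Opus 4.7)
The plan is to expand both projections $\Psi_q|_V$ and $\Psi_{q,\gamma_0}|_V$ spectrally using the formula in \eqref{PsiqProj}, then substitute the expansions into the matrix coefficient inside $W_q(T)$ and apply Lemmas \ref{Qunfold} and \ref{qMatCoef} to each factor.

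More concretely, I first write
\begin{align*}
\Psi_q|_V \;&=\; \sum_{k\in\Z} \langle \Psi_q, \vf^{(q)}_{2k}\rangle \, \vf^{(q)}_{2k}, \\
\Psi_{q,\gamma_0}|_V \;&=\; \sum_{k'\in\Z} \langle \Psi_{q,\gamma_0}, \vf^{(q)}_{2k'}\rangle \, \vf^{(q)}_{2k'},
\end{align*}
so that
$$
\langle \pi(g).\Psi_q|_V,\Psi_{q,\gamma_0}|_V\rangle_{\Gamma(q)\bk G}
= \sum_{k,k'} \langle \Psi_q, \vf^{(q)}_{2k}\rangle\, \overline{\langle \Psi_{q,\gamma_0}, \vf^{(q)}_{2k'}\rangle}\, \langle \pi(g).\vf^{(q)}_{2k},\vf^{(q)}_{2k'}\rangle_{\Gamma(q)\bk G}.
$$
Lemma \ref{qMatCoef} converts each matrix coefficient on the right-hand side from level $q$ to level $1$ with no extra factors. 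Lemma \ref{Qunfold} applied to each of the two bracket factors introduces two copies of $1/\sqrt{[\Gamma:\Gamma(q)]}$, whose product is precisely $1/[\Gamma:\Gamma(q)]$. Crucially, the $\gamma_0$ that appears in $\Psi_{q,\gamma_0}$ is eliminated by the change of variables $g \mapsto \gamma_0 g$ used in the proof of Lemma \ref{Qunfold}, since $\vf^{(1)}_{2k'}$ is $\Gamma$-invariant on the left; thus $\langle \Psi_{q,\gamma_0}, \vf^{(q)}_{2k'}\rangle = \tfrac{1}{\sqrt{[\Gamma:\Gamma(q)]}}\langle \Psi_1, \vf^{(1)}_{2k'}\rangle$ with the same right-hand side as in the $\gamma_0 = e$ case.

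Combining these substitutions, the double sum reassembles into
$$
\frac{1}{[\Gamma:\Gamma(q)]} \sum_{k,k'} \langle \Psi_1,\vf^{(1)}_{2k}\rangle \, \overline{\langle \Psi_1,\vf^{(1)}_{2k'}\rangle}\, \langle \pi(g).\vf^{(1)}_{2k},\vf^{(1)}_{2k'}\rangle_{\Gamma\bk G} = \frac{1}{[\Gamma:\Gamma(q)]} \langle \pi(g).\Psi_1|_V, \Psi_1|_V\rangle_{\Gamma\bk G},
$$
pointwise in $g\in G$. Integrating against $f_T(g)$ then yields the lemma.

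There is no real obstacle here beyond bookkeeping; the only thing to be careful about is the disappearance of $\gamma_0$, which relies on using the $\Gamma$-invariance (not merely $\Gamma(q)$-invariance) of the level-$1$ eigenfunction $\vf^{(1)}_{2k'}$ inside the unfolding argument of Lemma \ref{Qunfold}. This is exactly the structural reason the statement is independent of $\gamma_0 \in \Gamma$, and matches the uniformity claimed in Theorem \ref{thm:unif}.
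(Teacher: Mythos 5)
Your proof is correct and is essentially the paper's own argument: the paper's proof of this lemma is just the one-line statement that it follows by combining \eqref{PsiqProj}, \eqref{WqTdef}, and Lemmata \ref{Qunfold} and \ref{qMatCoef}, which is precisely the expansion-and-substitution you carry out. Your remark that the $\g_{0}$-dependence disappears via the change of variables $g\mapsto\g_{0}g$ and the $\G$-invariance of $\vf^{(1)}_{2k}$ matches the closing observation in the paper's proof of Lemma \ref{Qunfold}.
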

\pf
This is simply a combination of \eqref{PsiqProj}, \eqref{WqTdef}, and Lemmata  
 \ref{Qunfold}
and
\ref{qMatCoef}.
\epf

In the same way, one proves
\begin{lem}\label{tilWqT}
$$
\tilde W_{q}(T)
=
{1\over [\G:\G(q)]}
\cE_{q'}(T)
,
$$
where
$$
\cE_{q'}(T)
:=
[\G:\G(q')]
\int_{G}
f_{T}(g)
\<\pi(g).
\Psi_{q'}\big|_{\tilde V},
\Psi_{q',\tilde \g_{0}}\big|_{\tilde V}\>_{\G(q')\bk G}
dg.
$$
Here $\tilde \g_{0}$ is a representative for $\g_{0}$ in $\G(q')\bk \G$.
\end{lem}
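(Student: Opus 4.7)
The plan is to mirror the proof of Lemma \ref{WqTW1T} verbatim, except that we unfold from level $q$ down to level $q'$ (rather than all the way to level $1$), exploiting the fact that the newform $\tilde\vf^{(q)}$ is merely lifted from $\G(q')$ and not from $\G$.

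First I would establish the two natural analogs of Lemmata \ref{Qunfold} and \ref{qMatCoef} for the newform pair $\tilde\vf^{(q)}$, $\tilde\vf^{(q')}$. Using the normalization $\tilde\vf^{(q)} = [\G(q'):\G(q)]^{-1/2}\tilde\vf^{(q')}$, the same unfold-refold manoeuvre (insert the definition \eqref{PsiqDef} of $\Psi_q$, unfold the $\G(q)$-sum to an integral over $G$, and refold as a sum over $\G(q')$) yields
\[
\<\Psi_{q},\tilde\vf^{(q)}_{2k}\>_{\G(q)\bk G}
= \frac{1}{\sqrt{[\G(q'):\G(q)]}}\,\<\Psi_{q'},\tilde\vf^{(q')}_{2k}\>_{\G(q')\bk G},
\]
and likewise each matrix coefficient $\<\pi(g)\tilde\vf^{(q)}_{2k},\tilde\vf^{(q)}_{2k'}\>_{\G(q)\bk G}$ equals its level-$q'$ counterpart, since the two factors of $[\G(q'):\G(q)]^{-1/2}$ from the normalizations cancel against the $[\G(q'):\G(q)]$ factor relating the volumes of $\G(q)\bk G$ and $\G(q')\bk G$.

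The one genuinely new issue, and the main obstacle, is the $\g_{0}$-twist appearing in $\Psi_{q,\g_{0}}$. When one repeats the unfold-refold for this factor, the analog of the final step in Lemma \ref{Qunfold}, where $\G$-invariance of $\vf^{(1)}_{2k}$ was used to absorb $\g_{0}$, fails: $\tilde\vf^{(q')}_{2k}$ is only $\G(q')$-invariant. The fix is to write $\g_{0}=\g'\tilde\g_{0}$ with $\g'\in\G(q')$ and $\tilde\g_{0}\in\G(q')\bk\G$; then the extra $\g'$ gets absorbed into the sum defining $\Psi_{q'}$ and one obtains cleanly
\[
\<\Psi_{q,\g_{0}},\tilde\vf^{(q)}_{2k}\>_{\G(q)\bk G}
= \frac{1}{\sqrt{[\G(q'):\G(q)]}}\,\<\Psi_{q',\tilde\g_{0}},\tilde\vf^{(q')}_{2k}\>_{\G(q')\bk G}.
\]

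Plugging the two inner-product identities and the matrix-coefficient identity into the spectral expansion
\[
\<\pi(g)\Psi_{q}\big|_{\tilde V},\Psi_{q,\g_{0}}\big|_{\tilde V}\>_{\G(q)\bk G}
=\sum_{k,k'}\<\Psi_{q},\tilde\vf^{(q)}_{2k}\>\,\overline{\<\Psi_{q,\g_{0}},\tilde\vf^{(q)}_{2k'}\>}\,\<\pi(g)\tilde\vf^{(q)}_{2k},\tilde\vf^{(q)}_{2k'}\>,
\]
produces an overall scalar of $[\G(q'):\G(q)]^{-1}$ multiplying the level-$q'$ expression. Finally, the factorization $[\G:\G(q)]=[\G:\G(q')]\cdot[\G(q'):\G(q)]$ rewrites this scalar as $[\G:\G(q')]/[\G:\G(q)]$, and absorbing the first factor into the definition of $\cE_{q'}(T)$ gives exactly $\tilde W_{q}(T) = \cE_{q'}(T)/[\G:\G(q)]$, as required.
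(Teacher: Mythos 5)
Your proposal is correct and is essentially the paper's own argument: the paper proves Lemma \ref{tilWqT} ``in the same way'' as Lemma \ref{WqTW1T}, i.e.\ by the level-$q'$ analogues of Lemmata \ref{Qunfold} and \ref{qMatCoef} (unfolding only down to $\G(q')$, with the twist absorbed via the representative $\tilde\g_{0}$), and your bookkeeping of the factors $[\G(q'):\G(q)]^{-1/2}$ and the identity $[\G:\G(q)]=[\G:\G(q')][\G(q'):\G(q)]$ matches the stated constant exactly.
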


The term $W_{q}^{\perp}$ is handled using the spectral gap in a similar way as 
\eqref{applyMtxC}. 
%The following lemma follows easily from .
\begin{lem}\label{WqPerpT}
$$
W_{q}^{\perp}(T)
\ll
T^{2\gT}
\eta^{-6}
.
$$
\end{lem}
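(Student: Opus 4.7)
My plan is to model the argument directly after the tempered-bound calculation \eqref{applyMtxC} that appeared in the proof of Theorem \ref{thm:main}, with the decay-of-matrix-coefficients lemma \eqref{decayMtrxgT} replacing \eqref{decayMtrx}. First I would apply the triangle inequality and use $|f_T(g)| \le \mathbf{1}_{\|g\|<T}$ to write
$$
|W_q^\perp(T)| \;\le\; \int_{\|g\|<T} \bigl|\<\pi(g).\Psi_q^\perp, \Psi_{q,\g_0}^\perp\>_{\G(q)\bk G}\bigr|\, dg.
$$
The key structural input is that, by construction, $\Psi_q^\perp$ and $\Psi_{q,\g_0}^\perp$ lie in the orthogonal complement of $V \oplus \tilde V$ inside $\cL^2(\G(q)\bk G)$. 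By the definition of the spectral gap $\gT$ together with the fact that the discrete spectrum of $\G(q)$ below $\gT(1-\gT)$ consists only of the oldform $\vf^{(q)}$ (generating $V$) and the finitely many new eigenfunctions coming from the bad level $q'\mid\fB$ (generating $\tilde V$), the representation of $G$ on this orthogonal complement does not weakly contain any complementary series with parameter $s>\gT$. Hence \eqref{decayMtrxgT} is applicable.

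Next I would switch to Cartan coordinates, writing $dg = dk_1\,\sinh t\,dt\,dk_2$ and using that $\|g\|<T$ corresponds to $t < 2\log T + O(1)$. For each fixed pair $(k_1,k_2)\in K\times K$, the bound \eqref{decayMtrxgT} yields
$$
\bigl|\<\pi(k_1 a_t k_2).\Psi_q^\perp,\Psi_{q,\g_0}^\perp\>\bigr|
\;\ll\;
e^{-(1-\gT)t}\,\bigl(\|\Psi_q^\perp\|\,\|\Psi_{q,\g_0}^\perp\|\bigr)^{1/2}\bigl(\cS\Psi_q^\perp\cdot\cS\Psi_{q,\g_0}^\perp\bigr)^{1/2}.
$$
The $K\times K$ integration contributes a harmless constant, and the remaining $t$-integral is
$$
\int_0^{2\log T} e^{-(1-\gT)t}\,\sinh t\, dt \;\asymp\; \int_0^{2\log T} e^{(2\gT-1)t/\text{(appropriate form)}}\,dt \;\ll\; T^{2\gT},
$$
since $\gT>1/2$ ensures the integral is dominated by its upper endpoint.

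For the Sobolev factors, I would use that orthogonal projection onto a $G$-invariant subspace commutes with $d\pi(X)$ for every $X\in\fg$, so that $\cS\Psi_q^\perp \le \cS\Psi_q$, and similarly for $\Psi_{q,\g_0}^\perp$. Unfolding gives $\|d\pi(X)\Psi_q\|_{\cL^2(\G(q)\bk G)} = \|d\pi(X)\psi\|_{\cL^2(G)}$ (for $\eta$ smaller than the injectivity radius, so that translates of the support of $\psi$ under $\G(q)$ do not overlap), and the corresponding estimate holds for $\Psi_{q,\g_0}$ after the harmless change of variable $g\mapsto\g_0 g$. Since $\psi$ is a smooth unit-mass bump supported in an $\eta$-ball of the three-dimensional group $G$, a direct computation shows that the relevant norm appearing in \eqref{decayMtrxgT} is of size $\eta^{-3}$ for each of $\Psi_q^\perp$ and $\Psi_{q,\g_0}^\perp$, producing the combined factor $\eta^{-6}$ and yielding the claimed bound $W_q^\perp(T)\ll T^{2\gT}\eta^{-6}$.

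The main obstacle in this argument is the Sobolev-norm bookkeeping in the final step: one must verify that the particular combination of $L^2$-norms and derivatives occurring in $(\|v\|\|w\|)^{1/2}(\cS v\cdot\cS w)^{1/2}$, applied to the bump $\psi$ and its projections, produces precisely the exponent $\eta^{-6}$ needed to balance against the $\eta T^{2\gd}$ error from Proposition \ref{propH1q} and to yield the $(6/7,1/7)$ trade-off of Theorem \ref{thm:unif}. The spectral-gap input and the $T^{2\gT}$ integral are both essentially routine; only the precise accounting of $\eta$-powers coming from derivatives of an $\eta$-localized unit-mass bump requires care.
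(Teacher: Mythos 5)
Your argument is correct and is essentially the paper's own proof: apply the spectral-gap decay bound \eqref{decayMtrxgT} to the component orthogonal to $V\oplus\tilde V$, integrate against $\sinh t\,dt$ over $t\le 2\log T$ to get $T^{2\gT}$, and bound the bump-function norms in $\eta$ (the paper records this as $\|\Psi_q\|\ll\eta^{-3/2}$ and $\cS\Psi_q\ll\eta^{-9/2}$, i.e. $\eta^{-3}$ per vector, matching your accounting and giving $\eta^{-6}$). The only cosmetic difference is that you use the decay rate $e^{-(1-\gT)t}$, which is the rate actually consistent with the stated $T^{2\gT}$ conclusion, while \eqref{decayMtrxgT} as printed has $e^{-\gT t}$; either way the claimed upper bound follows.
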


\pf
The bound   \eqref{decayMtrxgT} gives
$$
W_{q}^{\perp}(T)
\ll
T^{2\gT}\
\|\Psi_{q}\|_{2}\
 \cS\Psi_{q}
.
$$
%follows easily from.
We estimate
$$
\|\Psi_{q}\|
\ll
\eta^{-3/2}
,\qquad
\text{ and }
\qquad
\cS\Psi_{q} \ll
\eta^{-9/2}
,
$$
since $\Psi_{q}$ is a bump function in a $3$-dimensional ball of radius $\eta$.

\epf

Putting everything together gives
\begin{prop}
$$
\cN_{q}(T) =
{1\over [\G:\G(q)]}
\bigg(
\cN_{1}(T)
+
\cE_{q'}(T)
\bigg)
+
O\bigg(
T^{2\gT}\eta^{-6}
+\eta(1+\cS _{\infty,T} f)T^{2\gd}
\bigg)
$$
\end{prop}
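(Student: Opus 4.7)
The plan is to assemble the ingredients already developed; no fundamentally new input is required. The skeleton is: start with Proposition \ref{propH1q} to replace $\cN_q(T)$ by $\cH_q(T)$ up to a smoothing error, use the spectral decomposition \eqref{cHqDecomp} to break $\cH_q(T)$ into its three pieces, identify each piece via Lemmata \ref{WqTW1T}, \ref{tilWqT}, and \ref{WqPerpT}, and finally apply the same argument at level $q=1$ to connect $W_1(T)$ to the main term $\cN_1(T)$.

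Concretely, I would first invoke Proposition \ref{propH1q} to write
$$
\cN_q(T) \;=\; \cH_q(T) + O\bigl(\eta(1+\cS_{\infty,T}f)\,T^{2\gd}\bigr).
$$
Substituting \eqref{cHqDecomp} and applying Lemma \ref{WqTW1T} to the $V$-part, Lemma \ref{tilWqT} to the $\tilde V$-part, and Lemma \ref{WqPerpT} to the orthogonal complement, I would obtain
$$
\cN_q(T) \;=\; \frac{1}{[\G:\G(q)]}\bigl(W_1(T)+\cE_{q'}(T)\bigr) + O\bigl(T^{2\gT}\eta^{-6} + \eta(1+\cS_{\infty,T}f)T^{2\gd}\bigr).
$$

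It then remains only to pass from $W_1(T)$ to $\cN_1(T)$. For this I would run the same chain at level $q=1$: under the standing simplifying assumption no bad-level newform contributes (the base eigenfunction $\vf^{(1)}$ is already accounted for by $V$), so $\cH_1(T)=W_1(T)+W_1^\perp(T)$. Combining Proposition \ref{propH1q} and Lemma \ref{WqPerpT} at $q=1$ gives
$$
W_1(T) \;=\; \cN_1(T) + O\bigl(T^{2\gT}\eta^{-6} + \eta(1+\cS_{\infty,T}f)T^{2\gd}\bigr).
$$
Substituting this, and using $[\G:\G(q)]^{-1}\leq 1$ to absorb the overall index factor on the error term, yields the stated identity.

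There is no substantive obstacle; the essential work has already been done in Lemmata \ref{WqTW1T}--\ref{WqPerpT}, and this Proposition is their bookkeeping assembly. The only point requiring care is to remember the level-$q=1$ step, so that the right-hand side features the genuine counting quantity $\cN_1(T)$ rather than the opaque spectral expression $W_1(T)$. The general case, with several discrete eigenvalues below $\gT(1-\gT)$ or several newforms from the finitely many divisors $q'\mid\fB$, is handled identically by summing the finitely many additional $\tilde W$-contributions into $\cE_{q'}(T)$.
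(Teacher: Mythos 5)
Your proposal is correct and follows essentially the same route as the paper: combine Proposition \eqref{propH1q}, the decomposition \eqref{cHqDecomp}, and Lemmata \ref{WqTW1T}, \ref{tilWqT}, \ref{WqPerpT} at level $q$, then run the identical argument at level $1$ to trade $W_{1}(T)$ for $\cN_{1}(T)$, absorbing the index factor into the error term. No further comment is needed.
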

\pf
Combining \eqref{propH1q}, \eqref{cHqDecomp}, and Lemmata
\ref{WqTW1T}, \ref{tilWqT},  and \ref{WqPerpT}, gives
$$
\cN_{q}(T) =
{1\over [\G:\G(q)]}
\bigg(
W_{1}(T)
+
\cE_{q'}(T)
\bigg)
+
O\bigg(
T^{2\gT}\eta^{-6}
+\eta(1+\cS _{\infty,T} f)T^{2\gd}
\bigg)
.
$$
On the other hand, the same argument gives
$$
\cN_{1}(T) =
W_{1}(T)
+
O\bigg(
T^{2\gT}\eta^{-6}
+\eta(1+\cS _{\infty,T} f)T^{2\gd}
\bigg)
.
$$
%Putting 
Combining
the two %together 
completes the proof.
\epf

The optimal choice of 
$$
\eta
=
(1+\cS _{\infty,T} f)^{-1/7}T^{-2(\gd-\gT)/7}
$$
gives the error term claimed in Theorem \ref{thm:unif}.

\

\

\section{Proof of Theorem \ref{thm:LowBnd}}\label{sec:LowBnd}

Assume $\G<\SL(2,\Z)$ has 
critical exponent
%no parabolics and
 $\gd>1/2$.
Recall that $N$ is a parameter going to infinity, $T$ and $K$ are small positive powers of $N$, $v,w\in\Z^{2}$, $n\in\Z$, $\frac NK<|n|<N$,
$|w|< {N\over T}$, $|v|\le 1$, and
$
|n|<|v||w|T.
$
%$|v|\asymp 1$, $|w|\asymp {N\over T}$, and $n\in\Z$, $|n|\asymp N$. 
We wish to give a lower bound for the number of $\g\in\G$, $\|\g\|<T$ such that
$$
\big|
\<v
\g,w\>
-n
\big|
<
{N\over K}
.
$$

Decompose $\g$ in $KA^{+}K$ coordinates, 
\beann
\g
&=&
k_{u}a_{\rho}k_{v}
=
\mattwo{\cos u}{\sin u}{-\sin u}{\cos u}
\mattwo{\rho}{}{}{1/\rho}
\mattwo{\cos v}{\sin v}{-\sin v}{\cos v}
\\
&=&
\left(
\begin{array}{cc}
 \sqrt{ } \cos (u) \cos (v)-\frac{\sin (u) \sin
   (v)}{{\rho }} & \frac{\cos (v) \sin
   (u)}{{\rho }}+{\rho } \cos (u) \sin
   (v) \\
 -{\rho } \cos (v) \sin (u)-\frac{\cos (u)
   \sin (v)}{{\rho }} & \frac{\cos (u) \cos
   (v)}{{\rho }}-{\rho } \sin (u) \sin
   (v)
\end{array}
\right)
.
\eeann 
As $1<\rho\approx\|\g\|$, %<N^{\gs}$, 
we have
$$
%\frac1\rho+\rho\approx
\rho< T% N^{\gs}
.
$$
%since $1<\rho\to\infty$.

Let $v=(a,b)$ and $w=(c,d)$. % (actually $v_{0}=(0,1)$).
The condition 
$$
|\<v\g,w\> - n| <{N\over K}
$$
becomes in $(u,\rho,v)$ coordinates
\beann
|\<v\g,w\> - n
|%^{2}
&=&
\left|
(a,b)k_{u}a_{\rho}k_{v} \,^{t}(c,d) - n
\right|%^{2}
\\
&=&
\Bigg|
\bigg(
\rho
(a\cos u - b \sin u)
,
\frac1{\rho}
(a\sin u + b \cos u)
\bigg)
\\
&&
\quad
\cdot
\bigg(
c \cos v+ d \sin v,
-c \sin v+d \cos v
\bigg)
 - n
\Bigg|%^{2}
\\
&=&
\Bigg|
\rho
(a\cos u - b \sin u)
(
c \cos v+ d \sin v
)
\\
&&
+
\frac1{\rho}
(a\sin u + b \cos u)
(
-c \sin v+d \cos v
)
 - n
\Bigg|%^{2}
\\
&\approx&
\Bigg|
\rho
(a\cos u - b \sin u)
(
c \cos v+ d \sin v
)
 - n
\Bigg|%^{2}
\\
&<&
{N\over K}
.
\eeann

Let $\frak u$ be the angle between the vectors $(a,b)$ and $(\cos u, -\sin u)$. 
Similarly, let $\frak v$ be the angle between $(c,d)$ and $(\cos v, \sin v)$. Then the 
above becomes
$$
\bigg|
\rho
|v|
|w|
\cos\frak u
\cos\frak v
 - n
\bigg|%^{2}
<
{N\over K}
,
$$
or
$$
\bigg|
\frac\rho T
\cos\frak u
\cos\frak v
 - 
 {n
 \over
|v|
|w|
 T}
\bigg|%^{2}
<
{N\over |v|
|w|
 T K}
.
$$

%Recall that $|v|\asymp 1$,  $|w|\asymp {N\over T}$, and $|n|\asymp N$.
%This gives
%$$
%\bigg|
%{\rho\over T}
%\cos\frak u
%\cos\frak v
% - \cA
%\bigg|%^{2}
%\ll
%\frac1K
%,
%$$
%where
Set
$$
\cA:= {n\over T|v||w|},\qquad\text{ and }\qquad%\asymp 1.
\cB:= {N\over T|v||w|}
.
$$
Both $\cos \frak u$ and $\cos\frak v$ 
can range in intervals independent of $K$, and hence so do $u$ and $v$. 
By an obvious approximation argument, divide 
 these intervals into sectors $u\in\Psi_{\ga}$ and  $v\in\Phi_{\gb}$. 
An application of Theorem \ref{thm:main} gives (using a smooth function to capture the lower bound)
\beann
&&\sum_{\g\in\G\atop  T(\cA-{\cB\over K})<\|\g\|< T(\cA+{\cB\over K})}
\bo\{u\in\Psi_{\ga}\}\bo\{v\in\Phi_{\gb}\} \\
&&
\qquad\qquad
\gg {1\over K}\left( \mu(\Psi_{\ga})\mu(\Phi_{\gb})c_{0}T^{2\gd} + \sum_{j}c_{j}T^{2s_{j}}\right)
+ O\left(T^{\frac 34 + 2\gd \frac 14}(\log T)^{1/4}\right)
.
\eeann

As $\Psi:=\cup\Psi_{\ga}$ and $\Phi:=\cup\Phi_{\gb}$ are intervals independent of $K$, we have $\mu(\Psi)\gg 1$ and same with $\mu(\Phi)$. This completes the proof.

\

\

\section{Proof of Theorem \ref{thm:UpBnd}}\label{sec:UpBnd}

Recall that we 
wish
%are trying 
to give an upper bound for the number of $\g\in\G$,
$\|\g\|< T$, with
$$
\big|
(c,d)\g- y
\big|
<
\frac NK
$$
and
$$
(c,d)\g\equiv y(\mod q).
$$
Here 
$y=(y_{1},y_{2})\in\Z^{2}$
 with $|y|< N$, $|(c,d)|<\frac NT$ and 
 $|y|<T|(c,d)|$.

Let $\G_{0}(q)$ be the subgroup of $\G$ (of level $q$) which stabilizes $(c,d)$ modulo $q$, that is $\g_{0}\in\G_{0}(q)$ iff $(c,d)\g_{0}\equiv (c,d)(q)$. Then we decompose $\g\in\G$ as $\g=\g_{0}\g_{1}$ with $\g_{0}\in\G_{0}(q)$ and $\g_{1}\in\G_{0}(q)\bk \G$. The count becomes
\bea
\nonumber
\hskip-.6in
&&
\sum_{\g_{1}\in\G_{0}(q)\bk \G}
\bo\{(c,d)\g_{1}\equiv y(q)\}
\sum_{\g\in\G_{0}(q)\cdot\g_{1}\atop\|\g\|<T}
\bo\big\{\big|
(c,d)\g- y
\big|
<
\frac NK
\big\}
\\
\label{modComp}
\hskip-.6in
&&
\ll
%\left(
{1\over [\G:\G_{0}(q)]}
\sum_{\g\in \G\atop\|\g\|<T}
\bo\big\{\big|
(c,d)\g- y
\big|
<
\frac NK
\big\}
+
O\left(T^{\frac67 2\gd+\frac17 2\gT}\right)
%\right)
,
\eea
where we used Theorem \ref{thm:unif} on the inner sum, and estimated
$$
\sum_{\g_{1}\in\G_{0}(q)\bk \G}
\bo\{(c,d)\g_{1}\equiv y(q)\}
\ll
1
.
$$
%(since the group has size $[\G:\G(q)]\gg q^{2}$ and the modular condition
It remains to analyze
$$
\cN :=
\sum_{\g\in\G\atop \|\g\|<T}
\bo
\left\{|(c,d)\g-y|<\frac {N}K\right\}
.
$$

Writing $\g$ in $KA^{+}K$ coordinates, we have $\g=k_{u}a_{\rho}k_{v}$ with $1<\rho
\approx\|\g\|<T$.
The condition 
$
|(c,d)\cdot \g -y|<\frac N K
$
becomes 
\bea
\nonumber
\left(\frac N{K}\right)^{2}
&>&
|(c,d)\cdot \g -y|^{2}
\\
\nonumber
&=&
\bigg(
\rho
\cos v(c \cos u-d\sin u) - \frac1{\rho}\sin v( c\sin u + d \cos u)
-y_{1}
\bigg)^{2}
\\
\nonumber
&&
+
\bigg(
 \rho
\sin v(c \cos u-d\sin u)+ 
\frac1{\rho}\cos v( c\sin u + d \cos u)
-y_{2}
\bigg)
^{2}
\\
\nonumber
&\approx&
%{
\bigg(
\rho
\cos v(c \cos u-d\sin u) 
%- \sin v\frac1{\sqrt\rho}( c\sin u + d \cos u)
-y_{1}
\bigg)^{2}
%\\
%&&
+
\bigg(
\rho
\sin v (c \cos u-d\sin u)
%+ \cos v\frac1{\sqrt\rho}( c\sin u + d \cos u)
-y_{2}
\bigg)
^{2}
%}
\\
\label{breakSqs}
&=&
\bigg(
\rho
|(c,d)|
\cos\frak u
-
|y|
\cos\frak v
\bigg)^{2}
+
|y|^{2}
(
1
-
\cos^{2}\frak v
)
,
\eea
after a calculation.
In the above,  we set
$$
c\cos u -d\sin u=(c,d)\cdot (\cos u,-\sin u) = |(c,d)|\cos\frak u
,
$$ 
where $\frak u$ is the angle between the vectors $(c,d)$ and $(\cos u,-\sin u)$,
and
$$
y_{1}\cos v+y_{2} \sin v=(y_{1},y_{2})\cdot (\cos v,\sin v) = |y|\cos \frak v
,
$$ 
where $\frak v$ is the angle between those two vectors.

By positivity, we break \eqref{breakSqs} into two pieces.
%Using $|y|\asymp N$, t
The piece
$$
\left(\frac N{K}\right)^{2} > |y|^{2}(1-\cos^{2}\frak v)
$$
requires
 $|\frak v|\ll {N\over |y| K}$. This forces $v$ to be contained in an interval, say $
\Phi$, of length $\ll {N\over |y| K}$.

The second piece simplifies to
$$
\bigg|
\frac\rho T
\cos\frak u
-
\cA
\bigg|
\ll
{
1
\over
K}
,
$$
where
$$
\cA
:=
{|y|\over T |(c,d)|
}
\cos\frak v
.
%\asymp 1
$$
%With $\frak v$ small as above we have $|y|\cos\frak v\asymp |y|\asymp N^{1/2}$. 
%since $|(c,d)|\asymp \frac NT$. 

As $\rho<T$, $\frak u$ ranges in a constant interval, say $\Psi$. We break into sectors as before and bound using Theorem \ref{thm:main}:
\beann
\cN
&\ll&\sum_{\g\in\G\atop \cA T(1-{c_{1}\over K})<\|\g\|<\cA T(1+{c_{1}\over K})}
\bo\{u\in\Psi\}\bo\{v\in\Phi\} \\
&\ll&
{1\over K} \mu(\Psi)\mu(\Phi) T^{2\gd} 
.
\eeann

 Since $|\Phi|\ll{1\over K}$, we have $\mu(\Phi)\ll \frac1{K^
{\gd}}$. Inserting the above into \eqref{modComp} and using $[\G:\G(q)]\gg q^{2}$ completes the proof.

\bibliographystyle{alpha}

\bibliography{../../AKbibliog}

\end{document}